\def\N {\mathbb{N}}
\def\Z {\mathbb{Z}}
\def\R {\mathbb{R}}
\def\C {\mathbb{C}}
\def\id{\mathrm{id}}
\def\trivlin{\mathbf{I}}
\def\quand{\quad\text{ and }\quad}
\def\quomma{\quad\text{, }\quad}
\def\ev{\mathrm{ev}}
\def\act#1#2{#1/\!\!/#2}
\def\hc#1{\mathrm{h}_{#1}}
\def\subset{\subseteq}
\def\nobr{~\hspace{-0.26em}}
\def\maps{\nobr:\nobr}
\def\eq{\nobr = \nobr}
\def\pr{{\mathsf{pr}}}
\let\Oldin\in\renewcommand{\in}{\nobr\Oldin\nobr}
\let\Oldtimes\times\renewcommand{\times}{\nobr\Oldtimes}
\let\Oldotimes\otimes\renewcommand{\otimes}{\nobr\Oldotimes}
\newlength{\widthtmp}
\def\length#1{\settowidth{\widthtmp}{#1}\the\widthtmp}
\def\lli#1{\,_{#1}\!}
\renewcommand{\varepsilon}{\epsilon}
\def\bigset#1#2{\left\lbrace\;\begin{minipage}[c]{#1}\begin{center}#2\end{center}\end{minipage}\;\right\rbrace}
\def\erf#1{(\ref{#1})}
\newlength{\myl}
\newcommand{\ueins}{{\mathrm{U}}(1)}
\newcommand{\spin}[1]{{\mathrm{Spin}}\brackets{#1}}
\newcommand{\spinc}[1]{{{\mathrm{Spin}}^{{\C}}}\brackets{#1}}
\newcommand{\so}[1]{{\mathrm{SO}}\brackets{#1}}
\def\diff{\mathcal{D}\!i\!f\!\!f}
\def\hom{\mathcal{H}\!om}
\def\homcon{\mathcal{H}\!om^{\!\nabla}\!}
\def\brackets#1{\IfStrEq{#1}{-}{}{(#1)}}
\def\buntech#1#2{\mathcal{B}\hspace{-0.01em}un_{\hspace{-0.1em}#1}^{#2}}
\def\bun#1#2{\buntech{#1}{}\brackets{#2}}
\def\buncon#1#2{\buntech{#1}{\nabla}\hspace{-0.05em}\brackets{#2}}
\def\grbtech#1{\mathcal{G}\hspace{-0.06em}r\hspace{-0.06em}b_{\hspace{-0.07em}#1}}
\def\grbcon#1#2{\grbtech{#1}^{\nabla\!}\brackets{#2}}
\newcommand{\alxydim}[2]{\begin{aligned}\xymatrix#1{#2}\end{aligned}}
\renewcommand{\to}{\nobr\!\xymatrix@R=0cm@C=1.4em{\ar[r] &}\nobr}
\renewcommand{\mapsto}{\!\xymatrix@R=0cm@C=1.4em{\ar@{|->}[r] &}\!}
\renewcommand{\Rightarrow}{\!\xymatrix@R=0cm@C=1.4em{\ar@{=>}[r] &}\!}
\renewcommand{\Leftarrow}{\!\xymatrix@R=0cm@C=1.4em{\ar@{<=}[r] &}\!}
\newcommand{\incl}{\!\xymatrix@R=0cm@C=1.4em{\ar@{^(->}[r] &}\!}
\renewcommand\Leftrightarrow{\!\xymatrix@R=0cm@C=1.4em{\ar@{<=>}[r] &}\!}
\newcounter{denseversion}
\newcounter{authorcounter}
\newcounter{adresscounter}
\def\title#1{\gdef\@title{#1}}
\def\@title{}
\def\subtitle#1{\gdef\@subtitle{#1}}
\def\@subtitle{}
\def\authortagsused{0}
\def\adresstag#1{\if!#1!\else$^{\;#1\;}$\fi}
\renewcommand{\author}[2][]{
  \stepcounter{authorcounter}
  \if!#1!\else\gdef\authortagsused{1}\fi
  \ifnum\value{authorcounter}=1
    \def\@authorstringa{#2\adresstag{#1}}
    \def\@authorstringb{#2}
    \def\@authorstringc{#2\adresstag{#1}}
  \else
    \g@addto@macro\@authorstringa{\ and #2\adresstag{#1}}
    \g@addto@macro\@authorstringb{\ and #2}
    \g@addto@macro\@authorstringc{\\#2\adresstag{#1}}
  \fi}
\def\@author{\ifnum\value{denseversion}=0\@authorstringa\else\@authorstringb\fi}
\def\@adressstringa{}
\def\@adressstringb{}
\newcommand{\adress}[2][]{
  \stepcounter{adresscounter}
  \ifnum\value{adresscounter}=1
    \g@addto@macro\@adressstringa{\ifnum\authortagsused=0\def\br{\\}\else\def\br{, }\fi\adresstag{#1}#2}
    \g@addto@macro\@adressstringb{\def\br{\\}\adresstag{#1}\parbox[t]{14cm}{#2}}
  \else
    \g@addto@macro\@adressstringa{\\[\bigskipamount]\adresstag{#1}#2}
    \g@addto@macro\@adressstringb{\\[\medskipamount]\adresstag{#1}\parbox[t]{14cm}{#2}}
  \fi}
\def\@adress{\ifnum\value{denseversion}=0\@adressstringa\else\@adressstringb\fi}
\def\preprint#1{\gdef\@preprint{#1}}
\def\@preprint{}
\def\keywords#1{\gdef\@keywords{#1}}
\def\@keywords{}
\def\msc#1{\gdef\@msc{#1}}
\def\@msc{}
\def\email#1{
   \gdef\@email{#1}
   \g@addto@macro\@authorstringc{ {\it (#1)}}}
\def\@email{}
\def\dedication#1{\gdef\@dedication{#1}}
\def\@dedication{}
\def\mybaselinestretch#1{\gdef\@mybaselinestretch{#1}}
\def\@mybaselinestretch{}
\def\refname{References}
\renewcommand{\baselinestretch}{\@mybaselinestretch}
\def\denseversion{
  \setcounter{denseversion}{1}
  \newgeometry{left=2cm,right=2cm,top=2cm}
  \mybaselinestretch{1.1}
  \renewcommand{\baselinestretch}{\@mybaselinestretch}
  \normalfont
  \fancyfoot[C]{\itshape{\hspace{2.5cm}--$\,\,$\thepage$\,\,$--}}}
\newlength{\myparskip}
\newlength{\myproofparskip}
\renewcommand{\emph}[1]{\def\reserved@a{it}\ifx\f@shape\reserved@a\uline{#1}\else\textit{#1}\fi}
\newcommand{\mytableofcontents}{
   \ifnum\value{denseversion}=0
     \tableofcontents
   \else
     \renewcommand{\baselinestretch}{0.8}
     \normalfont
     \tableofcontents
     \renewcommand{\baselinestretch}{\@mybaselinestretch}
     \normalfont
   \fi}
\newlength{\zeilenlaenge}
\def\putindent#1{
  \settowidth{\zeilenlaenge}{#1}
  \ifnum\zeilenlaenge>\textwidth
    #1
  \else
    \noindent #1
  \fi
}
\def\href#1#2{#2}
\def\kohyp{
  \usepackage{hyperref}
  \hypersetup{
    linktocpage = true,
    pdftitle = {\@title},
    pdfauthor = {\@author},
    pdfkeywords = {\@keywords},    
    bookmarksopen = true,
    bookmarksopenlevel = 1
  }}  
\def\showkeywords{\begin{flushleft}\footnotesize\textbf{Keywords}: \@keywords\end{flushleft}}
\def\showmsc{\begin{flushleft}\footnotesize\textbf{MSC 2010}: \@msc\end{flushleft}}
\newcounter{mythm}[subsection]
\newcounter{mainthm}
\def\setsecnumdepth#1{
  \setcounter{secnumdepth}{#1}
  \setcounter{mythm}{0}
  \ifnum \c@secnumdepth >0
    \ifnum \c@secnumdepth >1
      \def\themythm{\thesubsection.\arabic{mythm}}
      \numberwithin{equation}{subsection}
      \renewcommand\theequation{\thesubsection.\arabic{equation}}
    \else
      \def\themythm{\thesection.\arabic{mythm}}
      \numberwithin{equation}{section}
      \renewcommand\theequation{\thesection.\arabic{equation}}
    \fi
  \else
    \def\themythm{\arabic{mythm}}
  \fi}
\newenvironment{mythmenv}{\strut\ \setlength{\parskip}{\myproofparskip}}{\setlength{\parskip}{\myparskip}}
\newlength{\mythmskip}
\newlength{\mythmtopskip}
\newtheoremstyle{mythmstylea}{\mythmtopskip}{\mythmskip}{\it}{}{\bf}{.}{0em}{}
\newtheoremstyle{mythmstyleb}{\mythmtopskip}{\mythmskip}{}{}{\bf}{.}{0em}{}
\theoremstyle{mythmstylea}
\newtheorem{mytheorem}[mythm]{Theorem}
\newtheorem{mydefinition}[mythm]{Definition}
\newtheorem{mycorollary}[mythm]{Corollary}
\newtheorem{myproposition}[mythm]{Proposition}
\newtheorem{mylemma}[mythm]{Lemma}
\newtheorem{mymaintheorem}[mainthm]{Theorem}
\newtheorem{mymaincorollary}[mainthm]{Corollary}
\newtheorem{mymainproposition}[mainthm]{Proposition}
\newtheorem{mymaindefinition}[mainthm]{Definition}
\theoremstyle{mythmstyleb}
\newtheorem{myremark}[mythm]{Remark}
\newtheorem{myexample}[mythm]{Example}
\newtheorem{myexercise}[mythm]{Exercise}
\newenvironment{definition}[1][]{\begin{mydefinition}[#1]\begin{mythmenv}}{\end{mythmenv}\end{mydefinition}}
\newenvironment{proposition}[1][]{\begin{myproposition}[#1]\begin{mythmenv}}{\end{mythmenv}\end{myproposition}}
\newenvironment{lemma}[1][]{\begin{mylemma}[#1]\begin{mythmenv}}{\end{mythmenv}\end{mylemma}}
\newenvironment{remark}[1][]{\begin{myremark}[#1]\begin{mythmenv}}{\end{mythmenv}\end{myremark}}
\newenvironment{maintheorem}[1]{\begin{mymaintheorem}\begin{mythmenv}}{\end{mythmenv}\end{mymaintheorem}}
\newenvironment{maindefinition}[1]{\begin{mymaindefinition}\begin{mythmenv}}{\end{mythmenv}\end{mymaindefinition}}
\newenvironment{maincorollary}[1]{\begin{mymaincorollary}\begin{mythmenv}}{\end{mythmenv}\end{mymaincorollary}}
\renewenvironment{proof}[1][Proof]{\noindent #1. \begin{mythmenv}}{\hfill$\square$\end{mythmenv}\medskip}
\def\tocsection#1{\section*{#1}\addcontentsline{toc}{section}{#1}}
\def\mytitle{}
\def\zmptitle{
  \begin{tabular}{cc}
    \begin{minipage}[c]{0.4\textwidth}
      \begin{flushleft}
        \includegraphics[width=110pt]{../../tex/zmp}
      \end{flushleft}  
    \end{minipage}&
    \begin{minipage}[c]{0.55\textwidth}
      \begin{flushright}
      {\small\sf\@preprint}
      \end{flushright}
    \end{minipage}
  \end{tabular}
  \vskip 2cm}
\def\maketitle{
  \setlength{\parskip}{\myparskip}  
  \newpage
  \noindent
  \mytitle
  \begin{center}
    \LARGE\@title\\
    \if!\@subtitle!\else \smallskip\LARGE\@subtitle\\\fi
    \bigskip
    \if!\@author!\else\bigskip\large\@author\\\fi
    \ifnum\value{denseversion}=0
      \if!\@adress!\else     \bigskip\normalsize\@adress\\\fi
      \if!\@email!\else\ifnum\value{authorcounter}=1\bigskip\normalsize\textit{\@email}\\\else\fi\fi
    \else
    \fi
    \if!\@dedication!\else \bigskip\normalsize{\@dedication}\\\fi
  \end{center}
  \ifnum\value{denseversion}=0\vskip 1.5cm\else\vskip0.5cm\fi
  \thispagestyle{empty}}
\def\kobiburl#1{
   \IfBeginWith
     {#1}
     {http://arxiv.org/abs/}
     {\kobibarxiv{#1}}
     {\kobiblink{#1}}}
\def\kobibarxiv#1{\href{#1}{\texttt{[arxiv:\StrGobbleLeft{#1}{21}]}}}
\def\kobiblink#1{Available as: \href{#1}{\texttt{\StrSubstitute{#1}{_}{\underline{\;\;}}}}}
\newcommand{\etalchar}[1]{$^{#1}$}
\def\kobib#1{
  \begin{raggedright}
  \ifnum\value{denseversion}=0\else\small\fi

  \end{raggedright}
  \ifnum\value{denseversion}=0\else
      \noindent
      \if!\@authorstringc!\else
        \ifnum\authortagsused=0\ifnum\value{authorcounter}>1\normalsize\@authorstringc\\[\medskipamount]\else\fi\else\normalsize\@authorstringc\\[\medskipamount]\fi       \fi
      \if!\@adress!\else\normalsize\@adress\\\fi
      \ifnum\authortagsused=0\ifnum\value{authorcounter}=1\if!\@email!\else\linebreak\normalsize\textit{\@email}\\\fi\else\fi\else\fi
  \fi}
\newenvironment{commentfigure}{}
\newenvironment{sidewayscommentfigure}{\begin{minipage}}{\end{minipage}}
\def\showcomments{ -- Comments suppressed}
\newif\if@fewtab\@fewtabtrue{
  \count255=\time\divide\count255 by 60
  \xdef\hourmin{\number\count255}
  \multiply\count255 by-60\advance\count255 by\time
  \xdef\hourmin{\hourmin:\ifnum\count255<10 0\fi\the\count255}}
\def\ps@draft{
  \let\@mkboth\@gobbletwo
  \def\@oddfoot{
    \hbox to 7 cm{\tiny \versionno\hfil}
    \hskip -7cm\hfil\rm\thepage\hfil{\tiny\draftdate}}
  \def\@oddhead{}
  \def\@evenhead{}
  \let\@evenfoot\@oddfoot}
\def\draftdate{\number\month/\number\day/\number\year\ \ \ \hourmin }
\newcommand\version[1]{
  \typeout{}\typeout{#1}\typeout{}
  \vskip-1.7cm \centerline{\fbox{{\normalsize\tt DRAFT -- #1 -- 
  \draftdate\showcomments}}} \vskip0.92cm}
\def\draft#1{
  \def\versionno{#1}
  \pagestyle{draft}\thispagestyle{draft}
  \gdef\@ntitle{\version\versionno \@title}
  \global\def\draftcontrol{1}}
\global\def\draftcontrol{0}
\def\quot#1{``#1''}
\def\exd#1{{#1^{\vee}}}
\def\sufi{th}
\def\suficonn{\sufi^{\nabla}}
\def\thinpairs#1{#1^2_{\text{\tiny \it thin}}}
\def\thingrpd#1#2{\mathfrak#1#2}
\def\px#1#2{P_{\!#2}#1}
\def\p{P}
\def\ptx#1#2{\mathcal{P}_{\!#2}#1}
\def\ev{\mathrm{ev}}
\def\pt#1{\mathcal{P}#1}
\def\hc#1{\mathrm{h}_{#1}}
\def\pcomp{\nobr\star\nobr}
\def\prev#1{\overline{#1}}
\def\un{\mathscr{R}}
\def\uncon{\mathscr{R}^{\nabla}}
\def\tr{\mathscr{T}}
\def\trcon{\mathscr{T}^{\nabla}}
\def\fus#1#2{\mathcal{F}\!us(#2,#1)}
\def\fuslc#1#2{\mathcal{F}\!us_{\mathrm{lc}}(#2,#1)}
\def\fushom#1#2{h\!\fus {#1} {#2}}
\def\fusbun#1#2{\mathcal{F}\!us\buntech#1{}(#2)}
\def\fusbunconsf#1#2{\mathcal{F}\!us\bunconsf{#1}{#2}}
\def\fusbuncon#1#2{\mathcal{F}\!us\buncon{#1}{#2}}
\def\fusbunth#1#2{\mathcal{F}\!us\bunth{#1}{#2}}
\def\Omegafus{\Omega_{\!f\!u\!s}}
\def\bunconsf#1#2{\buntech{#1}{\,\nabla}{}^{_{\!\!s\!f}}\hspace{-0.15em}\brackets{#2}}
\def\bunth#1#2{\buntech{#1}{\!\!\!\;\;t\hspace{-0.03em}h}\hspace{-0.05em}\brackets{#2}}
\def\diffbun#1#2{\diff\hspace{-0.1em}\bun{#1}{#2}}
\def\diffgrb#1#2{\diff\hspace{-0.1em}\grbtech#1(#2)}
\def\diffgrbcon#1#2{\diff\hspace{-0.1em}\grbcon{#1}{#2}}
\def\ptr#1{\tau_{#1}}
\def\struc#1#2{#1\text{-}\mathcal{L}\!i\!f\!t(#2)}
\def\struccon#1#2#3{#1\text{-}\mathcal{L}\!i\!f\!t^{\nabla\!}_{#2}(#3)}
\title{Transgression to Loop Spaces and its Inverse, III}
\author{Konrad Waldorf}
\email{konrad.waldorf@mathematik.uni-regensburg.de}
\keywords{gerbes, diffeological spaces, holonomy, loop space, orientability of loop spaces, complex spin structures, transgression}
\let\Oldbibitem\bibitem
\def\bibitem[#1]#2{
  \def\standardlabel{#1}
  \IfStrEq{\standardlabel}{Wala}{\def\standardlabel{Part I}}{}
  \IfStrEq{\standardlabel}{Walb}{\def\standardlabel{Part II}}{}
  \Oldbibitem[\standardlabel]{#2}}
\begin{document}


\maketitle 

\begin{abstract}
We show that the category of abelian gerbes over a smooth manifold is equivalent to a  certain category of principal bundles over the free loop space. These principal bundles are equipped with fusion products and are  equivariant with respect to thin homotopies between loops. The equivalence is established by a functor called regression, and complements a similar equivalence for bundles and gerbes equipped with connections, derived previously in Part II of this series of papers. The two equivalences provide a complete  loop space formulation of the geometry of gerbes;   functorial, monoidal, natural in the base manifold, and consistent with passing from the setting \quot{with connections} to the one \quot{without connections}. We discuss an application to lifting problems, which provides in particular  loop space formulations of spin structures, complex spin structures, and spin connections.
\showkeywords
\showmsc
\end{abstract}

\newpage

\mytableofcontents

\def\noco#1{}

\section{Summary}

\label{sec:introduction}

This is the third and last part of a series of papers \cite{waldorf9,waldorf10} providing a complete formulation of the geometry of  abelian bundles and gerbes over a smooth manifold $M$ in terms of its free loop space $LM$. In this section we  give an overview about the contents and the results of these three papers. In Section \ref{sec:results} we give a more focused summary of the present Part III. This paper is written in a self-contained way -- contents taken from Parts I or II are either reviewed or explicitly referenced. For the convenience of the reader, we have included a table with the  notations of all three papers at the end of this paper.

\cite{waldorf9} is concerned with bundles over $M$. The bundles we consider are principal $A$-bundles over $M$, for $A$ an abelian Lie group, possibly discrete, possibly non-compact. 
The main result of Part I is a loop space formulation for the geometry of these bundles. It can be summarized in the  diagram 
\begin{equation}
\label{diag1}
\alxydim{@C=2cm@R=1.5cm}{\hc 0 \buncon A M  \ar[d]  \ar@<0.4em>[r]^-{\trcon} &  \fus A {\mathcal{L}M} \ar@<0.4em>[l]^{\uncon} \ar[d] \\ \hc 0 \bun AM \ar@<0.4em>[r]^{\tr} & h\fus A{\mathcal{L}M}\text{.} \ar@<0.4em>[l]^{\un}}
\end{equation}
In the left column, we have the categories $\buncon A M$ and $\bun AM$ of principal $A$-bundles over $M$ with and without connections, respectively,  the symbol $\hc 0$ denotes the operation of taking sets of isomorphism classes of objects, and the vertical arrow is the operation of forgetting  connections. The right column contains the corresponding loop space formulations: we have a set $\fus A {\mathcal{L}M}$ consisting of \emph{fusion maps} on the \emph{thin loop space} $\mathcal{L}M$ of $M$. Basically, this is a smooth map $f: LM \to A$ that is constant on thin homotopy classes of loops, and satisfies
\begin{equation}
\label{cond1}
f(\prev{\gamma_3} \pcomp \gamma_1) = f(\prev{\gamma_3} \pcomp \gamma_2) \cdot f(\prev{\gamma_2} \pcomp \gamma_1)
\end{equation}
whenever $\gamma_1,\gamma_2,\gamma_3$ is a triple of paths in $M$ with a common initial point and a common end point \cite[Definition 2.2.3]{waldorf9}. The set $h\fus A {\mathcal{L}M}$ consists of \emph{homotopy classes} of fusion maps, and the vertical arrow denotes the projection of a fusion map to its homotopy class.

The maps $\trcon$ and $\tr$ in diagram \erf{diag1} are called \emph{transgression}; they basically take the holonomy of a connection. The maps $\uncon$ and $\un$ in the opposite direction are called \emph{regression}; they  construct a principal bundle from a fusion map. The statement of Theorems A and B of \cite{waldorf9} is that the pairs $(\trcon,\uncon)$ and $(\tr,\un)$ form bijections. Thus, the sets $\fus A {\mathcal{L}M}$ and $h\fus A {\mathcal{L}M}$ are our loop space formulations of principal $A$-bundles with and without connection, respectively.
Theorem C of \cite{waldorf9} states  the commutativity of the diagram; it expresses the fact that these loop space formulations are compatible with going from a setup \emph{with} connections to one \emph{without} connections. Moreover, all arrows in diagram \erf{diag1} respect the group structures: in the left column the one induced by the monoidal structures on the categories  $\buncon A M$ and $\bun AM$, and in the right column the one given by point-wise multiplication of fusion maps. Finally, all sets and arrows in diagram \erf{diag1} are natural with respect to the manifold $M$.

The content of \cite{waldorf10} and the present Part III is a generalization of  diagram \erf{diag1} from bundles to gerbes. The gerbes we consider are \emph{diffeological $A$-bundle gerbes}, whose structure group $A$ is -- as before -- any abelian Lie group. The main results of Parts II and III can be summarized in the diagram
\begin{equation}
\label{diag2}
\alxydim{@C=2cm@R=1.5cm}{\hc 1\diffgrbcon A M \ar[d]\ar@<0.4em>[r]^{\trcon} &    \fusbunconsf A{LM} \ar@<0.4em>[l]^{\uncon_x}  \ar[d]^{\sufi} \\ \hc 1\diffgrb A M & \ar[l]^-{h\un_x}  h\fusbunth A {LM} }
\end{equation}
of categories and functors.
In the left column we have the 2-categories $\diffgrb AM$ and $\diffgrbcon AM$  of diffeological $A$-bundle gerbes without and with connections, respectively, the symbol $\hc 1$ denotes the operation of producing a category from a 2-category by identifying 2-isomorphic 1-morphisms, and the vertical arrow is the operation of forgetting  connections. The right column contains categories of principal $A$-bundles over $LM$, and the main
problem addressed in Parts II and III is to find  versions of these two categories such that the horizontal arrows are equivalences of categories.

Both categories of bundles over $LM$ are based  on the notion of a \emph{fusion product}, which generalizes condition \erf{cond1} from maps to bundles. Fusion products have been introduced in a slightly different form by Brylinski and McLaughlin \cite{brylinski4,brylinski1}, and have also been used in yet another  form by Stolz and Teichner \cite{stolz3}. Here, a fusion product on a principal $A$-bundle $P$ over $LM$ provides fibre-wise isomorphisms
\begin{equation*}
\lambda: P_{\prev{\gamma_2} \pcomp \gamma_1} \otimes P_{\prev{\gamma_3} \pcomp \gamma_2} \to P_{\prev{\gamma_3} \pcomp \gamma_1}
\end{equation*} 
for triples $\gamma_1,\gamma_2,\gamma_3$  of paths in $M$ with a common initial point and a common end point \cite[Definition 2.1.3]{waldorf10}. Principal $A$-bundles over $LM$ with fusion products are called \emph{fusion bundles} and form a category which we denote by $\fusbun A {LM}$. 
Fusion products are important because they furnish a functor
\begin{equation*}
\un_x: \fusbun A{LM} \to \hc 1\diffgrb AM\text{,}
\end{equation*}
which we call \emph{regression functor} \cite[Section 5.1]{waldorf10}. It depends -- up to natural equivalences -- on a base point $x\in M$. The two functors $\uncon_x$ and $h\un_x$ in diagram \erf{diag2} are variations  of this functor.

In the setup with connections -- corresponding to the first row in diagram \erf{diag2} and addressed in \cite{waldorf10} -- fusion bundles have to be equipped with \emph{superficial connections} \cite[Definition A]{waldorf10}, forming the category $\fusbunconsf A{LM}$.  We have introduced superficial connections as connections whose holonomy is subject to certain constraints. They permit to promote the regression functor to a setup \quot{with connections}, i.e. to a functor
\begin{equation*}
\uncon_x: \fusbunconsf A{LM} \to \hc 1 \diffgrbcon AM\text{.}
\end{equation*}
In the opposite direction, there exists a transgression functor $\trcon$ defined by Brylinski and McLaughlin \cite{brylinski1}. 
Theorem A of Part II states that the functors $\uncon_x$ and $\trcon$ form  an equivalence of categories. Thus, fusion bundles with superficial connections over $LM$ are our loop space formulation for gerbes with connections over $M$.

In the setup without connections -- corresponding to the second row in diagram \erf{diag2} and addressed in the present Part III -- fusion bundles undergo two modifications. A detailed account is given in the next section. First, we
equip them with a \emph{thin structure}, a kind of equivariant structure with respect to thin homotopies of loops. Fusion bundles with thin structure are called \emph{thin fusion bundles}.
Second, we
pass to the \emph{homotopy category}, i.e. we identify homotopic bundle morphisms.
The result is the category $h\fusbunth A {LM}$ in the bottom right corner of diagram \erf{diag2}.
 The vertical functor $\sufi$ in the second column of diagram \erf{diag2} produces (via parallel transport) a thin structure from a superficial connection, and projects a bundle morphism to its homotopy class. The regression functor $\un_x$ factors on the level of morphisms through homotopy classes, and so induces a functor \begin{equation}
\label{eq:hreg}
h\un_x: h\fusbunth A{LM} \to \hc1 \diffgrb AM\text{,}
\end{equation}
which we find in the bottom row of diagram \erf{diag2}. In contrast to the setup \quot{with connections}, there is no distinguished inverse functor. Still, Theorem \ref{th:mainA}  of the present article states that the functor \erf{eq:hreg} is an equivalence of categories. Thus, thin fusion bundles over $LM$ are our loop space formulation for gerbes over $M$.

Theorem \ref{th:mainB} of the present article states that diagram \erf{diag2} is (strictly) commutative -- it expresses the fact that our two loop space formulations, namely fusion bundles with superficial connections and thin fusion bundles, are compatible with passing from the setup \emph{with} connections to the one \emph{without} connections.  Finally, all categories and functors in  diagram \erf{diag2}  are monoidal and natural with respect to base point-preserving smooth maps.

Now that we have summarized the results of this project, let me indicate how they can be used. The typical application arises whenever some theory over a smooth manifold $M$ can be formulated in terms of categories of abelian gerbes or gerbes with connections. Then, our results allow to transgress the theory to an equivalent theory on the loop space $LM$, formulated in terms of fusion bundles with thin structures or superficial connections.

One example is \emph{lifting problems} -- problems of lifting the structure group of a principal bundle to an (abelian) central extension -- which can be formulated in terms of \emph{lifting bundle gerbes}. The case of spin structures was the initial motivation for this project: spin structures are lifts of the structure group of the frame bundle of an oriented Riemannian manifold $M$ from $\so -$ to $\spin -$. Stolz and Teichner have shown using Clifford bimodules that spin structures on $M$ are the same as \emph{fusion-preserving orientations} of $LM$. In \cite[Corollary E]{waldorf10} we have reproduced this result using transgression/regression for the \emph{discrete group} $A=\Z/2\Z$. In \cite[Corollary 6.3]{waldorf13} we have treated geometric lifting problems for general abelian Lie groups $A$, in particular leading to a  loop space formulation of spin connections (for $\spinc-$-structures). In Section \ref{sec:applications} of the present paper  we complete the discussion of lifting problems; see Theorems \ref{th:lifting} and \ref{th:liftingcompl}.

Another example are \emph{multiplicative gerbes} over a Lie group $G$ \cite{carey4}. Multiplicative gerbes transgress to central extensions of the loop group $LG$. As a result of this project, we can identify additional structure on these central extensions: fusion products, thin structures, and superficial connections. Some consequences of the existence of these structures have been elaborated in Section 1.3 of \cite{waldorf10}; other features are yet to be developed, for instance, the impact of fusion products  to the representation theory of central extensions of loop groups. Further possible things one could look at in this context are \emph{index gerbes} \cite{Lott2002,bunke2002} and twisted K-theory in its formulation by bundle gerbe modules \cite{bouwknegt1}.

An extension of the results of this paper series to gerbes and bundles with \emph{non-abelian} structure groups seems at present time far away. In joint work with Nikolaus \cite{Nikolaus,Nikolausa} we have managed to define a transgression map on the level of isomorphism classes of non-abelian gerbes, but we have not looked at fusion products, thin structures, or superficial connections on bundles with non-abelian structure groups.

\paragraph{Acknowledgements.}
I gratefully acknowledge  a Feodor-Lynen scholarship, granted by the Alex\-an\-der von Hum\-boldt Foundation.   I thank Thomas Nikolaus  for helpful discussions.

\section{Results of this Article}

\label{sec:results}

\subsection{Main Theorems}

\label{sec:maintheorem}

In the present article we are concerned with principal bundles over the free loop space $LM = C^{\infty}(S^1,M)$ of a smooth manifold $M$. We understand $LM$ as a \emph{diffeological space}, and use the theory of principal bundles over diffeological spaces developed in \cite[Section 3]{waldorf9}. 
The structure group of the bundles  is an abelian Lie group $A$, possibly discrete, possibly non-compact. The goal of this article is to specify additional structure for principal $A$-bundles over $LM$ such that the resulting category becomes equivalent to the category of diffeological $A$-bundle gerbes over $M$. A detailed discussion of this additional structure is the content of Section \ref{sec:fusbun}.

The first additional structure is a \emph{thin structure}, a central invention of this article and the content of Section \ref{sec:thin}.  We first introduce the notion of an almost-thin structure, and then formulate an integrability condition. Roughly, an \emph{almost-thin structure} provides fibre-wise identifications 
\begin{equation*}
d_{\tau_1,\tau_2}: P_{\tau_1} \to P_{\tau_2}
\end{equation*}  
where $\tau_1$ and $\tau_2$ are thin homotopic loops, i.e. there is a rank-one homotopy between them. The identifications $d_{\tau_1,\tau_2}$ are supposed to satisfy the cocycle condition
\begin{equation*}
d_{\tau_{2},\tau_{3}} \circ d_{\tau_1,\tau_2} = d_{\tau_1,\tau_3}
\end{equation*}
whenever $\tau_1$, $\tau_2$ and $\tau_3$ are thin homotopic. The crucial point is to specify in which way the identifications $d_{\tau_1,\tau_2}$ fit together into a \emph{smooth family}, i.e. to define a diffeology on the set $\thinpairs {LM}$ of pairs of thin homotopic loops.
Somewhat surprisingly, it turns out that the relevant diffeology is \emph{not} the subspace diffeology of $\thinpairs {LM} \subset LM \times LM$, but a finer one which allows to choose thin homotopies locally in smooth families. In more appropriate language, we introduce a diffeological groupoid $\mathfrak{L}M$ which we call the \emph{thin loop stack}; with objects $LM$ and morphisms $\thinpairs{LM}$. A principal $A$-bundle \emph{over the groupoid $\mathfrak{L}M$} is a pair $(P,d)$ of a principal $A$-bundle over $LM$ and an equivariant structure $d$ -- this equivariant structure is precisely what we call an almost-thin structure (Definition \ref{def:catalmost}).

A \emph{thin structure} is an almost-thin structure that satisfies an integrability condition which we formulate now. On bundles over the loop space $LM$ one can look at particular classes of connections. Here, the following class is relevant: a connection $\omega$ is called \emph{thin}, if its holonomy around a loop $\tau$ in $LM$ vanishes whenever the associated torus $S^1 \times S^1\to M$ is  of rank (at most) one. Equivalently, its parallel transport between two thin homotopic loops $\tau_1$ and $\tau_2$ is independent of the choice of a (rank one) path between them, and so determines  a well-defined  map $d^{\omega}_{\tau_1,\tau_2}:P_{\tau_1} \to P_{\tau_2}$. We prove that these maps form an almost-thin structure. The integrability condition for an almost-thin structure $d$ is that there exists a thin connection $\omega$ such that $d=d^{\omega}$ (Definition \ref{def:integability}).

The second additional structure is a \emph{fusion product}, which we have introduced in \cite[Definition 2.1.3]{waldorf10}.   We say that a path in $M$ is a smooth map $\gamma: [0,1] \to M$ with \quot{sitting instants}. These ensure that one can  compose two paths smoothly whenever the first ends where the second starts. The diffeological space of paths in $M$ is denoted by $PM$, and the end-point evaluation by $\ev:PM \to M \times M$. In diffeological spaces one can form the $k$-fold fibre products $PM^{[k]}$ of $PM$ with itself over the evaluation map. Then, we have a well-defined  smooth map
\begin{equation*}
l\maps PM^{[2]} \to LM\maps (\gamma_1,\gamma_2) \mapsto \prev{\gamma_2} \pcomp \gamma_1\text{.}
\end{equation*}
We denote by $e_{ij}$ the composition of $l$ with the projection $\mathrm{pr}_{ij}: PM^{[3]} \to PM^{[2]}$. Now, a \emph{fusion product} $\lambda$ on a principal $A$-bundle $P$ is a bundle morphism
\begin{equation*}
\lambda: e_{12}^{*}P \otimes e_{23}^{*}P \to e_{13}^{*}P
\end{equation*}
over $PM^{[3]}$ which satisfies an associativity constraint over $PM^{[4]}$.

Now we have described two additional structures for principal $A$-bundles over $LM$. We require that both are compatible in a certain way. In Part II we have already formulated compatibility conditions between a \emph{fusion product} and a \emph{connection}. Using these, we say that a \emph{compatible, symmetrizing thin structure} is an almost-thin structure which can be integrated to a compatible, symmetrizing thin connection (Definition \ref{def:compandsym}). More details about fusion products and  compatibility conditions are given in Section \ref{sec:fus}.

\begin{maindefinition}{A}
\label{def:fusbunth}
A \emph{thin fusion bundle} over $LM$ is a principal $A$-bundle equipped with a fusion product and a compatible, symmetrizing thin structure. \end{maindefinition}

Thin fusion bundles form a category $\fusbunth A {LM}$ whose morphisms are bundle morphisms that preserve the fusion products and the thin structures. It turns out that the category $\fusbunth A {LM}$ is not yet equivalent to the category $\hc 1 \diffgrb AM$ of diffeological $A$-bundle gerbes over $M$, as intended. This can be seen by looking at the Hom-sets. It is well-known that the Hom-category between two bundle gerbes forms a module over the monoidal category $\bun AM$ of principal $A$-bundles over $M$, in such a way that the associated Hom-set in $\hc 1 \diffgrb AM$ forms a torsor for the group $\hc 0 \bun AM$. On the other hand, the Hom-sets of $\fusbunth A{LM}$ form a torsor for the group $\fus A {\mathcal{L}M}$ of fusion maps. However,  the group $\hc 0 \bun AM$ is \emph{not} isomorphic to $\fus A{\mathcal{L}M}$, so that the two torsors cannot be in bijection. Instead, by \cite[Theorem B]{waldorf9} the group $\hc 0 \bun AM$ is isomorphic to the group $h\fus A {\mathcal{L}M}$ of \emph{homotopy classes} of fusion maps. This forces us to pass to the \emph{homotopy category} of thin fusion bundles.

\begin{maindefinition}{B}
\label{def:hfusbunth}
The \emph{homotopy category of thin fusion bundles} --  which we denote by $h\fusbunth A{LX}$ -- consists of thin fusion bundles and homotopy classes of bundle morphisms that preserve the fusion product and the thin structures. \end{maindefinition}

A detailed discussion of homotopy categories of bundles is the content of Section \ref{sec:homcat}. In order to establish the equivalence between the categories $\hc 1 \diffgrb AM$ and $h\fusbunth A {LM}$ we use the regression functor
\begin{equation*}
\un_x: \fusbun A {LM} \to \hc 1 \diffgrb AM
\end{equation*}
defined in \cite{waldorf10}, for which we prove that it factors -- on the level of morphisms -- through homotopy classes (Proposition \ref{prop:regfact}).  We denote the resulting functor by $h\un_x$; a detailed construction is given in Section \ref{sec:regression}.
The main theorem of this article is the following.

\begin{maintheorem}{A}
\label{th:mainA}
Let $M$ be a connected smooth manifold. 
Regression induces an equivalence of monoidal categories:
\begin{equation*}
h\un_x : h\fusbunth A {LM} \to \hc 1 \diffgrb A M \text{.}
\end{equation*}
Moreover, this equivalence is natural with respect to base point-preserving smooth maps.
\end{maintheorem}

We want to relate the equivalence of Theorem \ref{th:mainA} to the equivalence $\uncon_x$ in the setting with connections \cite[Theorem A]{waldorf10}. There, a category $\fusbunconsf A {LM}$ of fusion bundles with superficial connections is relevant. A \emph{superficial connection} is a thin connection with an additional property related to certain homotopies between loops in $LM$. In particular, a superficial connection $\omega$ defines a thin structure $d^{\omega}$, and since  connections on fusion bundles are (by definition) compatible and symmetrizing, the thin structure $d^{\omega}$ is also compatible and symmetrizing. This defines a functor
\begin{equation*}
\sufi: \fusbunconsf A {LM} \to h\fusbunth A {LM}\text{.}
\end{equation*}
The following theorem states that this functor corresponds under regression to the operation of forgetting  bundle gerbe connections. 

\begin{maintheorem}{B}
\label{th:mainB}
For any connected smooth manifold $M$, the diagram
\begin{equation*}
\alxydim{@C=2cm@R=1.5cm}{\fusbunconsf A {LM} \ar[r]^{\uncon_x} \ar[d]_{\sufi} & \hc 1\diffgrbcon A M \ar[d]^{\sf} \\   h\fusbunth A {LM} \ar[r]_{h\un_x} & \hc 1\diffgrb A M }
\end{equation*}
of categories and functors is strictly commutative. 
\end{maintheorem}

Theorem \ref{th:mainB} holds basically by construction of the regression functor $h\un_x$ (Proposition \ref{prop:comm}). The proof of Theorem \ref{th:mainA} is carried out in Section \ref{sec:proof}. There we prove that the functor $h\un_x$ is a bijection on isomorphism classes of objects, and that it induces on the level of morphisms  equivariant maps between torsors over isomorphic groups. The hardest part is the proof that the functor $h\un_{x}$ is injective on isomorphism classes of objects. One ingredient we use is a generalization of a lemma of Murray \cite{murray} about differential forms on fibre products of a surjective submersion from the \emph{smooth} to the \emph{diffeological} setting -- this is discussed in Appendix \ref{sec:app}.

In Appendix \ref{sec:6} we explain -- separated from the main text -- a byproduct of the discussion in Section \ref{sec:proof}, namely that two compatible, symmetrizing thin structures on the \emph{same} fusion bundle are necessarily isomorphic (Proposition \ref{prop:superficialprop}). This may be surprising since it means that -- on the level of \emph{isomorphism classes} -- a thin structure is no additional structure at all. Still, as we argue in Appendix \ref{sec:6}, thin structures are necessary for the correct \emph{categorical structure}.

\subsection{Application to Lifting Problems}

\label{sec:applications}

\def\adjust#1{\!\!\!\begin{tabular}{c}$#1$\end{tabular}\!\!\!}
\def\hat#1{\widehat{#1}}

In this section we describe an application of our results to lifting problems, completing a discussion started in \cite[Section 1.2]{waldorf10} and \cite{waldorf13}. Let
\begin{equation*}
\alxydim{}{1 \ar[r] & A \ar[r] & \adjust{\hat G} \ar[r]^-{p} & G \ar[r] & 1 }
\end{equation*}
be a central extension of Lie groups, with $A$ abelian. Let $E$ be a principal $G$-bundle over $M$. A \emph{$\hat G$-lift} of $E$ is a principal $\hat G$-bundle $F$ over $M$ together with a smooth, fibre-preserving map $f: F \to E$ satisfying $f(e\cdot g) = f(e) \cdot p(g)$ for all $e\in F$ and $g\in \hat G$. A morphism between $\hat G$-lifts $F$ and $F'$ is a bundle morphism $\varphi : F \to F'$ that exchanges the maps to $E$. We denote by $\struc {\hat G}E$ the category of $\hat G$-lifts of $E$. The obstruction against the existence of $\hat G$-lifts can be represented by a bundle gerbe, the \emph{lifting bundle gerbe $\mathcal{G}_E$} \cite{murray}. Namely, there is an  equivalence
of categories\begin{equation}
\label{eq:lift}
\hom (\mathcal{G}_E,\mathcal{I}) \cong \struc {\hat G} E\text{,}
\end{equation} 
where $\mathcal{I}$ is the trivial bundle gerbe and $\hom$ denotes the Hom-category of the 2-category of  bundle gerbes. In other words, the trivializations of the lifting bundle gerbe $\mathcal{G}_E$ are exactly the $\hat G$-lifts of $E$. 

Since the regression functor $h\un_x$ is essentially surjective by Theorem \ref{th:mainA}, there exists a thin fusion bundle $P_E$ over $LM$ together with a 1-morphism $\mathcal{A}: \mathcal{G}_E \to \un_x(P_E)$. Further, if $\trivlin$ denotes  the trivial thin fusion bundle over $LM$, there is an obvious canonical trivialization $\mathcal{T}: \un_x(\trivlin) \to \mathcal{I}$. Since the regression functor $h\un_x$ is full and faithful, we obtain a bijection
\begin{equation}
\label{eq:bijhom}
\hom(P_E,\trivlin) \to \hc  0 \hom(\mathcal{G}_E,\mathcal{I}): \varphi \mapsto \mathcal{T}  \circ  h\un_x(\varphi) \circ  \mathcal{A}\text{,}
\end{equation}
where $\hom(P_E,\trivlin)$ is the Hom-set in the homotopy category of thin fusion bundles.
The elements in this Hom-set can be identified with homotopy classes of \emph{fusion-preserving, thin sections}, i.e. sections $\sigma: LM \to P_E$ such that
\begin{equation*}
\lambda(\sigma(\gamma_1,\gamma_2) \otimes \sigma(\gamma_2,\gamma_3) ) = \sigma(l(\gamma_1,\gamma_3)) \quand
d_{\tau_0,\tau_1}(\sigma(\tau_0)) = \sigma(\tau_1)
\end{equation*} 
for all $(\gamma_1,\gamma_2,\gamma_3) \in PM^{[3]}$ and all thin homotopic loops $\tau_0,\tau_1 \in LM$. 
Now, the bijection \erf{eq:bijhom} and the equivalence \erf{eq:lift}  together imply the following.

\begin{maintheorem}{C}
\label{th:lifting}
Let $E$ be a principal $G$-bundle over a connected smooth manifold $M$, and let $\hat G$ be a central extension of $G$ by an abelian Lie group $A$. Then, any choice $(P_E,\mathcal{A})$ as above determines a bijection
\begin{equation*}
\bigset{3.5cm}{Isomorphism classes of $\hat G$-lifts of $E$}
\cong
\bigset{3.4cm}{Homotopy classes of fusion-preserving thin sections of $P_E$}\text{.} \end{equation*}
In particular, $E$ admits a $\hat G$-lift if and only if $P_E$ admits a fusion-preserving thin section.
\end{maintheorem}
  
Theorem   \ref{th:lifting} is a complete loop space formulation of lifting problems. It generalizes \cite[Theorem B]{waldorf10} from \emph{discrete} abelian groups $A$ to arbitrary ones. Indeed, if $A$ is discrete, there are no non-trivial homotopies and every section is thin (with respect to the unique trivial thin structure). 

In the discrete case, Theorem   \ref{th:lifting} can be applied to spin structures  on an oriented Riemannian manifold $M$ of dimension $n$, since spin structures are   $\spin n$-lifts of the frame bundle $F M$ of $M$; see \cite[Corollary E]{waldorf10}. For a non-discrete example, we may apply it to $\spinc n$-structures on $M$, which are lifts of $FM$ along the central extension
\begin{equation*}
1 \to \ueins \to \spinc n \to \so n \to 1\text{.}
\end{equation*}
Using the Levi-Civita connection on $M$ and the transgression functor $\trcon$ one can canonically construct the principal $\ueins$-bundle $P_{FM}$ and the isomorphism $\mathcal{A}$ \cite[Section 6]{waldorf13}, slightly improving the general situation. The canonical bundle $P_{FM}$ is also called the \emph{complex orientation bundle} over $LM$, and its sections are called \emph{complex orientations of $LM$}. Then, Theorem \ref{th:lifting} becomes the following.

\begin{maincorollary}{A}
Let $M$ be an oriented Riemannian manifold of dimension $n$. Then, there is a bijection
\begin{equation*}
\bigset{4.4cm}{Isomorphism classes of $\spinc n$-structures on $M$}
\cong
\bigset{4.8cm}{Homotopy classes of fusion-preserving, thin, complex orientations of $LM$}\text{.}
\end{equation*}
\end{maincorollary}

In order to make some more connections to my paper \cite{waldorf13} let us return the  general situation of a central extension $p:\hat G \to G$ of a Lie group $G$ by an abelian Lie group $A$. We recall that if a principal $G$-bundle $E$ carries a connection $\omega \in \Omega^1(E,\mathfrak{g})$, a \emph{geometric $\hat G$-lift} of $E$ is a $\hat G$-lift $F$ together with a connection $\hat\omega \in \Omega^1(F,\hat{\mathfrak{g}})$ such that $f^{*}\omega = p_{*}(\hat\omega)$. Here, $\mathfrak{g}$ and $\hat{\mathfrak{g}}$ are the Lie algebras of $G$ and $\hat G$, respectively, and $\mathfrak{a}$ will denote the Lie algebra of $A$. A morphism between geometric $\hat G$-lifts $F$ and $F'$ is a connection-preserving bundle morphism $\varphi:F \to F'$ that exchanges the maps to $E$. One can associate to each geometric  $\hat G$-lift $F$ a \quot{scalar curvature} $\rho \in \Omega^2(M,\mathfrak{a})$ \cite[Section 2]{waldorf13}. Geometric $\hat G$-lifts of $E$ with fixed scalar curvature $\rho$ form a category  
$\struccon {\hat G}{\rho}E$. The obstruction theory with the lifting bundle gerbe $\mathcal{G}_E$ extends to a setting with connections: one can equip the lifting gerbe $\mathcal{G}_E$ with a connection \cite{gomi3}, such that there is an equivalence of categories 
\begin{equation}
\label{eq:liftcon}
\homcon(\mathcal{G}_E,\mathcal{I}_{-\rho}) \cong \struccon {\hat G}{\rho} E\text{,}
\end{equation} 
see \cite[Theorem 2.2]{waldorf13}.
Here, $\mathcal{I}_{-\rho}$ is the trivial bundle gerbe equipped with the connection given by $-\rho$, and $\homcon$ denotes the Hom-category of the 2-category of bundle gerbes \emph{with connection}. The analogue of Theorem \ref{th:lifting} in the setting with connections is \cite[Theorem A]{waldorf13}: it combines the equivalence \erf{eq:liftcon} with \cite[Theorem A]{waldorf10} and proves that isomorphism classes of \emph{geometric $\hat G$-lifts of $E$ with fixed scalar curvature $\rho$} are in bijection to fusion-preserving sections of $P_E$ of curvature $L\rho$, i.e. sections that pullback the connection 1-form of $P_E$ to the transgressed 1-form $-L\rho$ on $LM$. Under the functor $\sufi$, such sections become thin (Proposition \ref{prop:sftrivial}). Due to Theorem \ref{th:mainB}, we obtain the following.

\begin{maintheorem}{D}
\label{th:liftingcompl}
Let $E$ be a principal $G$-bundle with connection over a connected smooth manifold $M$, and let $\hat G$ be a central extension of $G$ by an abelian Lie group $A$. Let $P_E$ be the transgression of the lifting gerbe $\mathcal{G}_E$, and let $\rho \in \Omega^2(M,\mathfrak{a})$. Then, the diagram
\begin{equation*}
\alxydim{@C=3cm@R=1.5cm}{\hc 0 (\struccon {\hat G} \rho E) \ar[d] \ar@{<->}[r]^-{\cong} & \left\lbrace \begin{minipage}[c]{3cm}
\begin{center}
\baselineskip=1.2em
Fusion-preserving sections of $P_E$ of curvature $-L\rho$
\end{center}
\end{minipage}\right\rbrace \ar[d]^{\sufi} \\ \hc 0 (\struc {\hat G}E) \ar@{<->}[r]_-{\cong} & \left\lbrace
\begin{minipage}[c]{4cm}
\begin{center}
\baselineskip=1.2em
Homotopy classes of fusion-preserving, thin sections of $\sufi(P_E)$
\end{center}
\end{minipage}\right\rbrace }
\end{equation*}
is commutative, and the horizontal arrows are bijections.
\end{maintheorem}

Theorem \ref{th:liftingcompl} is a complete loop space formulation of lifting problems and geometric lifting problems for central extensions by abelian Lie groups.

\setsecnumdepth{2}

\section{Thin Fusion Bundles over Loop Spaces}

\label{sec:fusbun}

In this section we introduce the groupoid $\fusbunth A {LX}$ of thin fusion bundles over the loop space $LX$ of a diffeological space $X$. For an introduction to diffeological spaces we refer to Appendix A of \cite{waldorf9}. The loop space  $LX$ is the diffeological space of smooth maps $\tau:S^1 \to X$. In the following we denote -- for any diffeological space $Y$ -- by $PY$ the diffeological space of paths in $Y$: smooth maps $\gamma: [0,1] \to Y$ with \quot{sitting instants}, i.e. they are locally constant in a neighbourhood of $\left \lbrace 0,1 \right \rbrace$.

\subsection{Thin Bundles}

\label{sec:thin}

One of the main inventions of this article is  a new version of loop space: the \emph{thin loop stack} $\thingrpd LX$. It is based on the notion of a \emph{thin path} in $LX$, also known as \emph{thin homotopy}. A path $\gamma \in PLX$ is called \emph{thin}, if the associated map $\exd\gamma: [0,1] \times S^1 \to X$ is of rank one. If $X$ is a smooth manifold, this means that the rank of the differential of $\exd\gamma$ is at most one at all points. The notion of  rank of a  smooth map between general diffeological spaces is a consistent generalization introduced in \cite[Definition 2.1.1]{waldorf9}. 

The thin loop stack $\mathfrak{L}X$ is not a diffeological \emph{space} but a diffeological \emph{groupoid}.  Its space of objects is the loop space $LX$. We denote by $\thinpairs {LX}$ the set of pairs $(\tau_1,\tau_2)$ of thin homotopic loops  in $X$, and let $\mathrm{pr}_1,\mathrm{pr}_2: \thinpairs {LX}  \to LX$ be the two projections. In order to define a  diffeology on $\thinpairs {LX}$ we have to specify \quot{generalized charts}, called \emph{plots}. Here, we define a map $c:U \to \thinpairs {LX}$ to be a plot if every point $u\in U$ has an open neighbourhood $W$ such that there exists a smooth map $h:W\to PLX$ so that $h(w)$ is a thin path from $\mathrm{pr}_1(c(w))$ to $\mathrm{pr}_2(c(w))$. The three axioms of a diffeology are easy to verify. The diffeological space $\thinpairs {LX}$ is the space of morphisms of the groupoid $\thingrpd LX$. The two projections $\mathrm{pr}_1$ and $\mathrm{pr}_2$ are subductions (the diffeological analogue of a map with smooth local sections) and provide target and source maps of $\thingrpd LX$. Finally, the composition 
\begin{equation*}
\circ: \thinpairs {LX} \;\lli{\mathrm{pr}_2}\times_{\mathrm{pr}_1}\thinpairs {LX} \to \thinpairs {LX}
\end{equation*}
is defined by dropping the loop in the middle. This is smooth since smooth families of thin homotopies can be composed smoothly; see \cite[Proposition 2.1.6]{waldorf9}.

\begin{definition}
\label{def:catalmost}
The category of \emph{almost-thin bundles} over $LX$ is by definition the category $\bun A {\thingrpd LX}$ of  principal $A$-bundles over the thin loop stack $\thingrpd LX$.
\end{definition}

Thus, an \emph{almost-thin bundle} is a principal $A$-bundle over the loop space $LX$ together with a bundle isomorphism
\begin{equation*}
d: \mathrm{pr}_1^{*}P \to \mathrm{pr}_2^{*}P
\end{equation*}
over $\thinpairs {LX}$ satisfying the cocycle condition
\begin{equation*}
d_{\tau_2,\tau_3} \circ d_{\tau_1,\tau_2} = d_{\tau_1,\tau_3}
\end{equation*}
for any triple $(\tau_1,\tau_2,\tau_3)$ of thin homotopic loops. The isomorphism $d$ will be called  \emph{almost-thin structure} on $P$.  
A morphism between almost-thin bundles $(P_1,d_1)$ and $(P_2,d_2)$ is a bundle
morphism $\varphi:P_1 \to P_2$ such that the diagram
\begin{equation*}
\alxydim{@=1.5cm}{\mathrm{pr}_1^{*}P_1 \ar[d]_{d_1} \ar[r]^{\mathrm{pr}_1^{*}\varphi} & \mathrm{pr}_1^{*}P_2 \ar[d]^{d_2} \\ \mathrm{pr}_2^{*}P_1 \ar[r]_{\mathrm{pr}_2^{*}\varphi} & \mathrm{pr}_2^{*}P_2}
\end{equation*}
of bundle morphisms over $\thinpairs{LX}$ is commutative. Such bundle morphisms will be called \emph{thin bundle morphisms}.

Another interesting diffeological groupoid related to loop spaces is the action groupoid $\act {LX} {\diff^{+}(S^1)}$, where $\diff^{+}(S^1)$ denotes the diffeological group of orientation-preserving diffeomorphisms of $S^1$, acting on $LX$ by reparameterization. There is a smooth functor
\begin{equation}
\label{eq:equivstack}
\act {LX}{\diff^{+}(S^1)} \to \thingrpd LX
\end{equation}
which is the identity on the level of objects, and given by
\begin{equation*}
(\tau, \varphi) \mapsto (\tau, \tau \circ \varphi)
\end{equation*}
on the level of morphisms. Indeed, $\varphi$ is homotopic to the identity $\id_{S^1}$, and any such homotopy induces a thin homotopy between $\tau$ and $\tau\circ \varphi$ \cite[Proposition 2.2.5]{waldorf10}. 
Via pullback along the functor \erf{eq:equivstack} we obtain the following.

\begin{proposition}
Every almost-thin structure on a principal $A$-bundle $P$ over $LX$ determines a $\diff^{+}(S^1)$-equivariant structure on $P$. 
\end{proposition}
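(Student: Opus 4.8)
The plan is to obtain the $\diff^{+}(S^1)$-equivariant structure by \emph{pulling back} the almost-thin structure along the smooth functor \erf{eq:equivstack}. First I would recall that a principal $A$-bundle over a diffeological groupoid $\Gamma$ with objects $\Gamma_0$ and morphisms $\Gamma_1$ is, by definition, a principal $A$-bundle $P$ over $\Gamma_0$ together with an isomorphism over $\Gamma_1$ between the two pullbacks along source and target, subject to a cocycle condition over composable pairs. A smooth functor $F\maps \Gamma \to \Gamma'$ that is the identity on objects induces, by pulling back the structure isomorphism along the map $F_1\maps \Gamma_1 \to \Gamma'_1$, a functor $F^{*}$ from principal $A$-bundles over $\Gamma'$ to principal $A$-bundles over $\Gamma$; that the pulled-back isomorphism still satisfies the cocycle condition follows because $F_1$ respects composition, so the composable-pair diagram for $F^{*}$ is the $F_1$-pullback of the one for the original bundle.

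Next I would apply this to the functor $\act {LX}{\diff^{+}(S^1)} \to \thingrpd LX$ of \erf{eq:equivstack}, which is the identity on objects (both have object space $LX$) and is well-defined precisely because, as noted in the excerpt, $\varphi \in \diff^{+}(S^1)$ is homotopic to $\id_{S^1}$ and any such homotopy induces a thin homotopy from $\tau$ to $\tau\circ\varphi$ (citing \cite[Proposition 2.2.5]{waldorf10}). Thus, given an almost-thin bundle $(P,d)$ over $LX$ --- i.e. a principal $A$-bundle over $\thingrpd LX$ in the sense of Definition \ref{def:catalmost} --- its pullback along \erf{eq:equivstack} is a principal $A$-bundle over the action groupoid $\act {LX}{\diff^{+}(S^1)}$. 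But a principal $A$-bundle over an action groupoid $\act Y G$ is exactly a $G$-equivariant principal $A$-bundle over $Y$: the morphism space is $Y \times G$, the two structure maps to $Y$ are the projection and the action, and the cocycle condition over $Y \times G \times G$ is the usual compatibility of the equivariant structure with the group multiplication. Concretely, the equivariant structure isomorphism is $\mathrm{pr}_1^{*}P \to a^{*}P$ over $LX \times \diff^{+}(S^1)$ whose value at $(\tau,\varphi)$ is $d_{\tau,\tau\circ\varphi}\maps P_\tau \to P_{\tau\circ\varphi}$, and the cocycle condition $d_{\tau_2,\tau_3}\circ d_{\tau_1,\tau_2} = d_{\tau_1,\tau_3}$ for thin homotopic loops specializes, with $\tau_1 = \tau$, $\tau_2 = \tau\circ\varphi$, $\tau_3 = \tau\circ\varphi\circ\psi$, to the required $\diff^{+}(S^1)$-equivariance relation.

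I do not expect a serious obstacle here; the statement is essentially formal once the functor \erf{eq:equivstack} is in hand. The only point deserving care is the diffeological bookkeeping: one must check that ``principal $A$-bundle over the action groupoid'' unwinds to ``$\diff^{+}(S^1)$-equivariant principal $A$-bundle'' in the diffeological (rather than merely smooth-manifold) setting, i.e. that the source and target subductions of $\act {LX}{\diff^{+}(S^1)}$ are the projection and the action map and that the composable-pairs space is $LX \times \diff^{+}(S^1) \times \diff^{+}(S^1)$ with the evident diffeology --- all of which is immediate from the general definition of an action groupoid in diffeological spaces. Hence the mild ``hard part'', if any, is purely notational: confirming that the pullback construction $F^{*}$ along an identity-on-objects smooth functor lands in principal $A$-bundles and is functorial, which I would either cite from the bundle-over-groupoid formalism used in \cite{waldorf9,waldorf10} or note in one line.
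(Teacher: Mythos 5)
Your proposal is correct and follows exactly the paper's approach: the paper obtains the equivariant structure precisely by pulling back the almost-thin structure along the smooth functor \erf{eq:equivstack} from the action groupoid $\act {LX}{\diff^{+}(S^1)}$ to the thin loop stack, identifying bundles over the action groupoid with $\diff^{+}(S^1)$-equivariant bundles. Your unwinding of the equivariance isomorphism and cocycle condition is the same formal argument the paper leaves implicit.
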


\begin{remark}
\label{rem:stacks}
The following is a sequence of remarks about further constructions with loop spaces and  comparisons between them.
\begin{enumerate}[(i)]

\item 
\label{rem:stacks:descent}
The first construction is the \emph{thin loop space $\mathcal{L}X$} introduced in \cite{waldorf9}: it is a diffeological space obtained as the quotient of $LX$ by the equivalence relation of thin homotopy. We obtain a diffeological groupoid, denoted by $LX^{[2]}$, with objects $LX$ and morphisms $LX \times_{\mathcal{L}X} LX$, the fibre product of $LX$ with itself over the  projection $\pr: LX \to \mathcal{L}X$. Since diffeological bundles form a stack \cite[Theorem 3.1.5]{waldorf9}, we have an equivalence of categories
\begin{equation}
\label{eq:bundlesequiv}
\bun A {\mathcal{L}X}  \cong \bun A {LX^{[2]}}\text{,}
\end{equation}
i.e., bundles over the groupoid $LX^{[2]}$ are the same as bundles over the thin loop space $\mathcal{L}X$. 

\item
In order to relate the groupoid $LX^{[2]}$ to the other two groupoids discussed so far,
let us consider the identity map
\begin{equation}
\label{eq:identity}
\id: \thinpairs {LX} \to LX \times_{\mathcal{L}X}LX\text{.}
\end{equation}
It is smooth and thus defines a smooth functor $\thingrpd LX \to LX^{[2]}$. A smooth functor is a \emph{weak equivalence}, if it is smoothly essentially surjective, and smoothly fully faithful; see e.g. \cite{lerman1,metzler,Nikolaus}. In that case, the corresponding bundle categories are equivalent \cite[Corollary 2.3.11]{Nikolaus}.
Since the functor is the identity on objects, the first condition is trivial and the second condition is satisfied only if \erf{eq:identity} is a diffeomorphism. 
Invoking the definition of the diffeology on $\thinpairs {LX}$, the assertion that it is a diffeomorphism is equivalent to the following statement: 
\newlength{\meinebreite}
\setlength{\meinebreite}{\textwidth}
\addtolength{\meinebreite}{-8em}
\begin{equation}
\label{eq:assertion}
\begin{minipage}[c]{\meinebreite}
Suppose $c_1$ and $c_2$ are smooth families of loops in $LX$, parameterized by an open set $U \subset \R^n$, such that for each parameter $u \in U$ the loops $c_1(u)$ and $c_2(u)$ are thin homotopy equivalent. Then, one can choose these thin homotopies locally in smooth families. 
\end{minipage}
\end{equation}
I do not know whether or not \erf{eq:assertion} is true; one can certainly choose homotopies in smooth families, but these will generically not be thin. For the time being we must continue with the assumption that the functor $\thingrpd LX \to LX^{[2]}$ is not a weak equivalence, and the bundle categories are not equivalent. 

\item
All together we have a sequence of diffeological groupoids and smooth functors
\begin{equation*}
\act {LX}{\diff^+(S^1)} \to \thingrpd LX \to LX^{[2]}\text{,}
\end{equation*}
and induced pullback functors
\begin{equation*}
\bun A{\mathcal{L}X} \to \bun A {\thingrpd LX} \to \bun A{LX}^{\diff^{+}(S^1)}\text{.}
\end{equation*}
Summarizing, the almost-thin bundles we introduced here have \quot{more symmetry} than $\diff^{+}(S^1)$-equivariance, but not enough to descend to $\mathcal{L}X$.

\item
\label{rem:stacks:quotient}
Let us explain why \cite{waldorf9} only discussed the thin loop space $\mathcal{L}X$ but not the thin loop stack $\thingrpd LX$. There we have considered smooth maps $f: \mathcal{L}X \to A$ defined on the thin loop space. Since maps form a sheaf, such maps are the same as smooth maps on the groupoid $LX^{[2]}$,
\begin{equation}
\label{eq:bijectionmaps1}
D^{\infty}(\mathcal{L}X,A) = D^{\infty}(LX^{[2]},A)\text{.}
\end{equation}
This is just like for bundles; see \erf{eq:bundlesequiv}.

\noindent
The difference is that a map $f: LX^{[2]} \to A$ satisfies a descent \emph{condition} over $LX \times_{\mathcal{L}X} LX$ while a bundle over $LX^{[2]}$ involves a descent \emph{structure} over $LX \times_{\mathcal{L}X} LX$ (a certain bundle morphism).
As mentioned above, the only difference between the groupoids $\thingrpd LX$ and $LX^{[2]}$ lies in the diffeologies on their morphism spaces. A \emph{condition} cannot see this difference, while a \emph{structure} does. Hence, there is a bijection
\begin{equation}
\label{eq:bijectionmaps2}
D^{\infty}(\thingrpd LX,A) \cong D^{\infty}(LX^{[2]},A)\text{,}
\end{equation} 
but no  equivalence between the groupoids of bundles (assuming \erf{eq:assertion} does not hold). Combining \erf{eq:bijectionmaps1} and \erf{eq:bijectionmaps2} we see that for the purpose of \cite{waldorf9} the thin loop space $\mathcal{L}X$ was good enough.

\def\open{\mathcal{O}pen(\R)}

\item

The category of diffeological spaces is equivalent to the category of concrete sheaves over the  site $\mathcal{D}$ of open subsets of $\R^n$, smooth maps, and the usual open covers \cite{baez6}. From this perspective, one can regard the thin loop stack $\thingrpd LX$ as a sheaf of groupoids over $\mathcal{D}$. A construction one can perform now is to take the 0th homotopy sheaf $\tau_0(\thingrpd LX)$, i.e., the coequalizer of source and target. 

The problem is that coequalizers do in general \emph{not} preserve concreteness of sheaves, in which case above construction does not end in a diffeological space; see e.g.
\cite[Remark 1.2.26]{Wu2012}. In the present case one can show that $\tau_0(\thingrpd LX)$ is concrete  only if  \erf{eq:assertion} holds.
In fact, if \erf{eq:assertion} is true, then $\tau_0(\thingrpd LX) = \mathcal{L}X$ as diffeological spaces. 
\end{enumerate}
\end{remark}

An important way to obtain examples of almost-thin bundles over $LX$ is by starting with a bundle with thin connection.

\begin{definition}
\label{def:thin}
A connection on a principal $A$-bundle $P$ over $LX$ is called \emph{thin} if its holonomy around a loop $\tau\in LLX$ vanishes whenever the associated map $\tau^{\vee} \maps S^1 \times S^1 \to X$ is of rank one.
\end{definition}

In other words, if $\gamma_1$ and $\gamma_2$ are thin paths in $LX$ with $\gamma_1(0)=\gamma_2(0)=\tau_0$ and $\gamma_1(1)\eq\gamma_2(1)=\tau_1$, then the parallel transport maps $\tau_{\gamma_1}: P_{\tau_0} \to P_{\tau_1}$ and $\tau_{\gamma_2}: P_{\tau_0} \to P_{\tau_1}$ coincide \cite[Lemma 2.5]{waldorf10}. Hence, a thin connection determines a map $d^{\omega}_{\tau_0,\tau_1}: P_{\tau_0} \to P_{\tau_1}$ between the fibres of $P$ over thin homotopic loops $\tau_1$ and $\tau_2$, \emph{independent} of the choice of the thin path between $\tau_0$ and $\tau_1$.

\begin{lemma}
\label{lem:thinstr}
The maps $d^{\omega}_{\tau_0,\tau_1}$ parameterized by pairs $(\tau_0,\tau_1) \in \thinpairs {LX}$ form an almost-thin structure $d^{\omega}$ on $P$.
\end{lemma}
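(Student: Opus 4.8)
The plan is to verify that the family $d^\omega = (d^\omega_{\tau_0,\tau_1})_{(\tau_0,\tau_1) \in \thinpairs{LX}}$ satisfies the two requirements of Definition \ref{def:catalmost}: that it assembles into a \emph{smooth} bundle morphism $d^\omega : \mathrm{pr}_1^{*}P \to \mathrm{pr}_2^{*}P$ over $\thinpairs{LX}$, and that it obeys the cocycle condition $d^\omega_{\tau_2,\tau_3} \circ d^\omega_{\tau_1,\tau_2} = d^\omega_{\tau_1,\tau_3}$ for every triple of thin homotopic loops. The cocycle condition is the easy part: given thin paths $\gamma$ from $\tau_1$ to $\tau_2$ and $\gamma'$ from $\tau_2$ to $\tau_3$ in $LX$, their concatenation $\gamma' \pcomp \gamma$ is again a thin path (the rank of $(\gamma' \pcomp \gamma)^{\vee}$ is controlled pointwise, so rank one is preserved), and parallel transport is functorial under concatenation of paths, hence $\tau_{\gamma' \pcomp \gamma} = \tau_{\gamma'} \circ \tau_\gamma$. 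Since by the discussion preceding the lemma $d^\omega_{\tau_i,\tau_j}$ equals $\tau$ along \emph{any} thin path and is independent of the choice, the cocycle identity follows. (One should also note $d^\omega_{\tau,\tau} = \id$, using the constant path, which is thin.)

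The substantive point is smoothness. Fix a plot $c : U \to \thinpairs{LX}$; by the very definition of the diffeology on $\thinpairs{LX}$, every $u_0 \in U$ has an open neighbourhood $W$ together with a smooth map $h : W \to PLX$ such that, for each $w \in W$, the path $h(w)$ is a thin path from $\mathrm{pr}_1(c(w))$ to $\mathrm{pr}_2(c(w))$. Adjointly, $h$ corresponds to a smooth map $\exd h : W \times [0,1] \to LX$, equivalently $W \times [0,1] \times S^1 \to X$, which we may pull $P$ back along. The claim is then that on $W$ the morphism $d^\omega \circ c$ agrees with the parallel transport of $\omega$ along the smooth family of paths $h(w)$. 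This reduces the problem to the statement that parallel transport in a diffeological bundle with connection depends smoothly on a smooth family of paths — i.e.\ that the assignment $(w \mapsto \tau_{h(w)})$ is a smooth bundle morphism $\mathrm{ev}_0^{*}P \to \mathrm{ev}_1^{*}P$ over $W$. This is exactly the smoothness property of parallel transport established for diffeological bundles in Part~I/II; I would cite the relevant statement (in the spirit of \cite[Lemma 2.5]{waldorf10} and the parallel transport formalism of \cite[Section 3]{waldorf9}) rather than reprove it. Since smoothness is a local condition on the domain $U$ of the plot, and the local representatives agree on overlaps (again by path-independence of $d^\omega$), they glue to a globally defined smooth morphism.

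The main obstacle, and the reason the lemma is not entirely trivial, is precisely the mismatch between the diffeology on $\thinpairs{LX}$ and the subspace diffeology from $LX \times LX$: one cannot globally choose a thin homotopy realizing a given plot $c$, only locally (this is the content of the unresolved assertion \erf{eq:assertion}). So the argument must be genuinely local in the plot variable, and its correctness hinges on the two facts that (a) locally a thin homotopy always exists, by construction of the plots, and (b) $d^\omega$ does not depend on which thin homotopy is chosen, by thinness of $\omega$. Once these two are in hand, the gluing is routine and the cocycle condition is formal. I would also remark in passing that the resulting $d^\omega$ is automatically compatible with the $A$-action, since parallel transport in a principal bundle is $A$-equivariant, so it is indeed a \emph{principal} bundle morphism, completing the verification that $(P, d^\omega)$ is an object of $\bun A {\thingrpd LX}$.
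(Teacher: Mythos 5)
Your proposal is correct and follows essentially the same route as the paper: locally choosing a smooth family of thin paths via the definition of the diffeology on $\thinpairs{LX}$, invoking the smoothness of parallel transport from Part I (the paper cites \cite[Proposition 3.2.10 (d)]{waldorf9}), and deducing the cocycle condition from functoriality of parallel transport under composition together with path-independence. The only cosmetic difference is that the paper checks smoothness on plots of $\mathrm{pr}_1^{*}P$ rather than of $\thinpairs{LX}$, which amounts to the same verification.
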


\begin{proof}
Let $c:U \to \mathrm{pr}_1^{*}P$ be a plot, i.e. the two prolongations $c_P: U \to P$ and $c_b: U \to \thinpairs{LX}$ are smooth. Let $u\in U$. By definition of the diffeology on $\thinpairs {LX}$ the point $u$ has an open neighbourhood $W$ with a smooth map $\gamma: W \to PLX$, such that $\gamma(w)$ is a thin path connecting the loops $\gamma(w)(0)$ and $\gamma(w)(1)$. The parallel transport of the connection $\omega$ can be seen as a smooth map
\begin{equation*}
\tau^{\omega}: W \lli{\ev_0}\times_{p} P \to P\text{,}
\end{equation*}
see \cite[Proposition 3.2.10 (d)]{waldorf9}.
The composite
\begin{equation*}
\alxydim{@C=1.7cm}{W \ar[r]^-{\mathrm{diag}} & W \times W \ar[r]^-{c|_W \times \gamma} & \mathrm{pr}_1^{*}P \times PLX \ar[r]^-{\tau^{\omega}} & \mathrm{pr}_2^{*}P }
\end{equation*}
is a smooth map and coincides with $d^{\omega} \circ c|_W$. Thus, the latter is smooth, and this shows that $d^{\omega}$ is smooth. 
Since parallel transport is functorial under the composition of paths, it is clear that $d^{\omega}$ satisfies the cocycle condition. 
\end{proof}

Almost-thin structures that can be obtained via Lemma \ref{lem:thinstr} play an important role in this article.

\begin{definition}
\label{def:integability}
An almost-thin structure $d$ on a principal $A$-bundle $P$ over $LX$ is called \emph{integrable} or \emph{thin structure}, if there exists a thin connection $\omega$ on $P$ such that $d=d^{\omega}$. 
\end{definition}

Bundles over $LX$ with thin structures form a full subcategory of $\bun A {\thingrpd LX}$ that we denote by $\bunth A {LX}$, i.e. the morphisms in $\bunth A {LX}$  are thin bundle morphisms.

So far we have seen that almost-thin structures can be obtained from thin connections, and that these are (by definition) integrable. Next we relate the notion of a thin structure to the one of a superficial connection \cite[Definition 2.4]{waldorf10}. A connection on a principal $A$-bundle over $LX$ is called \emph{superficial}, if the following two conditions are satisfied:
\begin{enumerate}
\item 
it is thin in the sense of Definition \ref{def:thin};

\item
if $\tau_1,\tau_2\in LLX$ are loops in $LX$ so that their associated maps $\exd{\tau_k}: S^1 \times S^1 \to X$ are rank-two-homotopic, then $\tau_1$ and $\tau_2$ have the same holonomy. 
\end{enumerate}
We denote by $\bunconsf A {LX}$  the groupoid of principal $A$-bundles over $LX$ with superficial connections. 
In particular, since every superficial connection is thin, Lemma \ref{lem:thinstr} defines a functor
\begin{equation}
\label{eq:sufi}
\sufi:\bunconsf A {LX} \to \bunth A {LX}:(P,\omega) \mapsto (P,d^{\omega})\text{.}
\end{equation}

We shall make clear that the functor $\sufi$ loses information. More precisely, we show that \emph{different} superficial connections may define the \emph{same} thin structure.  To start with, we recall that to any $k$-form $\eta \in \Omega^k(X)$ one can associate a $(k-1)$-form $L\eta \in \Omega^{k-1}(LX)$ defined by
\begin{equation*}
L\eta := \int_{S^1} \ev^{*}\eta\text{,}
\end{equation*}
where $\ev: S^1 \times LX \to X$ is the evaluation map. We call a 1-form on $LX$ \emph{superficial}, if it is superficial when considered as a connection on the trivial bundle over $LX$.

\begin{lemma}
Let $\eta \in \Omega^2(X)$. Then, $L\eta$ is superficial. 
\end{lemma}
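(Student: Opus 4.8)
The plan is to verify the two defining conditions of a superficial connection (Definition 2.4 of \cite{waldorf10}, recalled just above) directly for the $1$-form $L\eta$ on $LX$, treating it as a connection on the trivial bundle $LX \times A$. Since $\eta$ is a genuine $2$-form, the key computational input is the standard transgression identity relating the holonomy of $L\eta$ around a loop $\tau \in LLX$ to the integral of $\eta$ over the associated torus $\exd\tau \maps S^1 \times S^1 \to X$. Concretely, for the trivial connection defined by $L\eta$, the holonomy around $\tau$ is $\exp\!\big(\int_{S^1} \tau^{*} L\eta\big) = \exp\!\big(\int_{S^1 \times S^1} (\exd\tau)^{*}\eta\big)$, where I use that $\int_{S^1}\tau^{*}L\eta = \int_{S^1}\int_{S^1}(\exd\tau)^{*}\eta$ by Fubini and the definition $L\eta = \int_{S^1}\ev^{*}\eta$. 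This reduces everything to statements about $\int_{T^2}(\exd\tau)^{*}\eta$.

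First I would check thinness (condition 1 of Definition \ref{def:thin}): if $\exd\tau \maps S^1 \times S^1 \to X$ is of rank one, then $(\exd\tau)^{*}\eta = 0$ as a $2$-form, because pulling back a $2$-form along a map whose differential has rank $\le 1$ everywhere annihilates it (the pullback of any $2$-form factors through $\Lambda^2$ of the image of the differential, which is zero). Hence $\int_{S^1 \times S^1}(\exd\tau)^{*}\eta = 0$ and the holonomy is trivial, so $L\eta$ is thin. For diffeological targets $X$ one invokes the generalized notion of rank from \cite[Definition 2.1.1]{waldorf9} together with the characterization of $L$ via pullback along plots, but the essential point is the same vanishing.

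Second, for condition 2: suppose $\tau_1,\tau_2 \in LLX$ have associated torus maps $\exd{\tau_1},\exd{\tau_2} \maps T^2 \to X$ that are rank-two-homotopic, i.e. there is a homotopy $H \maps [0,1]\times T^2 \to X$ with $H_0 = \exd{\tau_1}$, $H_1 = \exd{\tau_2}$, and (crucially) $H$ of rank at most two. Then $H^{*}(d\eta) = H^{*}(d\eta)$ is a $3$-form pulled back along a rank-$\le 2$ map, hence vanishes; so $d(H^{*}\eta) = H^{*}(d\eta) = 0$ and $H^{*}\eta$ is closed on $[0,1]\times T^2$. By Stokes applied to $[0,1]\times T^2$ (whose boundary is $\{1\}\times T^2 \sqcup \{0\}\times T^2$ with opposite orientations, the $T^2$-boundary pieces cancelling since $T^2$ is closed), $\int_{T^2}(\exd{\tau_2})^{*}\eta - \int_{T^2}(\exd{\tau_1})^{*}\eta = \int_{[0,1]\times T^2} d(H^{*}\eta) = 0$. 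Therefore the holonomies of $L\eta$ around $\tau_1$ and $\tau_2$ agree, which is condition 2. Both conditions being satisfied, $L\eta$ is superficial.

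The only genuine obstacle is bookkeeping in the diffeological setting: one must make sure that "rank at most two" for $H$ is exactly the hypothesis that makes $H^{*}(d\eta)=0$, that Stokes' theorem and Fubini for the $L$-operation are available in the form proved (or cited) in \cite{waldorf9,waldorf10}, and that the torus maps $\exd{\tau_k}$ are the correct adjoints. None of this is deep; it is a matter of assembling the transgression formula for holonomy, the pullback-kills-high-degree-forms lemma for low-rank maps, and Stokes. If $X$ is an honest manifold all of this is classical; for diffeological $X$ one replaces pointwise rank arguments by the plot-wise formulation, which the paper has already set up.
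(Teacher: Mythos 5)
Your argument is correct and is essentially the direct calculation the paper alludes to: the paper's own (one-line) proof invokes exactly the same key fact, namely that the pullback of a $k$-form along a map of rank less than $k$ vanishes (\cite[Lemma A.3.2]{waldorf9}), applied to the rank-one torus map for thinness and, together with Stokes on $[0,1]\times S^1\times S^1$, to the rank-two homotopy for the second condition. You have simply written out the details that the paper leaves to the reader.
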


\begin{proof}
This can be verified via a direct calculation using that the pullback of a $k$-form along a map of rank less than $k$ vanishes (see \cite[Lemma A.3.2]{waldorf9} for a proof of this claim in the diffeological setting). 
\end{proof}

\begin{proposition}
\label{prop:sftrivial}
Suppose $\eta \in \Omega^2(X)$, and consider the 1-form $L\eta$ as a superficial connection on the trivial bundle $\trivlin$ over $LX$.  Then, the induced thin structure $d^{L\eta}$ is the trivial one, i.e. given by the identity bundle morphism $\id: \mathrm{pr}_1^{*}\trivlin \to \mathrm{pr}_2^{*}\trivlin$ over $\thinpairs {LX}$. 
\end{proposition}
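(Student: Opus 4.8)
The plan is to compute the bundle morphism $d^{L\eta}_{\tau_0,\tau_1}$ directly from the definition in Lemma \ref{lem:thinstr} and to reduce the claim to the vanishing of $L\eta$ pulled back along thin paths in $LX$. Concretely, I would fix a pair of thin homotopic loops $(\tau_0,\tau_1)$ and choose a thin path $\gamma \in PLX$ from $\tau_0$ to $\tau_1$; such a $\gamma$ exists by the definition of $\thinpairs{LX}$, and since $L\eta$ is thin (by the preceding lemma it is even superficial), the map $d^{L\eta}_{\tau_0,\tau_1}$ is the parallel transport of $L\eta$ along $\gamma$, independent of this choice (Definition \ref{def:thin}, Lemma \ref{lem:thinstr}). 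On the trivial bundle $\trivlin$ the parallel transport along a path is the identity as soon as the connection $1$-form pulls back to zero along that path, so it suffices to prove that the $1$-form $\gamma^{*}(L\eta)$ on $[0,1]$ vanishes.

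Next I would unwind the transgression $L\eta = \int_{S^1}\ev^{*}\eta$ and use that fibre integration along $S^1$ commutes with pullback (base change): applying $\gamma^{*}$ to the trivial $S^1$-bundle $S^1 \times LX \to LX$ produces $S^1 \times [0,1] \to [0,1]$, and $\ev \circ (\id_{S^1} \times \gamma) = \exd\gamma \circ \mathrm{sw}$, where $\mathrm{sw}: S^1 \times [0,1] \to [0,1] \times S^1$ is the coordinate flip. Hence $\gamma^{*}(L\eta) = \int_{S^1}\mathrm{sw}^{*}(\exd\gamma)^{*}\eta$. Since $\gamma$ is a thin path, the map $\exd\gamma: [0,1] \times S^1 \to X$ has rank at most one, so $(\exd\gamma)^{*}\eta = 0$ because $\eta$ is a $2$-form and the pullback of a $k$-form along a map of rank less than $k$ vanishes (\cite[Lemma A.3.2]{waldorf9}). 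Therefore $\gamma^{*}(L\eta) = 0$, the parallel transport of $L\eta$ along $\gamma$ is the identity, and $d^{L\eta}_{\tau_0,\tau_1} = \id$ for every pair of thin homotopic loops; that is, $d^{L\eta}$ is the trivial almost-thin structure $\id: \mathrm{pr}_1^{*}\trivlin \to \mathrm{pr}_2^{*}\trivlin$ over $\thinpairs{LX}$.

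The only mildly delicate point is the base-change identity $\gamma^{*}\circ\int_{S^1} = \int_{S^1}\circ(\id_{S^1}\times\gamma)^{*}$ in the diffeological category, together with the minor bookkeeping of sitting instants and of the flip relating $\ev\circ(\id_{S^1}\times\gamma)$ to $\exd\gamma$; the rank argument itself, which is the conceptual content, is immediate from the cited lemma. I do not expect any serious obstacle: once the transgression $L\eta$ is written out, the statement is essentially a one-line consequence.
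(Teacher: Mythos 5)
Your proposal is correct and follows essentially the same route as the paper: fix a thin path $\gamma$ between the two loops, note that the parallel transport of $L\eta$ on the trivial bundle is governed by $\gamma^{*}(L\eta)$, rewrite this via the fibre-integration/base-change identity as an integral of $(\exd\gamma)^{*}\eta$, and conclude by the rank-one pullback lemma \cite[Lemma A.3.2]{waldorf9}. The only cosmetic difference is that you show $\gamma^{*}(L\eta)$ vanishes pointwise, while the paper only records the vanishing of $\int_{[0,1]}\gamma^{*}L\eta = \int_{[0,1]\times S^{1}}(\exd\gamma)^{*}\eta$; the substance is identical.
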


\begin{proof}
Let $(\tau_0,\tau_1) \in \thinpairs {LX}$, and let $\gamma \in PLX$ be a thin path connecting $\tau_0$ with $\tau_1$, i.e. the adjoint map $\exd\gamma: [0,1] \times S^1 \to X$ has rank one. The parallel transport in $\trivlin_{L\eta}$ along $\gamma$ is given by multiplication with
\begin{equation*}
\exp \left ( \int_{[0,1]} \gamma^{*}L\eta \right ) = \exp \left (  \int_{[0,1] \times S^1} (\exd\gamma)^{*}\eta \right) =1\text{;}
\end{equation*}
hence the induced thin structure is trivial. 
\end{proof}

In particular, the trivial thin structure on the trivial bundle over $LX$ is induced by \emph{any} 2-form $\eta \in \Omega^2(X)$. This shows  that a lot of structure is lost when passing from a superficial connection to a thin structure.

One suggestion for further research is to find obstructions against the integrability of an almost-thin structure that  makes no direct use of connections  (in contrast to Definition \ref{def:integability}). It might also be possible that there is no obstruction, i.e. that every almost-thin structure is integrable; so far I was unable to prove or disprove this.  

\subsection{Fusion Products}

\label{sec:fus}

We use the smooth maps 
\begin{equation*}
\ev: \p X \to X \times X: \gamma \mapsto (\gamma(0),\gamma(1))
\quomma
l: \p X^{[2]} \to L X: (\gamma_1,\gamma_2) \mapsto \prev{\gamma_2} \pcomp \gamma_1\text{,}
\end{equation*}
where $PX^{[k]}$ denotes the $k$-fold fibre product of $PX$ over $X \times X$  \cite[Section 2]{waldorf9}. Like in Section \ref{sec:maintheorem}, we let $e_{ij}: PX^{[3]} \to LX$ denote the map $l \circ \mathrm{pr}_{ij}$, with $\mathrm{pr}_{ij}:PX^{[3]} \to PX^{[2]}$ the projections. We recall the following.

\begin{definition}[{{\cite[Definition 2.1.3]{waldorf10}}}]
Let $P$ be a principal $A$-bundle over $LX$.
A \emph{fusion product} on $P$ is a bundle morphism
\begin{equation*}
\lambda: e_{12}^{*}P \otimes e_{23}^{*}P \to e_{13}^{*}P
\end{equation*}
over $PX^{[3]}$ that is associative over $PX^{[4]}$. A bundle equipped with a fusion product is called \emph{fusion bundle}. A bundle isomorphism is called \emph{fusion-preserving}, if it commutes with the fusion products. \end{definition}

There are two compatibility conditions between a connection on a principal $A$-bundle $P$ and a fusion product $\lambda$. We say that the connection is \emph{compatible} with $\lambda$ if the isomorphism $\lambda$ preserves connections. We say \cite[Definition 2.9]{waldorf10} that the connection \emph{symmetrizes} $\lambda$, if 
\begin{equation}
\label{eq:symmetrizing}
R_{\pi}(\lambda(q_1 \otimes q_2)) = \lambda(R_{\pi}(q_2) \otimes R_{\pi}(q_1))
\end{equation}
for all $(\gamma_1,\gamma_2,\gamma_3) \in PX$ and all $q_1 \in P_{l(\gamma_1,\gamma_2)}$ and $q_2\in P_{l(\gamma_2,\gamma_3)}$, where $R_{\pi}$ is the parallel transport along the rotation by an angle of $\pi$, regarded as a path in $LX$. The reader may check that \erf{eq:symmetrizing} makes sense. Condition \erf{eq:symmetrizing} can be seen as a weakened  commutativity condition for $\lambda$ \cite[Remark 2.1.6]{waldorf10}.

A \emph{fusion bundle with connection}  is by definition a principal $A$-bundle over $LX$ equipped with a fusion product and a compatible, symmetrizing connection. Morphisms between fusion bundles with connection are fusion-preserving, connection-preserving bundle morphisms. This defines the groupoid $\fusbuncon A {LX}$. Requiring additionally that the connections are superficial defines a full subgroupoid $\fusbunconsf A {LX}$; this groupoid appears prominently  in \cite{waldorf10}.

\begin{definition}
\label{def:compandsym}
Let $(P,\lambda)$ be a fusion bundle. A thin structure $d$ on $P$ is called \uline{compatible and symmetrizing} if there exists an integrating connection that is compatible and symmetrizing.
\end{definition}

A compatible and symmetrizing thin structure $d$ satisfies, in particular, the following two conditions, whose formulation is independent of an integrating connection:
\begin{enumerate}[(a)]
\item 
for all  paths $\Gamma \in P(\px Xx^{[3]})$ with $(\gamma_1,\gamma_2,\gamma_3) := \Gamma(0)$ and $(\gamma_1',\gamma_2',\gamma_3') := \Gamma(1)$ such that the three paths $Pe_{ij}(\Gamma) \in PLX$ are  thin, the diagram
\begin{equation*}
\alxydim{@=1.5cm}{P_{l(\gamma_1,\gamma_2)} \ar[d]_{d \otimes d} \otimes P_{l(\gamma_2,\gamma_3)} \ar[r]^-{\lambda} & P_{l(\gamma_1,\gamma_3)} \ar[d]^{d} \\P_{l(\gamma'_1,\gamma'_2)} \otimes P_{l(\gamma'_2,\gamma'_3)} \ar[r]_-{\lambda} & P_{l(\gamma'_1,\gamma'_3)}}
\end{equation*}
is commutative. 
\item
for all $(\gamma_1,\gamma_2,\gamma_3) \in PX^{[3]}$ and all $q_1\in P_{l(\gamma_1,\gamma_2)}$ and $q_2 \in P_{l(\gamma_2,\gamma_3)}$
\begin{equation*}
\pi^{*}d(\lambda(q_1 \otimes q_2)) = \lambda(\pi^{*}d(q_2) \otimes \pi^{*}d(q_1))\text{,}
\end{equation*}
where $\pi: PX^{[2]} \to \thinpairs {LX}$ is defined by $\pi(\gamma_1,\gamma_2) := (l(\gamma_1,\gamma_2),l(\prev{\gamma_2},\prev{\gamma_1}))$. The relation to \erf{eq:symmetrizing} arises because the two loops in the image of $\pi$ are related by a rotation by an angle of $\pi$. 
\end{enumerate}
Conversely, if an almost-thin structure on a fusion bundle $P$ satisfies (b), and $\omega$ is an integrating thin connection, then $\omega$ is automatically symmetrizing. This follows because the condition of being symmetrizing  only involves the parallel transport of $\omega$ along \emph{thin} paths, and these parallel transports are determined by the almost-thin structure $d$. I do not know whether or not a similar statement holds for (a); at the moment it seems that Definition \ref{def:compandsym}  is  stronger than (a) and (b). 

According to Definition \ref{def:fusbunth}, a \emph{thin fusion bundle} is a fusion bundle over $LX$ with a compatible and symmetrizing thin structure. Thin fusion bundles form a category that we denote by $\fusbunth A {LX}$. Evidently, the functor $\sufi$ from \erf{eq:sufi} passes to the setting with fusion products:
\begin{equation*}
\sufi: \fusbunconsf A {LX} \to \fusbunth A {LX} : (P,\lambda,\omega) \mapsto (P,\lambda,d^{\omega})\text{.}
\end{equation*}

We recall that the main result of  \cite{waldorf10} is that the category $\fusbunconsf A {LM}$ of fusion bundles with superficial connection (over a connected smooth manifold $M$)
is equivalent to the category $\hc 1 \diffgrbcon A M$ of  diffeological bundle gerbes with connection over $M$ via a \emph{transgression functor}
\begin{equation*}
\trcon: \hc 1 \diffgrb A {M} \to \fusbunconsf A{LM}\text{.}
\end{equation*}
 As a preparation for Section \ref{sec:proof} we need the following result about the composition of $\trcon$ with the functor $\sufi$.

Let $\eta \in \Omega^2_{\mathfrak{a}}(M)$ be a 2-form on $M$ with values in the Lie algebra $\mathfrak{a}$ of $A$, and let $\mathcal{I}_{\eta}$ be the trivial gerbe over $M$ with connection $\eta$. There is a fusion-preserving, connection-preserving bundle morphism
\begin{equation*}
\sigma: \trcon(\mathcal{I}_{\eta}) \to \trivlin_{-L\eta}
\end{equation*} 
constructed in \cite[Lemma 3.6]{waldorf13}, where the trivial bundle $\trivlin$ is equipped with the superficial connection 1-form $-L\eta$ and with the trivial fusion product induced by the multiplication in $A$. 

\begin{proposition}
\label{prop:sufitriv}
The bundle morphism $\sigma$ induces a fusion-preserving, thin bundle morphism
\begin{equation*}
(\sufi \circ \trcon)(\mathcal{I}_{\eta}) \cong \trivlin\text{,}
\end{equation*}
where the trivial bundle $\trivlin$ is equipped with the trivial fusion product and the trivial thin structure. 
\end{proposition}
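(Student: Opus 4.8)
The statement is a formal bookkeeping consequence of the morphism $\sigma$ of \cite[Lemma 3.6]{waldorf13} together with Proposition~\ref{prop:sftrivial}. First I would record the ambient data. Since $\trcon$ takes values in $\fusbunconsf A{LM}$, the fusion bundle $\trcon(\mathcal{I}_{\eta})$ carries a superficial, compatible, symmetrizing connection. On the other side, $-L\eta = L(-\eta)$ with $-\eta\in\Omega^2(M)$, so by the lemma recalled just before Proposition~\ref{prop:sftrivial} the $1$-form $-L\eta$ is superficial; equipped with it and with the trivial fusion product induced by the multiplication in $A$, the trivial bundle $\trivlin_{-L\eta}$ is an object of $\fusbunconsf A{LM}$. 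The map $\sigma\maps \trcon(\mathcal{I}_{\eta}) \to \trivlin_{-L\eta}$ is fusion-preserving and connection-preserving, and since it covers the identity of $LM$ it is an isomorphism (as every bundle morphism of principal bundles over the same base is). Hence $\sigma$ is an isomorphism in the groupoid $\fusbunconsf A{LM}$.

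Next I would simply apply the functor $\sufi\maps \fusbunconsf A{LM} \to \fusbunth A{LM}$ introduced in Section~\ref{sec:fus} to $\sigma$. By definition $\sufi$ leaves the underlying bundle and the fusion product untouched and replaces the superficial connection $\omega$ by the thin structure $d^{\omega}$ (Lemma~\ref{lem:thinstr}); that it is indeed a functor entails that it carries a fusion-preserving, connection-preserving bundle morphism to a fusion-preserving, thin bundle morphism. The one point worth double-checking here is that a connection-preserving $\varphi$ is carried to a thin morphism: this is immediate from the functoriality of parallel transport, since $\varphi$ intertwines the parallel transports of the two connections along every (thin) path and therefore intertwines $d^{\omega_1}$ with $d^{\omega_2}$. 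Applying this to $\sigma$ yields a fusion-preserving, thin bundle isomorphism
\begin{equation*}
\sufi(\sigma)\maps (\sufi\circ\trcon)(\mathcal{I}_{\eta}) \to \sufi(\trivlin_{-L\eta}) = (\trivlin,\lambda_0,d^{-L\eta})\text{,}
\end{equation*}
where $\lambda_0$ denotes the trivial fusion product induced by the multiplication in $A$.

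Finally I would identify $d^{-L\eta}$. Applying Proposition~\ref{prop:sftrivial} with $-\eta\in\Omega^2(M)$ in place of $\eta$, and using $L(-\eta) = -L\eta$, shows that the thin structure induced by $-L\eta$ on $\trivlin$ is the trivial one, i.e. the identity morphism $\id\maps \mathrm{pr}_1^{*}\trivlin \to \mathrm{pr}_2^{*}\trivlin$ over $\thinpairs{LM}$. Thus $\sufi(\trivlin_{-L\eta})$ is exactly the trivial bundle equipped with the trivial fusion product and the trivial thin structure, and $\sufi(\sigma)$ is the asserted isomorphism. I do not anticipate a genuine obstacle in this argument: all the substance already resides in \cite[Lemma 3.6]{waldorf13} and in Proposition~\ref{prop:sftrivial}, and the only care required is to match the notions of morphism in the various (2-)categories so that $\sufi(\sigma)$ is automatically of the required type.
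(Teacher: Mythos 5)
Your proposal is correct and follows essentially the same route as the paper: apply the functor $\sufi$ to the isomorphism $\sigma$ of \cite[Lemma 3.6]{waldorf13} and then invoke Proposition \ref{prop:sftrivial} to identify the thin structure on $\trivlin_{-L\eta}$ as the trivial one. Your extra checks (that $-L\eta=L(-\eta)$ is superficial, and that $\sufi$ sends connection-preserving morphisms to thin ones via functoriality of parallel transport) are exactly what the paper's appeal to \quot{functoriality} leaves implicit.
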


\begin{proof}
By functoriality $\sufi(\trcon(\mathcal{I}_{\eta}))$ is isomorphic to $\sufi(\trivlin_{-L\eta})$ via $\sufi(\sigma)$. By Proposition \ref{prop:sftrivial}, the latter has the trivial thin structure. 
\end{proof}

\subsection{Homotopy Categories of Bundles}

\label{sec:homcat}

It is familiar for topologists to consider the \emph{homotopy category} of topological spaces (or of other model categories). Here we do the same thing with categories of bundles over $LX$.

We have the category $h \bun A {LX}$ of principal $A$-bundles over $LX$ and  homotopy classes of bundle morphisms. Here, a homotopy between two  bundle morphisms $\varphi_0,\varphi_1: P \to P'$ is a smooth map $h: [0,1] \times P \to P'$ so that $h_0 = \varphi_0$ and $h_1 = \varphi_1$, and $h_t$ is a bundle morphism for all $t$. We will often identify the homotopy $h$ with its \quot{difference from $\varphi_0$}, i.e. with the smooth map  $h' \maps [0,1] \nobr\times\nobr LX \to A$ defined by $h_t = \varphi_0 \cdot h'_t$. 

Now we take additional structure into account. We have the category $h\fusbun A {LX}$ of fusion bundles over $LX$ and homotopy classes of fusion-preserving bundle morphisms. Here, the homotopies $h$ satisfy -- in addition to the above -- the condition that $h_t$ is fusion-preserving for all $t$. Correspondingly, the difference maps $h'$ are such that $h'_t$ satisfies the fusion condition for all $t$, i.e.
\begin{equation*}
h_t'(l(\gamma_1,\gamma_2)) \cdot h_t'(l(\gamma_2,\gamma_3)) = h_t'(l(\gamma_1,\gamma_3))
\end{equation*} 
for all triples $(\gamma_1,\gamma_2,\gamma_3) \in PX^{[3]}$.

Next we look at the category $h \bunth A {LX}$ of thin principal $A$-bundles over $LX$ together with homotopy classes of thin bundle morphisms. That is, the homotopies $h$ are such that $h_t$  is thin for all $t$. We claim that the corresponding maps $h'_t$ are such that $h_t'(\tau_0) = h_t'(\tau_1)$ for thin homotopic loops $\tau_0,\tau_1$. 
In other words, $h'$ is a smooth map $h': [0,1] \times \mathcal{L}M \to A$ defined on the \emph{thin} loop space.
In order to prove that claim, suppose that $\varphi_0,\varphi_1: P \to P'$ are thin bundle morphisms, and $h':[0,1] \times LX \to A$ is a smooth map with $h'_0 \equiv 1$ and $\varphi_1 = \varphi_0 \cdot h'_1$. Let $\tau_0$ and $\tau_1$ be thin homotopic loops, and let   $p \in P_{\tau_0}$. Then, we get
\begin{multline*}
d_{\tau_0,\tau_1}(\varphi_0(p))  \cdot h'_t(\tau_0)
= d_{\tau_0,\tau_1}(\varphi_0(p) \cdot h'_t(\tau_0)) 
= d_{\tau_0,\tau_1}(h_t(p))
= h_t(d_{\tau_0,\tau_1}(p))
\\= \varphi_0(d_{\tau_0,\tau_1}(p)) \cdot h_t'(\tau_1)
= d_{\tau_0,\tau_1}(\varphi_0(p)) \cdot h_t'(\tau_1)\text{.}
\end{multline*}

Combining the last two paragraphs, we arrive at Definition \ref{def:hfusbunth} of Section \ref{sec:results}, the homotopy category $h\fusbunth A {LX}$ of thin fusion bundles over $LX$, consisting of thin fusion bundles and  homotopy classes of fusion-preserving, thin bundle morphisms. The maps difference maps $h'$ are smooth maps $h': [0,1] \times \mathcal{L}X \to A$ that are fusion maps for all $t$; such maps have been called \emph{fusion homotopies} in \cite{waldorf9}. The homotopy category $h\fusbunth A {LX}$ plays a central role in this paper, it is the category on the right hand side of Theorem \ref{th:mainA}. For the proof of Theorem \ref{th:mainA} in Section \ref{sec:proof} we provide the following result.

\begin{proposition}
\label{prop:homtorsor}
The Hom-sets in the homotopy category $h\fusbunth A{LX}$ are either empty or torsors over the group $h \fus A{\mathcal{L}X}$ of homotopy classes of fusion maps.
\end{proposition}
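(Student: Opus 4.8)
The plan is to compute the Hom-set between two thin fusion bundles $P$ and $P'$ by the usual ``difference from a fixed morphism'' device for principal bundles, and then to track how the fusion product, the thin structure, and the passage to homotopy classes successively shrink the torsor group from all smooth maps $LX\to A$ down to $h\fus A{\mathcal{L}X}$. Concretely, assume the Hom-set in $h\fusbunth A{LX}$ is nonempty and fix a fusion-preserving thin bundle morphism $\varphi_0\maps P\to P'$. For any fusion-preserving thin bundle morphism $\varphi\maps P\to P'$ there is a unique smooth map $g_\varphi\maps LX\to A$ with $\varphi=\varphi_0\cdot g_\varphi$ (smoothness of $g_\varphi$ is verified exactly as for ordinary principal bundles over diffeological spaces, \cite[Section 3]{waldorf9}), and conversely $\varphi_0\cdot g$ is a bundle morphism for every smooth $g\maps LX\to A$. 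Thus $\varphi\mapsto g_\varphi$ exhibits $\mathrm{Hom}_{\bun A{LX}}(P,P')$ as a torsor over $D^{\infty}(LX,A)$, and the task is to decide which $g$ correspond to morphisms in $\fusbunth A{LX}$.

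Next I would translate the two defining properties into conditions on $g$. Writing out that $\varphi_0\cdot g$ is fusion-preserving, using that $\varphi_0$ already is, that the tensor product of $A$-torsors is their contracted product, and that $A$ is abelian, one obtains exactly the fusion equation $g(l(\gamma_1,\gamma_2))\cdot g(l(\gamma_2,\gamma_3))=g(l(\gamma_1,\gamma_3))$ over $PX^{[3]}$, i.e. condition \erf{cond1}. Likewise, chasing the square that defines a thin bundle morphism and using the $A$-equivariance of the almost-thin structure on $P'$, one finds that $\varphi_0\cdot g$ is thin if and only if $g(\tau_0)=g(\tau_1)$ for all $(\tau_0,\tau_1)\in\thinpairs{LX}$, i.e. if and only if $g$ factors through the subduction $LX\to\mathcal{L}X$. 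Combining the two, $\varphi_0\cdot g$ is a fusion-preserving thin morphism precisely when $g\in\fus A{\mathcal{L}X}$; hence $\mathrm{Hom}_{\fusbunth A{LX}}(P,P')$, when nonempty, is a torsor over the group $\fus A{\mathcal{L}X}$ of fusion maps.

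Finally I would pass to homotopy classes. By the computations of Section \ref{sec:homcat}, a homotopy of fusion-preserving thin morphisms from $\varphi_0\cdot g_0$ to $\varphi_0\cdot g_1$ is, taken as its difference from $\varphi_0$, exactly a fusion homotopy $[0,1]\times\mathcal{L}X\to A$ with endpoints $g_0$ and $g_1$. Therefore $\varphi_0\cdot g_0$ and $\varphi_0\cdot g_1$ are homotopic if and only if $g_1 g_0^{-1}$ lies in the subgroup $N\subset\fus A{\mathcal{L}X}$ of fusion maps that are fusion-homotopic to the constant map $1$. Since $A$ is abelian, $N$ is normal, and $\fus A{\mathcal{L}X}/N$ is by definition the group $h\fus A{\mathcal{L}X}$ of homotopy classes of fusion maps (see \cite[Theorem B]{waldorf9}). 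A short check shows that the $\fus A{\mathcal{L}X}$-action on $\mathrm{Hom}_{\fusbunth A{LX}}(P,P')$ respects homotopy (translation by an element of $N$ fixes homotopy classes), so it descends to a free transitive action of $h\fus A{\mathcal{L}X}$ on the quotient $\mathrm{Hom}_{h\fusbunth A{LX}}(P,P')$, which is the assertion.

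There is no serious obstacle here: the whole argument is bookkeeping built on Section \ref{sec:homcat}. The points that require genuine care are the reliable translation of ``fusion-preserving'' and ``thin'' into the two conditions on $g$ above (the tensor-product computation and the square-chase), the smoothness of the difference map $g_\varphi$, and the compatibility of the torsor action with homotopy so that it descends to the quotient group — all of which parallel the corresponding steps in \cite{waldorf9}. The only external input is the identification, via \cite[Theorem B]{waldorf9}, of $\fus A{\mathcal{L}X}/N$ with $h\fus A{\mathcal{L}X}$.
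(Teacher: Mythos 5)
Your proposal is correct and follows essentially the same route as the paper's proof: the difference-map device identifying morphisms $\varphi=\varphi_0\cdot g$ via \cite[Lemma 3.1.3]{waldorf9}, the translation of \emph{fusion-preserving} and \emph{thin} into the fusion condition and constancy on thin homotopy classes for $g$, and the identification of homotopies of morphisms with fusion homotopies from Section \ref{sec:homcat}. The only difference is organizational -- you first exhibit the strict Hom-set as a torsor over $\fus A{\mathcal{L}X}$ and then quotient by the null-homotopic fusion maps, whereas the paper defines the $h\fus A{\mathcal{L}X}$-action directly on homotopy classes and verifies freeness and transitivity -- which amounts to the same computations.
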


\begin{proof}
Let $f: \mathcal{L}X \to A$ be a fusion map,  let $(P_1,\lambda_1,d_1)$ and $(P_2,\lambda_2,d_2)$ be thin fusion bundles, and let $\varphi:P_1 \to P_2$ be a thin, fusion-preserving bundle morphism. We define 
\begin{equation*}
(\varphi \cdot f)(p_1) := \varphi(p_1)\cdot f(\tau)
\end{equation*}
for all $p_1 \in P_1|_{\tau}$ and prove that this gives the claimed free and transitive action. First of all, it is easy to check that the action is well-defined, i.e. that the homotopy class of $\varphi \cdot f$ depends only on the homotopy classes of $\varphi$ and $f$, 
and that $\varphi \cdot f$ is a fusion-preserving, thin bundle morphism. 

The action is free: suppose $\varphi$ is a thin, fusion-preserving bundle morphism, $f$ is a fusion map, and  $h': [0,1] \times \mathcal{L}X \to A$ parameterizes a homotopy between $\varphi \cdot f$ and $\varphi$. But then, the same $h'$ is a fusion homotopy between $f$ and the constant map $1$, so that $f=1$ in $\fushom A{\mathcal{L}X}$. 

The action is transitive: suppose $\varphi,\varphi': P \to P'$ are fusion-preserving, thin bundle morphisms. General bundle theory \cite[Lemma 3.1.3]{waldorf9} provides a unique smooth map $f: LX \to A$ such that $\varphi' = \varphi \cdot f$. That $\varphi$ and $\varphi'$ are fusion-preserving implies that $f$ is a fusion map. That $\varphi$ and $\varphi'$ are thin implies that $f$ is constant on thin homotopy classes of loops:
\begin{multline*}
d'_{\tau_0,\tau_1}(\varphi(p))  \cdot f(\tau_0)
= d'_{\tau_0,\tau_1}(\varphi(p) \cdot f(\tau_0)) 
= d'_{\tau_0,\tau_1}(\varphi'(p))
= \varphi'(d_{\tau_0,\tau_1}(p))
\\= \varphi(d_{\tau_0,\tau_1}(p)) \cdot f(\tau_1)
= d'_{\tau_0,\tau_1}(\varphi(p)) \cdot f(\tau_1)\text{.}
\end{multline*}
Thus, $f$ is a fusion map. 
\end{proof}

\setsecnumdepth{1}

\section{Regression}

\label{sec:regression}

The main ingredient for regression 
are fusion products. In \cite{waldorf10} we have defined a functor
\begin{equation*}
\un_x: \fusbun A{L X} \to \hc 1 \diffgrb A X\text{,}
\end{equation*}
where $x\in X$ is a base point, and $\diffgrb A X$ is the 2-category of diffeological $A$-bundle gerbes over $X$.  This 2-category is discussed in \cite[Section 3.1]{waldorf10}.
Let us  review the construction of the functor $\un_x$. If $(P,\lambda)$ is a fusion bundle over $LX$, the diffeological bundle gerbe $\un_x(P,\lambda)$ consists of the subduction $\ev_1: \px Xx \to X$, the principal $A$-bundle $l^{*}P$ obtained by pullback along $l: \px Xx^{[2]} \to LX$, and the fusion product $\lambda$ as its bundle gerbe product. 
If $\varphi: P_1 \to P_2$ is a fusion-preserving bundle morphism, a 1-isomorphism \begin{equation*}
\un_x(\varphi): \un_x(P_1,\lambda_1) \to \un_x(P_2,\lambda_2)
\end{equation*}
is constructed as follows. Its principal $A$-bundle $Q$ over $\px Xx^{[2]}$ is $l^{*}P_2$, and its bundle isomorphism 
\begin{equation*}
\alpha\maps \mathrm{pr}_{13}^{*}l^{*}P_1 \otimes \mathrm{pr}_{34}^{*}Q \to \mathrm{pr}_{12}^{*}Q \otimes \mathrm{pr}_{24}^{*}l^{*}P_2
\end{equation*}
over $\px Xx^{[4]}$, where $\mathrm{pr}_{ij}: \px Xx^{[4]} \to \px Xx^{[2]}$ denotes the projections, is given by
\begin{equation*}
\alxydim{@=1.3cm}{e_{13}^{*}P_1 \otimes e_{34}^{*}P_2 \ar[r]^-{e_{13}^{*}\varphi \otimes \id} & e_{13}^{*}P_2 \otimes e_{34}^{*}P_2 \ar[r]^-{\mathrm{pr}_{134}^{*}\lambda_2} & e_{14}^{*}P_2 \ar[r]^-{\mathrm{pr}_{124}^{*}\lambda_2^{-1}} & e_{12}^{*}P_2 \otimes e_{24}^{*}P_2\text{,}}
\end{equation*}
where we have used the notation $e_{ij} := l \circ \mathrm{pr}_{ij}$. The compatibility condition between $\alpha$ and the bundle gerbe products $\lambda_1$ and $\lambda_2$ follows from the fact that $\varphi$ is fusion-preserving.

It is straightforward to see that the functor $\un_x$ is monoidal. In particular, if $\trivlin$ denotes the trivial bundle with the trivial fusion product, i.e. the tensor unit in $\fusbun A{LX}$, there is a canonical trivialization $\mathcal{T}: \un_x(\trivlin) \to \mathcal{I}$, where $\mathcal{I}$ denotes the trivial bundle gerbe. Further, the functor $h\un_x$ is natural with respect to base point-preserving smooth maps between diffeological spaces.

Our first aim is to prove that the functor $\un_x$ factors through the homotopy category of fusion bundles; for that purpose we prove the following lemma. 

\begin{lemma}
\label{lem:regequiv}
Let $\varphi: P_1 \to P_2$ be a fusion-preserving morphism between fusion bundles over $LX$, and let $f: \mathcal{L}X \to A$ be a fusion map. Then, there exists a 2-isomorphism
\begin{equation*}
\beta_{f,\varphi}: \un_x(\varphi\cdot f) \Rightarrow \un_x(\varphi) \otimes \un_x(f)\text{,}
\end{equation*}
where $\un_x(f)$ is the principal $A$-bundle over $X$ reconstructed from $f$, and $\otimes$ denotes the action of bundles on 1-isomorphisms between gerbes. 
\end{lemma}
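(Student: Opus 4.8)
The plan is to construct the 2-isomorphism $\beta_{f,\varphi}$ explicitly by unwinding what the three regression 1-isomorphisms are. Recall that $\un_x(\varphi \cdot f)$ and $\un_x(\varphi)$ are both 1-isomorphisms between $\un_x(P_1,\lambda_1)$ and $\un_x(P_2,\lambda_2)$ whose underlying bundle over $\px Xx^{[2]}$ is $l^{*}P_2$ in both cases, and they differ only in their bundle isomorphism $\alpha$ over $\px Xx^{[4]}$ through the factor $e_{13}^{*}\varphi$ versus $e_{13}^{*}(\varphi \cdot f)$. On the other hand, the bundle $\un_x(f)$ over $X$ is the bundle regressed from the fusion map $f$, so that the 1-isomorphism $\un_x(\varphi) \otimes \un_x(f)$ has underlying bundle $l^{*}P_2 \otimes \ev_1^{*}\un_x(f)$ (where $\ev_1 \maps \px Xx \to X$), with its $\alpha$ twisted by the bundle-gerbe structure of $\un_x(f)$ regarded as a morphism.

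First I would recall the concrete description of $\un_x(f)$ from \cite[Section 4.2]{waldorf9}: it is the associated bundle obtained from the principal $A$-bundle $\px Xx \to X$ using the cocycle built from $f$, so that there is a canonical trivialization of $\ev_1^{*}\un_x(f)$ over $\px Xx$ whose transition function over $\px Xx^{[2]}$ is precisely $l^{*}f$ (here I use that $f$ is constant on thin homotopy classes so $l^{*}f$ is well-defined, and that the fusion property of $f$ makes this a genuine cocycle). A 2-isomorphism $\beta_{f,\varphi}$ is then a bundle isomorphism $l^{*}P_2 \to l^{*}P_2 \otimes \ev_1^{*}\un_x(f)$ over $\px Xx^{[2]}$ compatible with the two $\alpha$'s. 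Using the canonical trivialization above, I would take $\beta_{f,\varphi}$ to be $\id \otimes s$, where $s \maps \px Xx^{[2]} \to \ev_1^{*}\un_x(f)$ is the canonical section; equivalently, on fibres, $\beta_{f,\varphi}$ is multiplication by the value of $f$ on the appropriate loop $l(\gamma_i,\gamma_j)$.

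The key step is then to verify the compatibility of this $\beta_{f,\varphi}$ with the bundle isomorphisms $\alpha$ over $\px Xx^{[4]}$, i.e. that the hexagon (or square) defining a 2-morphism of 1-isomorphisms commutes. Writing out $\alpha$ for $\un_x(\varphi \cdot f)$, the factor $e_{13}^{*}(\varphi \cdot f) = e_{13}^{*}\varphi \cdot e_{13}^{*}f = e_{13}^{*}\varphi \cdot (l^{*}f \circ \mathrm{pr}_{13})$ produces an extra multiplicative factor of $f(l(\gamma_1,\gamma_3))$; the $\alpha$ of the tensored 1-isomorphism $\un_x(\varphi)\otimes\un_x(f)$ contributes, via the cocycle of $\un_x(f)$, the factor $f(l(\gamma_1,\gamma_2)) \cdot f(l(\gamma_1,\gamma_4)) \cdot f(l(\gamma_2,\gamma_4))^{-1}$ (or the analogous combination coming from the four indices $1,2,3,4$ and the projections $\mathrm{pr}_{13},\mathrm{pr}_{34},\mathrm{pr}_{12},\mathrm{pr}_{24}$). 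The point is that these two contributions agree: this is exactly the fusion relation \erf{cond1} for $f$ applied to the four paths indexed by $1,2,3,4$, once one notes that $\lambda_1,\lambda_2$ do not interact with these scalar factors since $f$ takes values in the central group $A$. So the commutativity of the square reduces, after cancelling the $\varphi$- and $\lambda$-dependent parts which already commute because $\un_x(\varphi)$ is a 1-isomorphism, to a purely scalar identity which is precisely the cocycle/fusion identity for $f$.

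The main obstacle I expect is bookkeeping: matching the index conventions in the definition of $\alpha$ (the $\mathrm{pr}_{ij}$ on $\px Xx^{[4]}$), the convention for the regressed bundle $\un_x(f)$ and its cocycle, and the sign/orientation conventions in $l(\gamma_i,\gamma_j) = \prev{\gamma_j}\pcomp\gamma_i$, so that the scalar factors really do cancel via \erf{cond1} rather than via some twisted version of it. There is also a minor point to check, namely that $\beta_{f,\varphi}$ so defined is independent of the choices (it is, since the canonical section $s$ of $\ev_1^{*}\un_x(f)$ over $\px Xx$ is canonical) and that it depends only on the homotopy class of $f$ up to 2-isomorphism — but this last point will not be needed here, as the lemma is stated for a fixed fusion map $f$ and will be used in Proposition \ref{prop:regfact} precisely to show homotopic $f$'s give 2-isomorphic 1-isomorphisms. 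Once the scalar identity is identified as \erf{cond1}, the proof is essentially complete.
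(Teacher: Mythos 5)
Your construction is essentially the paper's own proof: the paper also defines $\beta_{f,\varphi}$ as the identity on $l^{*}P_2$ tensored with the canonical trivialization $t$ of the pullback of $\un_x(f)$ to the based path space, and verifies the 2-isomorphism condition over $\px Xx^{[4]}$ by observing that the discrepancy between $e_{13}^{*}(\varphi\cdot f)$ and $e_{13}^{*}\varphi$ is exactly absorbed by the transition behaviour of $t$ (the values of $f$), with the fusion property of $f$ guaranteeing consistency of the descent data. The only cosmetic difference is that the paper packages your scalar bookkeeping into the commutative diagram \erf{eq:2isocomm} over $\ptx Xx^{[2]}$ rather than writing out the factors $f(l(\gamma_i,\gamma_j))$ explicitly, so your argument is correct and follows the same route.
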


\begin{proof}
We recall from \cite[Section 4.1]{waldorf9} that the bundle $\un_x(f)$ is obtained by descent theory: it descends from the trivial principal $A$-bundle $\trivlin$ over the space $\ptx Xx$ of thin homotopy classes of paths starting at $x$ using a descent structure defined by $f$. In other words, the pullback $P_f := \un_x(f)$ along $\ev: \ptx Xx \to X$ has a canonical trivialization $t: \trivlin \to P_f$, and the diagram
\begin{equation*}
\alxydim{@=1.5cm}{\trivlin \ar[r]^{f}\ar[d]_{\mathrm{pr}_2^{*}t} & \trivlin\ar[d]^{\mathrm{pr}_1^{*}t} \\ \mathrm{pr}_2^{*}P_f \ar@{=}[r] & \mathrm{pr}_1^{*}P_f}
\end{equation*}
over $\ptx Xx^{[2]}$ is commutative, in which the map $f$ is considered as a morphism between trivial bundles. We obtain another commutative diagram
\begin{equation}
\label{eq:2isocomm}
\alxydim{@C=1.7cm@R=1.5cm}{l^{*}P_1 \ar[r]^{l^{*}\varphi\cdot f}\ar[d]_{\mathrm{pr}_2^{*}t} & l^{*}P_2\ar[d]^{\mathrm{pr}_1^{*}t} \\ l^{*}P_1 \otimes \mathrm{pr}_2^{*}P_f \ar[r]_-{l^{*}\varphi \otimes \id} & \mathrm{pr}_1^{*}P_f \otimes l^{*}P_2\text{.}}
\end{equation}
Now we are ready to construct the 2-isomorphism $\beta_{f,\varphi}$. We recall that a 2-isomorphism is an isomorphism between the principal $A$-bundles of the two involved 1-isomorphisms, satisfying a certain condition. In our case the bundles of $\un_x(\varphi \cdot f)$ and $\un_x(\varphi) \otimes P_f$ are $l^{*}P_2$ and $l^{*}P_2 \otimes \mathrm{pr}_1^{*}P_f$, respectively, living over $\px Xx^{[2]}$.  The isomorphism $\beta$ is given by 
\begin{equation*}
\alxydim{@C=2cm}{l^{*}P_2 = l^{*}P_2 \otimes \trivlin \ar[r]^{\id \otimes \mathrm{pr}_1^{*}t} & l^{*}P_2 \otimes  \mathrm{pr}_1^{*}P_f} \text{.}
\end{equation*} 
The condition $\beta$ has to satisfy in order to define the desired 2-isomorphism is  that the diagram
\begin{equation*}
\tiny
\alxydim{@C=1.4cm@R=1.5cm}{e_{13}^{*}P_1 \otimes e_{34}^{*}P_2 \ar[d]_{\id \otimes \mathrm{pr}_{34}^{*}\beta} \ar[r]^-{e_{13}^{*}(\varphi \cdot f) \otimes \id} & e_{13}^{*}P_2 \otimes e_{34}^{*}P_2 \ar[d]^{\mathrm{pr}_{13}^{*}\beta \otimes \id} \ar[r]^-{\mathrm{pr}_{134}^{*}\lambda_2} & e_{14}^{*}P_2 \ar[d]_{\mathrm{pr}_{14}^{*}\beta} \ar[r]^-{\mathrm{pr}_{124}^{*}\lambda_2^{-1}} & e_{12}^{*}P_2 \otimes e_{24}^{*}P_2 \ar[d]^{\mathrm{pr}_{12}^{*}\beta \otimes \id}
\\
e_{13}^{*}P_1 \otimes e_{34}^{*}P_2 \otimes \mathrm{pr}_3^{*}P_f \ar[r]_{e_{13}^{*}\varphi \otimes \mathrm{swap}} & e_{13}^{*}P_2 \otimes \mathrm{pr}_1^{*}P_f \otimes e_{34}^{*}P_2  \ar[r]_-{\mathrm{pr}_{134}^{*}\lambda_2} & \mathrm{pr}_1^{*}P_f \otimes  e_{14}^{*}P_2 \ar[r]_-{\mathrm{pr}_{124}^{*}\lambda_2^{-1}} & e_{12}^{*}P_2  \otimes \mathrm{pr}_1^{*}P_f \otimes e_{24}^{*}P_2}
\end{equation*}
is commutative. Indeed, the subdiagram on the left is commutative due to the commutativity of diagram \erf{eq:2isocomm}, while the other two subdiagrams commute due to the definition of $\beta$.
\end{proof}

Now we look at the functor
\begin{equation}
\label{eq:funhomfact}
\alxydim{@R=1.5cm}{\fusbunth A {LX} \ar[r]  & \fusbun A{LX} \ar[d]^{\un_x} \\   &  \hc 1 \diffgrb AX\text{,}}
\end{equation}
where the horizontal functor simply forgets the thin structures.

\begin{proposition}
\label{prop:regfact}
Over a connected smooth manifold $M$, 
the functor \erf{eq:funhomfact} factors uniquely through the homotopy category $h\fusbunth A {LM}$, i.e. there is a unique functor
\begin{equation*}
h\un_x:h \fusbunth A{L M} \to \hc 1\diffgrb A M
\end{equation*}
such that the diagram
\begin{equation}
\label{eq:funhomfact2}
\alxydim{@R=1.5cm}{\fusbunth A {LM} \ar[r]^-{} \ar[d] & \fusbun A {LM} \ar[d]^{\un_x} \\ h\fusbunth A{LM} \ar[r]_-{h\un_x} &  \hc 1 \diffgrb AM }
\end{equation}
is strictly commutative.
\end{proposition}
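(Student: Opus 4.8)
The plan is to construct $h\un_x$ directly and then to verify well-definedness, functoriality, and uniqueness. On objects there is nothing to check, because the objects of $h\fusbunth A{LM}$ are by definition the thin fusion bundles themselves; I would set $h\un_x$ equal to $\un_x$ precomposed with the forgetful functor $\fusbunth A{LM} \to \fusbun A{LM}$. On a morphism, i.e.\ a homotopy class $[\varphi]$ of fusion-preserving, thin bundle morphisms, I would declare $h\un_x([\varphi])$ to be the $2$-isomorphism class of the $1$-morphism $\un_x(\varphi)$ in $\hc 1\diffgrb AM$. Once this is known to be well-defined on morphisms, strict commutativity of \erf{eq:funhomfact2} will be tautological, functoriality will be inherited from $\un_x$ (composites and identities of homotopy classes are represented by composites and identities of representatives, which $\un_x$ preserves), and uniqueness will be automatic, since the projection $\fusbunth A{LM} \to h\fusbunth A{LM}$ is the identity on objects and surjective on morphisms, so any functor filling the square must agree with $h\un_x$.

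The substantive step will be to show that $\un_x(\varphi_0)$ and $\un_x(\varphi_1)$ are $2$-isomorphic whenever $\varphi_0,\varphi_1:P_1\to P_2$ are homotopic through fusion-preserving, thin bundle morphisms. Writing the homotopy via its difference map $h':[0,1]\times\mathcal{L}M\to A$, as described in Section \ref{sec:homcat}, one has $h'_0\equiv 1$, each $h'_t$ is a fusion map, and $\varphi_1=\varphi_0\cdot f$ with $f:=h'_1\in\fus A{\mathcal{L}M}$; in particular $f=1$ in $h\fus A{\mathcal{L}M}$. Lemma \ref{lem:regequiv} then supplies a $2$-isomorphism $\beta_{f,\varphi_0}:\un_x(\varphi_0\cdot f)\Rightarrow\un_x(\varphi_0)\otimes\un_x(f)$. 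Because the regression functor for bundles from \cite{waldorf9} is defined on homotopy classes of fusion maps and respects group structures (diagram \erf{diag1}), $\un_x(f)\cong\un_x(1)=\trivlin$ as principal $A$-bundles over $M$; and since the action of $\bun AM$ on $1$-morphisms between bundle gerbes is unital and compatible with bundle isomorphisms, $\un_x(\varphi_0)\otimes\un_x(f)\cong\un_x(\varphi_0)\otimes\trivlin\cong\un_x(\varphi_0)$ as $1$-morphisms. Composing, $\un_x(\varphi_1)=\un_x(\varphi_0\cdot f)$ is $2$-isomorphic to $\un_x(\varphi_0)$, so the two coincide in $\hc 1\diffgrb AM$, which is exactly what is needed.

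The hard part is precisely this well-definedness step, and it rests on two inputs that will need to be pinned down carefully: first, matching the abstract $\otimes$-action of bundles on $1$-morphisms between gerbes with the explicit description of $\un_x(\varphi\cdot f)$ that comes out of the construction of $\un_x$ — which is the content of Lemma \ref{lem:regequiv} — and second, importing from Part I the fact that regression sends a null-homotopic fusion map to the trivial bundle. Everything else (the object assignment, functoriality, strict commutativity, uniqueness) is formal bookkeeping.
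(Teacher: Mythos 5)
Your proposal is correct and follows essentially the same route as the paper: uniqueness from surjectivity of the projection to the homotopy category, and well-definedness via Lemma \ref{lem:regequiv} applied to the difference map $h'$, combined with the Part I fact (used in the paper as \cite[Lemma 4.1.3]{waldorf9}, which is where the restriction to a smooth manifold enters) that regression of a null-homotopic fusion map yields a trivializable bundle, so that the resulting $2$-isomorphism identifies $\un_x(\varphi_1)$ with $\un_x(\varphi_0)$ in $\hc 1 \diffgrb AM$.
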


\begin{proof}
Uniqueness is clear because the projection to the homotopy category is surjective on objects and morphisms. Suppose $\varphi_0,\varphi_1: P \to P'$ are homotopic morphisms between thin fusion bundles. We have to construct a 2-isomorphism $\un_x(\varphi_0) \cong \un_x(\varphi_1)$, identifying the two 1-morphisms in $\hc 1 \diffgrb A M$. As explained in Section \ref{sec:homcat}, the homotopy is a  smooth map $h': [0,1] \times \mathcal{L}X \to A$ such that $h'_0=1$ and $\varphi_1 = \varphi_0 \cdot h'_1$. In that situation, Lemma \ref{lem:regequiv}  provides a 2-isomorphism
\begin{equation*}
\beta: \un_x(\varphi_1) \Rightarrow \un_x(\varphi_{0}) \otimes \un_x(h'_{1})\text{.}
\end{equation*}
Further, since $h'$ is a fusion homotopy from $h_1'$ to $1$, the principal $A$-bundle $\un_x(h_1')$ is trivializable \cite[Lemma 4.1.3]{waldorf9}, this argument requires the restriction to a smooth manifold.  Under such a trivialization, the 2-isomorphism $\beta$ is a 2-isomorphism $\un_x(\varphi_1) \nobr\cong\nobr \un_x(\varphi_0)$. \end{proof}

We recall from \cite[Section 5.2]{waldorf10} that if the fusion bundles are additionally equipped with superficial connections, the regression functor $\un_x$ can be refined to another functor \begin{equation*}
\uncon_x: \fusbunconsf A {LM} \to \hc 1 \diffgrbcon AM\text{.}
\end{equation*}
The two regression functors $\uncon_x$ and $h\un_x$ are compatible in the following sense.\begin{proposition}
\label{prop:comm}
Let $M$ be a connected smooth manifold. Then, the diagram
\begin{equation*}
\alxydim{@C=2cm@R=1.5cm}{\fusbunconsf A {LM} \ar[d]_{\sufi} \ar[r]^-{\uncon_x} &   \hc 1\diffgrbcon A M \ar[d]^{} \\ h\fusbunth A {LM} \ar[r]_-{h\un_x} & \hc 1 \diffgrb A M  }
\end{equation*}
is strictly commutative.
\end{proposition}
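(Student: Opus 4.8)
The proposition should follow almost formally from the construction of $\uncon_x$ in \cite[Section 5.2]{waldorf10} together with Proposition \ref{prop:regfact}, and the plan is to spell this out. Write $\mathcal{F}\maps \fusbunconsf A {LM} \to \fusbun A {LM}$ for the functor that forgets the superficial connection, and $\mathcal{S}\maps \hc 1\diffgrbcon A M \to \hc 1\diffgrb A M$ for the functor that forgets the gerbe connection (the right-hand vertical arrow of the diagram). The one input I would extract from Part II is that $\uncon_x$ is a \emph{strict} lift of $\un_x$ along these forgetful functors, i.e.
\begin{equation*}
\mathcal{S} \circ \uncon_x = \un_x \circ \mathcal{F}\text{.}
\end{equation*}
This is essentially tautological: by construction the underlying bundle gerbe of $\uncon_x(P,\lambda,\omega)$ is $\un_x(P,\lambda)$ --- which does not involve $\omega$ at all, only the connection datum being manufactured from $\omega$ --- and similarly the underlying $1$-morphism of $\uncon_x(\varphi)$ is $\un_x(\varphi)$. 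I would state this clause of the construction explicitly (or re-derive it) precisely so that the word ``strictly'' in the proposition is justified on the nose, rather than merely up to a canonical $2$-isomorphism.

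Next I would unwind the left-bottom composite $h\un_x \circ \sufi$. In the diagram of the proposition, $\sufi$ denotes the functor sending $(P,\lambda,\omega)$ to $(P,\lambda,d^{\omega}) \in \fusbunth A {LM}$, followed by the projection onto the homotopy category $h\fusbunth A {LM}$; on a fusion- and connection-preserving morphism $\varphi$ it returns the homotopy class of $\varphi$ regarded as a morphism of thin fusion bundles, which is legitimate because $d^{\omega}$ is defined via parallel transport and $\varphi$ intertwines parallel transports, hence is thin. Since forgetting the thin structure after replacing $(P,\lambda,\omega)$ by $(P,\lambda,d^{\omega})$ is exactly the functor $\mathcal{F}$, the strict commutativity of diagram \erf{eq:funhomfact2} from Proposition \ref{prop:regfact} gives
\begin{equation*}
h\un_x \circ \sufi = \un_x \circ \mathcal{F}\text{.}
\end{equation*}

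Combining the two displays yields $\mathcal{S}\circ\uncon_x = \un_x\circ\mathcal{F} = h\un_x\circ\sufi$, which is the asserted strict commutativity; nothing needs to be checked at the level of $2$-morphisms, since both composites are functors into the $1$-category $\hc 1\diffgrb A M$. The only point carrying any content is the first display, and it is genuinely just a bookkeeping matter about how $\uncon_x$ was built in Part II --- which is exactly why Theorem \ref{th:mainB} can be said to hold ``basically by construction''. Should one prefer, this proposition could instead be folded into the construction of $h\un_x$ in Section \ref{sec:regression}, where exactly these identities are already in play; the present separate statement merely records them. The main obstacle, such as it is, is purely expository: pinning down in Part II's construction that the passage ``forget the gerbe connection'' recovers $\un_x$ on the nose (objects \emph{and} $1$-morphisms), so that ``strictly'' is not an overstatement.
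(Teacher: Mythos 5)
Your proposal is correct and follows essentially the same route as the paper: the paper likewise decomposes the square into the evident compatibilities of $\sufi$ with the forgetful functors, the factorization through the homotopy category from Proposition \ref{prop:regfact}, and the fact that, by its construction in Part II, $\uncon_x$ merely equips the gerbe produced by $\un_x$ with a connection, so forgetting it recovers $\un_x$ on the nose.
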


\begin{proof}
The commutativity follows from the one of the following subdiagrams:
\begin{equation*}
\alxydim{@C=1cm}{\fusbunconsf A {LM} \ar[dd]_{\sufi}  \ar[drr] \ar[dr]_{} \ar[rrr]^-{\uncon_x} &&&  \hc 1 \diffgrbcon A M \ar[dd]^{} \\ & \fusbunth A {LM} \ar[r] \ar[dl] & \fusbun A{LM} \ar[dr]^{\un_x} \\  h\fusbunth A {LM} \ar[rrr]_-{h\un_x} &&&  \hc 1\diffgrb A M \text{.} }
\end{equation*}
Indeed, the two triangular diagrams commute evidently, the one on the bottom commutes due to Proposition \ref{prop:regfact}, and the big one commutes because $\uncon_x$ just constructs a gerbe connection on the bundle gerbe constructed by $\un_x$. 
\end{proof}

Finally, we need in Section \ref{sec:proof} the following reformulation of Lemma \ref{lem:regequiv}.

\begin{proposition}
\label{prop:equiv}
Let $P_1$ and $P_2$ be thin fusion bundles over $LM$.
The regression functor $h\un_x$ is equivariant on Hom-sets in the sense that the diagram
\begin{equation*}
\alxydim{@C=1cm@R=1.5cm}{\hom(P_1,P_2) \times h\fus A{\mathcal{L}M}   \ar[d]_-{h\un_x \times \un_x}  \ar[r] & \hom(P_1,P_2) \ar[d]^{h\un_x}    \\  \hom(h\un_x(P_1),h\un_x(P_2)) \times \hc 0 \diffbun AM \ar[r] & \hom(h\un_x(P_1),h\un_x(P_2)) }
\end{equation*}
is commutative. Here, the Hom-sets are those of the categories $h\fusbunth A{LM}$ and $\hc 1 \diffgrb AM$, respectively, and the horizontal maps are the actions of Proposition \ref{prop:homtorsor},  and the usual action of bundles on isomorphisms between gerbes, respectively. 
\end{proposition}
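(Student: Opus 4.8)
The plan is to deduce this immediately from Lemma \ref{lem:regequiv}, in which all of the substantive content already resides; Proposition \ref{prop:equiv} is just a repackaging on the level of equivalence classes. First I would fix representatives of the two chosen elements in the upper left corner: a fusion-preserving, thin bundle morphism $\varphi:P_1\to P_2$ for a class in $\hom(P_1,P_2)$, and a fusion map $f:\mathcal{L}M\to A$ for a class in $h\fus A{\mathcal{L}M}$. Tracing the square along the top and then the right, Proposition \ref{prop:homtorsor} identifies the image of $(\varphi,f)$ in $\hom(P_1,P_2)$ with the class of $\varphi\cdot f$, and then the construction of $h\un_x$ (Proposition \ref{prop:regfact}) identifies $h\un_x$ of that class with the 2-isomorphism class of the 1-morphism $\un_x(\varphi\cdot f)$. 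Tracing along the left and then the bottom, the left vertical map produces the pair $(h\un_x(\varphi),\un_x(f))$ -- the 2-isomorphism class of $\un_x(\varphi)$ together with the isomorphism class of the principal $A$-bundle $\un_x(f)$ over $M$ obtained by regression of the fusion map $f$ -- and the bottom action then produces the 2-isomorphism class of $\un_x(\varphi)\otimes\un_x(f)$.

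The key step is then simply to invoke Lemma \ref{lem:regequiv}, which provides a 2-isomorphism
\begin{equation*}
\beta_{f,\varphi}: \un_x(\varphi\cdot f) \Rightarrow \un_x(\varphi)\otimes\un_x(f)\text{.}
\end{equation*}
This shows that the two 1-morphisms obtained by the two routes around the square represent the same element of $\hom(h\un_x(P_1),h\un_x(P_2))$, which is exactly the asserted commutativity.

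The only point that still needs attention -- and it is bookkeeping rather than a genuine obstacle -- is independence of the chosen representatives $\varphi$ and $f$. I would dispatch this by observing that every map occurring in the square is already known to descend to the relevant quotients: the top action by Proposition \ref{prop:homtorsor}, the functor $h\un_x$ by Proposition \ref{prop:regfact}, the regression $\un_x:h\fus A{\mathcal{L}M}\to\hc 0\diffbun AM$ of fusion maps by \cite[Theorem B]{waldorf9}, and the action of principal $A$-bundles over $M$ on 1-isomorphisms between gerbes by standard bundle gerbe theory. Hence the equality of classes established above does not depend on the choices, and there is no hard step: the whole of the real work -- the construction of $\beta_{f,\varphi}$ and the verification of its compatibility with the bundle gerbe products $\lambda_1$ and $\lambda_2$ -- was already carried out in the proof of Lemma \ref{lem:regequiv}.
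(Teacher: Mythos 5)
Your proposal is correct and matches the paper's own treatment: the paper states Proposition \ref{prop:equiv} as a \quot{reformulation of Lemma \ref{lem:regequiv}} and gives no further argument, exactly as you do by tracing both routes around the square and invoking the 2-isomorphism $\beta_{f,\varphi}$. Your additional bookkeeping about independence of representatives (via Propositions \ref{prop:homtorsor} and \ref{prop:regfact} and \cite[Theorem B]{waldorf9}) is sound and only makes explicit what the paper leaves implicit.
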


\setsecnumdepth{1}

\section{Proof of the Main Theorem}

\label{sec:proof}

The main theorem (the regression functor $h\un_x$ is an equivalence of categories) is proved by Proposition \ref{prop:esssurj} (it is essentially surjective) and Proposition \ref{prop:fullf} (it is full and faithful) below. For the first part we need the following.

\begin{lemma}
\label{lem:forgetess}
Let $M$ be a smooth manifold.
The functor 
\begin{equation*}
f: \diffgrbcon A M \to \diffgrb AM
\end{equation*}
that forgets the connections
is essentially surjective. \end{lemma}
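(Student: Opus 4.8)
The plan is to show that every diffeological $A$-bundle gerbe over $M$ admits a connection; then the forgetful functor $f$ is even surjective on objects, hence \emph{a fortiori} essentially surjective. So let $\mathcal{G}$ be a diffeological $A$-bundle gerbe over $M$, consisting of a subduction $\pi\maps Y \to M$, a principal $A$-bundle $L$ over $Y^{[2]}$, and a bundle gerbe product $\mu$ over $Y^{[3]}$. A bundle gerbe connection is the datum of a connection $\nabla$ on $L$ that is compatible with $\mu$, together with a curving, i.e.\ a $2$-form $B \in \Omega^2(Y,\mathfrak{a})$ whose difference $\mathrm{pr}_2^{*}B - \mathrm{pr}_1^{*}B$ equals the curvature of $\nabla$; for the statement of essential surjectivity of $f$ we only need the existence of such a pair $(\nabla,B)$, not any normalization of its curvature.

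First I would produce a connection $\nabla$ on $L$ compatible with $\mu$. The standard trick in the smooth setting (going back to Murray \cite{murray}) is to take \emph{any} connection $\nabla_0$ on $L$ — which exists because $A$ is an abelian Lie group and $Y^{[2]}$ is a diffeological space admitting partitions of unity subordinate to plot-covers (here one uses the theory of principal bundles and connections over diffeological spaces from \cite[Section 3]{waldorf9}, in particular the existence statement for connections) — and then average it against $\mu$. Concretely, $\delta \nabla_0 \df \mathrm{pr}_{23}^{*}\nabla_0 - \mathrm{pr}_{13}^{*}\nabla_0 + \mathrm{pr}_{12}^{*}\nabla_0$, transported through $\mu$, is the pullback to $Y^{[3]}$ of a well-defined $\mathfrak{a}$-valued $1$-form $\varepsilon$ on $Y^{[2]}$ (this uses the simplicial/descent identity that $\delta(\delta\nabla_0)=0$ together with the diffeological version of Murray's lemma on forms on fibre products, which the paper itself invokes in Appendix \ref{sec:app}); then $\nabla \df \nabla_0 + \tfrac12(\text{correction built from }\varepsilon)$ — more precisely one solves $\delta\varepsilon' = \varepsilon$ using a section of the subduction $\mathrm{pr}\maps Y^{[2]} \to Y$ and sets $\nabla \df \nabla_0 + \varepsilon'$ — is compatible with $\mu$. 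The curving $B$ is then obtained similarly: the curvature $F_\nabla \in \Omega^2(Y^{[2]},\mathfrak{a})$ satisfies $\delta F_\nabla = 0$ by $\mu$-compatibility, so by the same fibre-product lemma $F_\nabla = \mathrm{pr}_2^{*}B_0 - \mathrm{pr}_1^{*}B_0$ for some $B_0 \in \Omega^2(Y,\mathfrak{a})$, and $(\nabla, B_0)$ is a bundle gerbe connection on $\mathcal{G}$.

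The main obstacle I anticipate is purely the \emph{diffeological bookkeeping}: every step above (existence of a connection on a principal $A$-bundle over a diffeological space, existence of partitions of unity / sections of subductions on the relevant fibre products $Y^{[k]}$, and above all the descent statement that a $\delta$-closed form on $Y^{[k]}$ comes from $Y^{[k-1]}$) must be justified in the diffeological rather than the smooth-manifold category. For the last of these, the generalization of Murray's lemma to the diffeological setting is exactly what the paper defers to Appendix \ref{sec:app}; I would cite that, or alternatively — and this is cleaner for just \emph{this} lemma — I would reduce to the smooth case by choosing the surjective submersion $Y \to M$ to be an honest open cover $Y = \coprod_i U_i$ of $M$: any diffeological bundle gerbe over a \emph{manifold} $M$ is stably isomorphic to one presented over such a cover (this is the diffeological counterpart of the classification by \v{C}ech cohomology, and only stable isomorphism classes matter for essential surjectivity of $f$), and over a disjoint union of open subsets of $M$ all the averaging and descent arguments are the classical smooth ones. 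With that reduction the proof is a line-for-line transcription of the classical construction of a connection-and-curving on a bundle gerbe, and essential surjectivity of $f$ follows since each object in the image of $f$ is, on the nose, an object of $\diffgrb A M$ equipped with forgotten connection data.
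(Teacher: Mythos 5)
Your primary route has a genuine gap at its very first step: you assert that the diffeological principal $A$-bundle $L$ over $Y^{[2]}$ admits \emph{some} connection $\nabla_0$, citing ``partitions of unity subordinate to plot-covers'' and ``the existence statement for connections'' in \cite[Section 3]{waldorf9}. No such existence statement is available: general diffeological spaces (and in particular the fibre products $Y^{[2]}$ of an arbitrary subduction $Y\to M$) do not admit partitions of unity, and Part I proves no theorem guaranteeing that a diffeological principal bundle over a diffeological base carries a connection. This is precisely the point of the lemma being over a smooth manifold $M$, and precisely why the paper's proof does \emph{not} try to build a connection directly on the given diffeological data: it first replaces $\mathcal{G}$ by an isomorphic \emph{smooth} bundle gerbe -- the subduction can be refined to a surjective submersion between manifolds \cite[Lemma A.2.2]{waldorf9}, and a diffeological principal $A$-bundle over a manifold is automatically a smooth one \cite[Theorem 3.1.7]{waldorf9} -- and only then invokes Murray's smooth existence result \cite{murray}. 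Once $\nabla_0$ exists, your averaging/descent steps (compatibility correction via $\delta\varepsilon'=\varepsilon$ and the curving via $\delta B_0 = F_\nabla$) are fine, since Lemma \ref{lem:exactsequence} does apply to a subduction over the manifold $M$; but the argument never gets off the ground without $\nabla_0$.

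Your fallback -- reduce to a presentation of $\mathcal{G}$ by smooth data and then run the classical construction -- is in spirit the paper's proof, but as stated it also leans on an unproved ingredient: a \v{C}ech-type classification asserting that every diffeological bundle gerbe over a manifold is (stably) isomorphic to one presented over a disjoint union of open sets. That statement is plausible but is not established in this paper series in that form; the clean and available substitute is exactly the two citations above (refinement of the subduction to a surjective submersion, plus smoothness of diffeological bundles over manifolds), after which Murray's theorem finishes the proof. If you replace your appeal to partitions of unity / \v{C}ech classification by those two references, your argument collapses to the paper's.
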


\begin{proof}
Every diffeological bundle gerbe $\mathcal{G}$ is isomorphic to a smooth one, because its subduction can be refined to a surjective submersion \cite[Lemma A.2.2]{waldorf9}, in which case its principal $A$-bundle becomes a smooth one \cite[Theorem 3.1.7]{waldorf9}. But every smooth bundle gerbe admits a connection \cite{murray}. 
\end{proof}

\begin{proposition}
\label{prop:esssurj}
For $M$ a smooth manifold, the functor $h\un_x$ is essentially surjective. 
\end{proposition}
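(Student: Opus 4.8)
The plan is to deduce this from the equivalence already available in the setting \emph{with} connections. Fix a diffeological $A$-bundle gerbe $\mathcal{G}$ over $M$. First I would equip it with a connection: by Lemma \ref{lem:forgetess} the forgetful functor $f : \diffgrbcon A M \to \diffgrb AM$ is essentially surjective, so there exist a gerbe with connection $\mathcal{G}^{\nabla}$ and an isomorphism $f(\mathcal{G}^{\nabla}) \cong \mathcal{G}$ in $\hc 1\diffgrb A M$.

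Next I would invoke \cite[Theorem A]{waldorf10}: transgression $\trcon$ is an equivalence $\hc 1\diffgrbcon A M \to \fusbunconsf A{LM}$ whose inverse is $\uncon_x$. Put $(P,\lambda,\omega) := \trcon(\mathcal{G}^{\nabla})$, a fusion bundle with superficial connection, so that $\uncon_x(P,\lambda,\omega) \cong \mathcal{G}^{\nabla}$ in $\hc 1\diffgrbcon A M$. Applying the functor $\sufi : \fusbunconsf A{LM} \to \fusbunth A{LM}$ yields the thin fusion bundle $\sufi(P,\lambda,\omega) = (P,\lambda,d^{\omega})$, an object of $h\fusbunth A{LM}$ which I claim is sent to $\mathcal{G}$.

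Indeed, on the level of \emph{objects} both $h\un_x \circ \sufi$ and $f \circ \uncon_x$ send $(P,\lambda,\omega)$ to $\un_x(P,\lambda)$ — the bundle gerbe built, as recalled at the start of Section \ref{sec:regression}, from the subduction $\ev_1 : \px Mx \to M$, the bundle $l^{*}P$ over $\px Mx^{[2]}$, and the gerbe product $\lambda$: the functor $\sufi$ only records the thin structure $d^{\omega}$, which $h\un_x$ then forgets, while $\uncon_x$ only adds a gerbe connection, which $f$ then forgets. (For connected $M$ this is precisely the object-level part of the commutative square in Proposition \ref{prop:comm}.) Hence
\begin{equation*}
h\un_x\bigl(\sufi(P,\lambda,\omega)\bigr) = f\bigl(\uncon_x(P,\lambda,\omega)\bigr) \cong f(\mathcal{G}^{\nabla}) \cong \mathcal{G}\text{,}
\end{equation*}
the middle isomorphism being the image under $f$ of the one supplied by the Part II equivalence. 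This exhibits $\mathcal{G}$ as isomorphic to the regression of a thin fusion bundle.

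This is essentially a diagram chase through results already in hand — Lemma \ref{lem:forgetess} and \cite[Theorem A]{waldorf10} — so I do not anticipate a genuine obstacle. The only step needing care is the identification of $h\un_x \circ \sufi$ with $f \circ \uncon_x$ on objects \emph{on the nose} rather than merely up to isomorphism, which one checks by unwinding the explicit description of $\un_x$ recalled in Section \ref{sec:regression}; restricting to connected $M$ it is subsumed by Proposition \ref{prop:comm}.
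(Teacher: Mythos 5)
Your proposal is correct and follows essentially the same route as the paper: choose a connection via Lemma \ref{lem:forgetess}, transgress with $\trcon$, apply $\sufi$, and use Proposition \ref{prop:comm} together with \cite[Theorem A]{waldorf10} to identify $h\un_x(\sufi(\trcon(\mathcal{G}^{\nabla})))$ with $\mathcal{G}$. Your extra remark about checking the object-level identification directly when $M$ is not assumed connected is a minor refinement, not a different argument.
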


\begin{proof}
Suppose $\mathcal{G}$ is a diffeological bundle gerbe over $X$. By Lemma \ref{lem:forgetess} there exists a smooth bundle gerbe $\mathcal{H}$ with connection such that $f(\mathcal{H}) \cong \mathcal{G}$, and thus a fusion bundle with superficial connection $P := \trcon(\mathcal{H})$ over $LM$.  We claim that $\sufi(P)$ is an essential preimage for $\mathcal{G}$. Indeed, we have
\begin{equation*}
h\un_x(\sufi(P)) = f(\uncon_x(P)) =f(\uncon_x(\trcon(\mathcal{H}))) \cong f(\mathcal{H}) \cong \mathcal{G}\text{,}
\end{equation*}
using Proposition \ref{prop:comm} and \cite[Theorem A]{waldorf10}.
\end{proof}

Next we want to prove that the functor $h\un_x$ is full and faithful. Our strategy is to prove first that it is essentially injective, i.e. injective on isomorphism classes. This is in fact the hardest part; the remaining part is then a mere consequence of    Proposition \ref{prop:equiv}. Essential injectivity is proved in the following three lemmata. The first one is well-known for \emph{smooth} bundle gerbes, but requires special attention for \emph{diffeological} ones. 

\begin{lemma}
\label{lem:connmorph}
Suppose $\mathcal{G}$ and $\mathcal{H}$ are diffeological bundle gerbes over a smooth manifold $M$, and $\mathcal{A}: \mathcal{G} \to \mathcal{H}$ is an isomorphism. Suppose $\mathcal{G}$ and $\mathcal{H}$ are equipped with connections. Then, there exists a connection on the isomorphism $\mathcal{A}$ and a 2-form $\eta \in \Omega^2_{\mathfrak{a}}(M)$ such that
\begin{equation*}
\mathcal{A}: \mathcal{G} \to \mathcal{H} \otimes \mathcal{I}_{\eta}
\end{equation*}
is connection-preserving. 
\end{lemma}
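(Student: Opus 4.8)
The plan is to reduce the statement to the classical case of \emph{smooth} bundle gerbes, following the same refinement strategy as in the proof of Lemma \ref{lem:forgetess}, and then to run the standard bundle-gerbe computation. The only genuinely diffeological difficulty is that a diffeological principal $A$-bundle over an arbitrary diffeological space need not admit a connection at all; this obstruction disappears as soon as the relevant fibre products are honest manifolds, which is exactly what the refinement achieves.

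First I would bring all the data into smooth form. By \cite[Lemma A.2.2]{waldorf9} the subductions underlying $\mathcal{G}$ and $\mathcal{H}$ can be refined to surjective submersions, and by \cite[Theorem 3.1.7]{waldorf9} the principal $A$-bundles of $\mathcal{G}$, of $\mathcal{H}$, and (after a further refinement of its presentation if necessary) of $\mathcal{A}$ then become smooth bundles over manifolds; the given connections and curvings simply pull back along the refinement maps. This yields smooth bundle gerbes $\mathcal{G}'$, $\mathcal{H}'$ with connection, a smooth isomorphism $\mathcal{A}': \mathcal{G}' \to \mathcal{H}'$ whose bundle $Q'$ now lives over the manifold $Y_{\mathcal{G}'} \times_M Y_{\mathcal{H}'}$, together with connection-preserving refinement isomorphisms $r_{\mathcal{G}}: \mathcal{G} \to \mathcal{G}'$ and $r_{\mathcal{H}}: \mathcal{H} \to \mathcal{H}'$ satisfying $\mathcal{A}' = r_{\mathcal{H}} \circ \mathcal{A} \circ r_{\mathcal{G}}^{-1}$. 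Once the lemma is established for $\mathcal{A}'$, the conclusion transports back to $\mathcal{A}$ by composing with $r_{\mathcal{G}}$ and $r_{\mathcal{H}}^{-1}$ (and $r_{\mathcal{H}}^{-1} \otimes \id$ on the $\mathcal{I}_{\eta}$-twisted target), all of which carry their canonical connections; the $2$-form $\eta$ produced downstairs is then the one claimed for $\mathcal{A}$.

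For the smooth case I would argue as follows. Since $Q'$ is a smooth bundle over a manifold it carries \emph{some} connection $\nabla_0$, and the failure of the bundle isomorphism of $\mathcal{A}'$ to be connection-preserving for $\nabla_0$ is a $1$-form on the double fibre product $(Y_{\mathcal{G}'} \times_M Y_{\mathcal{H}'})^{[2]}$ which is a cocycle for the complex of differential forms on the fibre products of the submersion $Y_{\mathcal{G}'} \times_M Y_{\mathcal{H}'} \to M$. By the exactness of that complex (Murray's lemma \cite{murray}, the smooth case of what is discussed in Appendix \ref{sec:app}) this $1$-form is a coboundary, so correcting $\nabla_0$ by the corresponding $1$-form on $Y_{\mathcal{G}'} \times_M Y_{\mathcal{H}'}$ produces a connection $\nabla$ on $Q'$ for which the isomorphism is connection-preserving. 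With this choice, the $2$-form $\text{curv}(\nabla) - (\text{pr}_{\mathcal{H}'}^{*}B_{\mathcal{H}'} - \text{pr}_{\mathcal{G}'}^{*}B_{\mathcal{G}'})$ on $Y_{\mathcal{G}'} \times_M Y_{\mathcal{H}'}$ is closed and, using that the bundle isomorphism relates $\text{curv}(L_{\mathcal{G}'}) = \delta B_{\mathcal{G}'}$ to $\text{curv}(L_{\mathcal{H}'}) = \delta B_{\mathcal{H}'}$, is also annihilated by the fibre-product differential $\delta$; hence it descends to a unique $\eta \in \Omega^2_{\mathfrak{a}}(M)$. By construction this is exactly the condition for $\nabla$ to turn $\mathcal{A}'$ into a connection-preserving isomorphism $\mathcal{G}' \to \mathcal{H}' \otimes \mathcal{I}_{\eta}$, since the curving of $\mathcal{H}' \otimes \mathcal{I}_{\eta}$ is $B_{\mathcal{H}'} + \pi^{*}\eta$.

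I expect the main obstacle to be organizational rather than conceptual: one must keep careful track of the various pulled-back curvings and of the fact that the refinement isomorphisms are strictly connection-preserving, so that no spurious twist is introduced and the $2$-form obtained for $\mathcal{A}'$ is literally the $\eta$ in the statement for $\mathcal{A}$. The diffeology-specific content is entirely confined to the first step -- passing to surjective submersions so that $Q'$ admits a connection and Murray's lemma is available -- while everything afterwards is the familiar computation for smooth bundle gerbes.
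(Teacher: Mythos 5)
Your smooth-case computation (take a connection on the morphism's bundle, measure the failure of the isomorphism over the double fibre product to be connection-preserving by a $1$-form, observe the cocycle identity, kill it by Murray's exact sequence, and descend a $2$-form to $M$) is essentially the computation the paper performs, so the analytic core is fine. The gap is in the reduction step on which your whole strategy rests. In this framework a $1$-isomorphism $\mathcal{A}$ consists of a principal $A$-bundle $Q$ over a \emph{refinement} $Z$, i.e.\ over a diffeological space equipped with a subduction $Z \to Y_{\mathcal{G}} \times_M Y_{\mathcal{H}}$; it is not given over the fibre product itself. Refining $Y_{\mathcal{G}}$ and $Y_{\mathcal{H}}$ to surjective submersions does not change this: the induced morphism $\mathcal{G}' \to \mathcal{H}'$ has its bundle over $(Y_{\mathcal{G}'} \times_M Y_{\mathcal{H}'}) \times_{Y_{\mathcal{G}} \times_M Y_{\mathcal{H}}} Z$, which is again only a diffeological space, and a \emph{further refinement} only makes the base of $Q$ finer --- it can never push the data down onto the manifold $Y_{\mathcal{G}'} \times_M Y_{\mathcal{H}'}$. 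To get $Q'$ over that manifold you need a descent/normalization statement (every $1$-morphism is $2$-isomorphic to one whose refinement is the identity $Z = Y_1 \times_M Y_2$), which in the diffeological setting rests on the stack property of diffeological bundles \cite[Theorem 3.1.5]{waldorf9} and the Hom-category machinery of \cite[Section 3.1]{waldorf10}, and which you neither prove nor cite. As written, the pivotal sentence ``since $Q'$ is a smooth bundle over a manifold it carries some connection'' is unjustified, and the existence of a connection on the morphism's bundle is exactly the diffeological difficulty you set out to avoid.

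Note that the paper sidesteps this issue entirely and never needs a connection on $Q$ to exist: it refines $Z$ along the bundle projection $Q \to Z$, so that the morphism's bundle becomes the trivial bundle over $Q$ (which tautologically admits connection $1$-forms), and then applies the diffeological generalization of Murray's lemma (Lemma \ref{lem:exactsequence}) to the subduction $Q \to M$ --- the total space is allowed to remain diffeological; only the base must be a manifold. So to repair your route you must either establish the normalization of $1$-morphisms over the canonical fibre product in the diffeological setting, or adopt the paper's trivialization trick, in which case the Appendix~\ref{sec:app} lemma is unavoidable and the reduction to smooth gerbes buys you nothing. Two minor points: descent of your $2$-form only requires $\delta$-closedness, not closedness (the descended $\eta$ is in general not closed, since $\mathrm{d}\eta$ is the difference of the $3$-curvatures); and the connection-preserving isomorphism you produce is only $2$-isomorphic to $\mathcal{A}$, which is harmless here because the paper's own proof (refining along $Q \to Z$) has the same degree of looseness.
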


\begin{proof}
Suppose the 1-isomorphism $\mathcal{A}$ has a refinement $Z$ of the surjective submersions $Y_{\mathcal{G}}$ and $Y_{\mathcal{H}}$, and a principal $A$-bundle $Q$ over $Z$. By refining the refinement $Z$ along the bundle projection $Q \to Z$, we get a new 1-isomorphism with the trivial bundle $\trivlin$ over $Q$, consisting further of an isomorphism
\begin{equation*}
\alpha: P_\mathcal{G} \to P_\mathcal{H}
\end{equation*}  
of principal $A$-bundles over $Q^{[2]} = Q \times_M Q$. It is compatible with the bundle gerbe products $\mu_{\mathcal{G}}$ and $\mu_{\mathcal{H}}$. In general it does not preserve the given connections on $P_{\mathcal{G}}$ and $P_{\mathcal{H}}$. The deviation from being connection-preserving is a 1-form $\omega \in \Omega^1(Q^{[2]},\mathfrak{a})$. It satisfies the cocycle condition
\begin{equation*}
\mathrm{pr}_{13}^{*}\omega = \mathrm{pr}_{12}^{*}\omega + \mathrm{pr}_{23}^{*}\omega
\end{equation*} 
over $Q^{[3]}$. Since $Q \to M$ is a subduction over a smooth manifold, Lemma \ref{lem:exactsequence} implies that there exists a 1-form $\gamma \in \Omega^1_{\mathfrak{a}}(Q)$ such that $\omega = \mathrm{pr}_2^{*}\gamma - \mathrm{pr}_{1}^{*}\gamma$. Equipping the trivial bundle $\trivlin$ over $Q$ with the connection 1-form $\gamma$, the isomorphism
\begin{equation*}
\alpha: P_{\mathcal{G}} \otimes \mathrm{pr}_2^{*}\trivlin_\gamma \to \mathrm{pr}_1^{*}\trivlin_\gamma \otimes P_{\mathcal{H}}
\end{equation*}
is connection-preserving. Now, the 2-form
$C_{\mathcal{H}}-C_{\mathcal{G}} + \mathrm{d}\gamma \in \Omega_{\mathfrak{a}}^2(Q)$ descends to the desired 2-form $\eta \in \Omega^2_{\mathfrak{a}}(M)$.
\end{proof}

The next lemma states that every thin fusion bundle is isomorphic to another one obtained by transgression. For the purpose of this section, let us fix the following terminology.

\begin{definition}
Suppose $(P,\lambda)$ is a fusion bundle with compatible and symmetrizing thin structure $d$. A connection on the bundle gerbe $\un_x(P,\lambda)$, consisting of a connection $\omega$ on the bundle $l^{*}P$ and a curving $B \in \Omega^2(\px Mx)$, is called \emph{good} if there exists a compatible and symmetrizing thin connection $\omega'$ on $P$ with $d=d^{\omega'}$ and  $l^{*}\omega' = \omega$. \end{definition}

Good connections always exist: by definition of a thin fusion bundle there exists a compatible and symmetrizing thin connection $\omega'$ on $(P,\lambda)$ with $d^{\omega'}=d$. Then define $\omega := l^{*}\omega'$. Further, with Lemma \ref{lem:exactsequence} the usual argument for the existence of curvings applies: $\mathrm{d}\omega \in \Omega^2(\px Mx^{[2]})$ satisfies $\delta(\mathrm{d}\omega) = 0$, so that there exists a 2-form $B \in \Omega^2(\px Mx)$ satisfying the condition for curvings, namely $\delta B = \mathrm{d}\omega$. Thus, the pair $(B,\omega)$ is a good connection.

\begin{lemma}
\label{lem:transreg}
Let $(P,\lambda,d)$ be a thin fusion bundle,  let $(B,\omega)$ be a good connection on $\un_x(P,\lambda)$, and write $\mathcal{G}$ for the corresponding bundle gerbe with connection.  Then there exists a fusion-preserving, thin bundle isomorphism 
\begin{equation*}
\sufi(\trcon_{\mathcal{G}}) \cong (P,\lambda,d)\text{.}
\end{equation*}
\end{lemma}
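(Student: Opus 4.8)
The plan is to reduce the assertion to the equivalence of categories $(\uncon_x,\trcon)$ established in \cite[Theorem A]{waldorf10}. By the definition of a good connection, fix a compatible, symmetrizing thin connection $\omega'$ on $P$ with $d=d^{\omega'}$ and $l^{*}\omega'=\omega$. The first, and principal, step is to promote $\omega'$ from thin to \emph{superficial}: the point is that $l^{*}\omega'=\omega$ together with the curving $B$ assembles into a genuine bundle gerbe connection on $\un_x(P,\lambda)$, with curving $B$ (so $\delta B=\mathrm{d}\omega$) and some curvature $3$-form $H\in\Omega^3_{\mathfrak{a}}(M)$ (so $\mathrm{d}B=\ev_1^{*}H$); tracing this through the map $l$ one finds that the curvature of $\omega'$ is the loop transgression $LH$, and a connection which is thin and whose curvature is transgressed from a $3$-form on $M$ is superficial, since the pullback of $H$ along a rank-two homotopy of tori vanishes (compare the analysis of superficial connections in \cite[Section 2]{waldorf10}). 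I expect this step to be the main obstacle; an alternative is to build the isomorphism by hand from the explicit description of $\trcon$ on gerbes of the form $\un_x(P,\lambda)$, but the delicate point there is the very same one -- identifying the transgression connection along thin paths with $\omega'$.

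Granting superficiality of $\omega'$, the triple $(P,\lambda,\omega')$ is an object of the groupoid $\fusbunconsf A{LM}$, and $\uncon_x(P,\lambda,\omega')$ is the bundle gerbe $\un_x(P,\lambda)$ equipped with a connection whose bundle part is again $l^{*}\omega'=\omega$. Its curving need not equal $B$, but the two are curvings for the same $\omega$, so their difference is $\delta$-closed and hence pulled back from $M$ by Lemma \ref{lem:exactsequence}; thus there is $\eta\in\Omega^2_{\mathfrak{a}}(M)$ with $\mathcal{G}\cong\uncon_x(P,\lambda,\omega')\otimes\mathcal{I}_{\eta}$ as bundle gerbes with connection. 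Applying $\trcon$, using that it is quasi-inverse to $\uncon_x$ and monoidal \cite[Theorem A]{waldorf10}, and using the fusion-preserving, connection-preserving morphism $\sigma: \trcon(\mathcal{I}_{\eta}) \to \trivlin_{-L\eta}$ recalled before Proposition \ref{prop:sufitriv}, we obtain a fusion-preserving, connection-preserving isomorphism
\begin{equation*}
\trcon_{\mathcal{G}}\;\cong\;(P,\lambda,\omega'-L\eta)
\end{equation*}
in $\fusbunconsf A{LM}$.

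It remains to apply the functor $\sufi$. By functoriality $\sufi(\trcon_{\mathcal{G}})\cong(P,\lambda,d^{\omega'-L\eta})$ as thin fusion bundles, so the last point is that $d^{\omega'-L\eta}=d^{\omega'}=d$; that is, subtracting $L\eta$ with $\eta\in\Omega^2_{\mathfrak{a}}(M)$ from a thin connection leaves the induced thin structure unchanged. This is the computation from the proof of Proposition \ref{prop:sftrivial}: the parallel transport of $L\eta$ along a thin path $\gamma\in PLM$ with $\exd\gamma$ of rank one is multiplication by $\exp\left(\int_{[0,1]\times S^1}(\exd\gamma)^{*}\eta\right)=1$, so $d^{\omega'-L\eta}_{\tau_0,\tau_1}=d^{\omega'}_{\tau_0,\tau_1}$ for every pair of thin homotopic loops. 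Composing the isomorphisms gives the desired fusion-preserving, thin isomorphism $\sufi(\trcon_{\mathcal{G}})\cong(P,\lambda,d)$. Everything after the superficiality claim is bookkeeping with the Part II equivalence and with Proposition \ref{prop:sftrivial}.
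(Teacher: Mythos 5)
The gap is exactly the step you flagged as the main obstacle: promoting the integrating thin connection $\omega'$ to a superficial one. Your justification is that $l^{*}\omega'=\omega$ forces $\mathrm{d}\omega'=LH$. But in the regressed gerbe $\un_x(P,\lambda)$ the relevant map is $l\maps \px Mx^{[2]} \to LM$, whose image consists only of loops based at $x$; the identity $l^{*}\omega'=\omega$ therefore constrains $\omega'$ over based loops only and says nothing about $\mathrm{d}\omega'$ elsewhere on $LM$. Even over that image it yields only $l^{*}\mathrm{d}\omega'=\delta B$, and this does not identify $\mathrm{d}\omega'$ with $LH$: one would need both injectivity of $l^{*}$ (false) and $\delta B = l^{*}(LH)$, whereas $B$ is merely some curving for $\omega$ and may differ from the transgressed curving of $H$ by a closed $2$-form on $\px Mx$ whose $\delta$-image need not vanish. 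Moreover, nothing in Definitions \ref{def:integability} and \ref{def:compandsym} guarantees that a compatible, symmetrizing \emph{thin} integrating connection is superficial; if that were automatic, the lemma would indeed collapse onto \cite[Theorem A]{waldorf10} as you describe (and Proposition \ref{prop:superficialprop} would be near-trivial), but it is precisely this that is not known and that the lemma is designed to get around. Without superficiality, $(P,\lambda,\omega')$ is not an object of $\fusbunconsf A{LM}$ and the Part II equivalence cannot be invoked, so the reduction fails at its first step; in fact, superficial integrability of $d$ is most naturally obtained as a \emph{consequence} of this lemma (pull back $\omega_{\mathcal{G}}$ along the isomorphism), which makes assuming it at the outset essentially circular.

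The paper's proof takes the route you mention only as an alternative, and the delicate point you correctly isolated is where all the work happens: \cite[Lemmata 6.2.1, 6.2.2]{waldorf10} need only the data $(P,\lambda)$ and the connection $\omega$ on $l^{*}P$, and already produce a fusion-preserving bundle isomorphism $\varphi\maps \trcon_{\mathcal{G}} \to P$; thinness of $\varphi$ is then verified directly. The parallel transport of $\omega_{\mathcal{G}}$ along a thin path $\gamma$ is a surface holonomy of $\mathcal{G}$ over the rank-one map $\exd\gamma$; the curving contribution $\int \tilde\Phi^{*}B$ vanishes because the lift $\tilde\Phi$ through the subduction $\ev_1\maps \px Mx \to M$ is again of rank one, and the remaining comparison is a square of parallel transports along thin paths in $LM$, which commutes because the integrating connection is thin --- superficiality is never used. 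The later portion of your argument (comparing two curvings of the same $\omega$ via Lemma \ref{lem:exactsequence}, tensoring with $\mathcal{I}_{\eta}$, transgressing, and removing $L\eta$ with Proposition \ref{prop:sftrivial}) is sound and mirrors how the paper proves Lemma \ref{lem:essinj}, but that bookkeeping only becomes available once the present lemma (or superficial integrability of $d$) has been established by some such direct argument.
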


\begin{proof}
The data $(P,\lambda)$ and $\omega$ satisfies the assumptions of \cite[Lemmata 6.2.1, 6.2.2]{waldorf10}, so that there exists a well-defined, fusion-preserving smooth bundle isomorphism
\begin{equation*}
\varphi: \mathscr{T}_{\mathcal{G}}^{\nabla} \to P\text{.}
\end{equation*}
Let $\gamma \in PLM$ be a thin path and let $\exd\gamma: [0,1] \times S^1 \to M$  be the associated rank one map.  Let $\mathcal{T}_0$ and $\mathcal{T}_1$ be trivializations of $\gamma(0)^{*}\mathcal{G}$ and $\gamma(1)^{*}\mathcal{G}$, respectively. Note that, by definition of the connection $\omega_{\mathcal{G}}$ on $\trcon_{\mathcal{G}}$, 
\begin{equation}
\label{eq:surfhol}
\tau^{\omega_{\mathcal{G}}}_{\gamma}(\mathcal{T}_0) = \mathcal{T}_1 \cdot \mathscr{A}_{\mathcal{G}}(\exd\gamma,\mathcal{T}_0,\mathcal{T}_1)\text{,}
\end{equation}
where $\mathscr{A}_{\mathcal{G}}(\exd\gamma, \mathcal{T}_0,\mathcal{T}_1)$ is the surface holonomy of $\mathcal{G}$. The crucial part of the proof is the calculation of this surface holonomy. 
Let $e: [0,1] ^2 \to [0,1] \times S^1$ be defined by $e(t,s) :=(t,s-\frac{1}{2})$, and let $\Phi := \exd\gamma \circ e$. In \cite[Section 6.2]{waldorf10} we have constructed a  lift $\tilde\Phi:[0,1] ^2  \to \px Mx$ along the evaluation map. Let $\sigma_u\in PLM$ be defined by $\sigma_u(t) \eq l(\tilde\Phi(t,0),\tilde\Phi(t,1))$, and let $h_0,h_1 \in PLM$ be thin paths from $h_k(0)=\gamma(k)$ to $h_k(1) = \sigma_u(k)$.  
We have \cite[Lemma 6.2.3]{waldorf10}:
\begin{equation}
\label{eq:holglue}
\mathscr{A}_{\mathcal{G}}(\exd\gamma,\mathcal{T}_0,\mathcal{T}_1)^{-1} =   \exp \left (  \int_{[0,1]^2} \tilde\Phi^{*}B \right ) \cdot  \mathrm{PT}(\sigma_u^{*}P,\tau_{h_0}(\varphi(\mathcal{T}_0)),\tau_{h_1}(\varphi(\mathcal{T}_1))) \text{,}
\end{equation}
where the last term is an element in~$A$ defined by
\begin{equation}
\label{eq:pt}
\ptr{\sigma_u}^{\omega}(\tau_{h_0}(\varphi(\mathcal{T}_0))) \cdot  \mathrm{PT}(\sigma_u^{*}P,\tau_{h_0}(\varphi(\mathcal{T}_0)),\tau_{h_1}(\varphi(\mathcal{T}_1))) =\tau_{h_1}(\varphi(\mathcal{T}_1))\text{,}
\end{equation} 
and $\tau_{\sigma_u}^{\omega}$ denotes the parallel transport of $P$ along  $\sigma_u \in PLM$. We have to compute the two terms on the right hand side of \erf{eq:holglue}.

First we show that the integral in \erf{eq:holglue} vanishes.  Indeed, since $\tilde \Phi$ is a lift of the rank one map $\Phi$ through the \emph{subduction} $\ev_1:\px Mx \to M$, it follows that $\tilde\Phi$ is also a rank one map. Hence, $\tilde\Phi^{*}B=0$ \cite[Lemma A.3.2]{waldorf9}.

For the second term, we claim that the diagram
\begin{equation}
\label{eq:assA}
\alxydim{@=1.5cm}{P_{\gamma(0)} \ar[d]_{\ptr{h_0}^{\omega}} \ar[r]^{\ptr{\gamma}^{\omega}} & P_{\gamma(1)} \ar[d]^{\ptr{h_1}^{\omega}} \\ P_{\tilde\Phi(0,0),\tilde\Phi(0,1)} \ar[r]_{\tau^{\omega}_{\sigma_u}}  & P_{\tilde\Phi(1,0),\tilde\Phi(1,1)} }
\end{equation}
of parallel transport maps in $P$ is commutative. Indeed, all four paths are thin, and the connection $\omega$  is thin. The paths $\gamma$, $h_1$ and $h_2$ are thin by assumption, and the path $\sigma_u$ is thin because the map
\begin{equation*}
[0,1]^2 \times [0,1] \to M : (t,s,r) \mapsto \tilde\Phi(t,s)(r)
\end{equation*} 
is of rank one, which can be checked explicitly  using the definition of $\tilde\Phi$ and using  that $\Phi$ is a rank one map.

\putindent{Summarizing, we obtain}
\begin{multline*}
\ptr{h_1}^{\omega}(\ptr{\gamma}^{\omega}(\varphi(\mathcal{T}_0))) \stackrel{\erf{eq:assA}}{=} \ptr{\sigma_u}^{\omega}(\tau^{\omega}_{h_0}(\varphi(\mathcal{T}_0))) \stackrel{\erf{eq:pt}}{=} \tau^{\omega}_{h_1}(\varphi(\mathcal{T}_1)) \cdot \mathrm{PT}(\sigma_u^{*}P,\tau_{h_0}(\varphi(\mathcal{T}_0)),\tau_{h_1}(\varphi(\mathcal{T}_1)))^{-1} \\\stackrel{\erf{eq:holglue}}{=} \tau^{\omega}_{h_1}(\varphi(\mathcal{T}_1)) \cdot \mathscr{A}_{\mathcal{G}}(\exd\gamma,\mathcal{T}_0,\mathcal{T}_1)    \stackrel{\erf{eq:surfhol}}{=}  \tau^{\omega}_{h_1}(\varphi(\ptr{\gamma}^{\omega_{\mathcal{G}}}(\mathcal{T}_0)))\text{.}
\end{multline*}
This shows that $\varphi$ exchanges the parallel transport of the connections $\omega$ and $\omega_{\mathcal{G}}$ along thin paths. 
\end{proof}

\begin{lemma}
\label{lem:essinj}
The functor $h\un_x$ is essentially injective, i.e.
if $P_1$ and $P_2$ are thin fusion bundles over $LX$, and  $h\un_x(P_1)$ and $h\un_x(P_2)$ are isomorphic, then  $P_1$ and $P_2$ are isomorphic. \end{lemma}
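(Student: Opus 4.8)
The plan is to reduce the statement to the setting ``with connections'', where transgression is already known to be an equivalence of categories, and then to absorb the connection correction term produced by Lemma \ref{lem:connmorph} using Proposition \ref{prop:sufitriv}. Throughout one must take the base to be a connected smooth manifold $M$ (the hypothesis $LX$ in the statement should be read this way), since this is what the results invoked below require.

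First I would apply Lemma \ref{lem:transreg}. For $i\in\{1,2\}$ choose a good connection $(B_i,\omega_i)$ on $\un_x(P_i,\lambda_i)$ --- one exists by the remark following the definition of good connections --- and write $\mathcal{G}_i$ for the associated bundle gerbe with connection. Lemma \ref{lem:transreg} then gives fusion-preserving, thin bundle isomorphisms $P_i \cong \sufi(\trcon_{\mathcal{G}_i})$. Applying the functor $h\un_x$ and using the strict commutativity of Proposition \ref{prop:comm} together with \cite[Theorem A]{waldorf10} (so that $\uncon_x \circ \trcon \cong \id$), we obtain
\[
h\un_x(P_i) \;\cong\; h\un_x(\sufi(\trcon_{\mathcal{G}_i})) \;=\; f(\uncon_x(\trcon_{\mathcal{G}_i})) \;\cong\; f(\mathcal{G}_i),
\]
where $f$ forgets bundle gerbe connections. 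Hence the hypothesis $h\un_x(P_1)\cong h\un_x(P_2)$ yields an isomorphism $\mathcal{A}\colon \mathcal{G}_1 \to \mathcal{G}_2$ of the underlying diffeological bundle gerbes.

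Next I would promote $\mathcal{A}$ to a connection-compatible morphism: by Lemma \ref{lem:connmorph} there is a connection on $\mathcal{A}$ and a $2$-form $\eta \in \Omega^2_{\mathfrak{a}}(M)$ such that $\mathcal{A}\colon \mathcal{G}_1 \to \mathcal{G}_2 \otimes \mathcal{I}_\eta$ is connection-preserving. Applying the monoidal functor $\trcon$ and then $\sufi$, and invoking Proposition \ref{prop:sufitriv} to identify $(\sufi\circ\trcon)(\mathcal{I}_\eta)$ with the trivial thin fusion bundle $\trivlin$ (tensoring with which is the identity), we obtain a chain of isomorphisms of thin fusion bundles
\[
P_1 \;\cong\; \sufi(\trcon_{\mathcal{G}_1}) \;\cong\; \sufi(\trcon_{\mathcal{G}_2}) \otimes (\sufi\circ\trcon)(\mathcal{I}_\eta) \;\cong\; \sufi(\trcon_{\mathcal{G}_2}) \;\cong\; P_2 .
\]
In particular $P_1$ and $P_2$ are isomorphic in $\fusbunth A{LM}$, hence a fortiori in the homotopy category $h\fusbunth A{LM}$, which is the assertion.

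The formal part --- that $\trcon$ and $\sufi$ are monoidal, that the displayed isomorphisms compose, and that the forgetful functor $f$ fits into Proposition \ref{prop:comm} on the nose --- is routine and already recorded in the cited results. The one genuinely delicate point, and the reason Proposition \ref{prop:sufitriv} was isolated in advance, is that the gerbe isomorphism $\mathcal{A}$ extracted from $f(\mathcal{G}_1)\cong f(\mathcal{G}_2)$ need not respect the (good) connections, so transgressing it does not directly produce $\trcon_{\mathcal{G}_1}\cong\trcon_{\mathcal{G}_2}$ but only an isomorphism twisted by $\trcon(\mathcal{I}_\eta)\cong\trivlin_{-L\eta}$, a genuinely nontrivial bundle with connection; it is precisely the passage through $\sufi$, which collapses the superficial connection $-L\eta$ to the trivial thin structure (Proposition \ref{prop:sftrivial}), that annihilates this twist. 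I expect essentially all of the difficulty of the lemma to be concentrated here, with the deeper input being Lemma \ref{lem:transreg} itself rather than the present bookkeeping.
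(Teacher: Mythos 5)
Your proposal is correct and follows essentially the same route as the paper: choose good connections to obtain $\mathcal{G}_1,\mathcal{G}_2$, use Lemma \ref{lem:connmorph} to get a connection-preserving isomorphism $\mathcal{G}_1 \to \mathcal{G}_2 \otimes \mathcal{I}_\eta$, transgress it, apply $\sufi$, and conclude with Lemma \ref{lem:transreg} and Proposition \ref{prop:sftrivial} (your use of Proposition \ref{prop:sufitriv} is just its packaged form). The only cosmetic difference is your detour through Proposition \ref{prop:comm} and \cite[Theorem A]{waldorf10} to produce the isomorphism of underlying gerbes, which is unnecessary since by construction the gerbe underlying $\mathcal{G}_i$ \emph{is} $\un_x(P_i)$, so the hypothesis applies directly.
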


\begin{proof}
We choose good connections on $\un_x(P_1)$ and $\un_x(P_2)$, and denote the corresponding bundle gerbes with connections by $\mathcal{G}_1$ and $\mathcal{G}_2$, respectively.  Lemma \ref{lem:connmorph} and the assumption ensure the existence of a 2-form $\eta \in \Omega^2_{\mathfrak{a}}(M)$ and a connection-preserving 1-isomorphism
\begin{equation*}
\mathcal{A}: \mathcal{G}_1 \to \mathcal{G}_2 \otimes \mathcal{I}_{\eta}\text{.}
\end{equation*} 
Since transgression is a monoidal functor, $\mathcal{A}$ transgresses to an isomorphism
\begin{equation*}
\trcon_{\mathcal{A}}: \trcon_{\mathcal{G}_1} \to \trcon_{\mathcal{G}_2} \otimes \trivlin_{L\eta}
\end{equation*} 
of fusion bundles with connection.
Applying the functor $\sufi$ and using the isomorphism of Lemma \ref{lem:transreg} as well as Proposition \ref{prop:sftrivial}, we get the claimed isomorphism $P_1 \cong P_2$. 
\end{proof}

Now we can conclude with
the following.
\begin{proposition}
\label{prop:fullf}
The functor $h\un_x$ is full and faithful. 
\end{proposition}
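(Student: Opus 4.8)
The plan is to show that for every pair of thin fusion bundles $P_1,P_2$ over $LM$ the map
\begin{equation*}
h\un_x\maps \hom(P_1,P_2) \to \hom(h\un_x(P_1),h\un_x(P_2))
\end{equation*}
is a bijection, by comparing the torsor structures present on source and target. Throughout, $M$ is a connected smooth manifold, as needed both for $h\un_x$ to be defined (Proposition \ref{prop:regfact}) and for the regression isomorphism of \cite[Theorem B]{waldorf9} that will be invoked below.

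First I would dispose of the case $\hom(P_1,P_2) = \emptyset$. If the target $\hom(h\un_x(P_1),h\un_x(P_2))$ were nonempty, then $h\un_x(P_1)$ and $h\un_x(P_2)$ would be isomorphic, and Lemma \ref{lem:essinj} would give $P_1 \cong P_2$, contradicting emptiness of $\hom(P_1,P_2)$. Hence the target is also empty, and $h\un_x$ is (vacuously) a bijection in this case. This is the only place where essential injectivity is used to establish fullness; faithfulness in the empty case is automatic.

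Now suppose $\hom(P_1,P_2) \neq \emptyset$ and fix $\varphi_0 \in \hom(P_1,P_2)$. By Proposition \ref{prop:homtorsor} the set $\hom(P_1,P_2)$ is a torsor over $h\fus A{\mathcal{L}M}$, so every morphism $P_1 \to P_2$ is uniquely of the form $\varphi_0 \cdot f$ with $f \in h\fus A{\mathcal{L}M}$. On the other side, $\hom(h\un_x(P_1),h\un_x(P_2))$ contains $h\un_x(\varphi_0)$, hence is nonempty; being a Hom-set in $\hc 1\diffgrb AM$, it is a torsor over $\hc 0\diffbun AM$ under the action of bundles on $1$-isomorphisms, so every element is uniquely $h\un_x(\varphi_0) \otimes L$ with $L \in \hc 0\diffbun AM$. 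Proposition \ref{prop:equiv} says precisely that $h\un_x(\varphi_0 \cdot f) = h\un_x(\varphi_0) \otimes \un_x(f)$, i.e.\ $h\un_x$ intertwines the two simply transitive actions along the group homomorphism $\un_x\maps h\fus A{\mathcal{L}M} \to \hc 0\diffbun AM$ (regression for bundles), which is an isomorphism by \cite[Theorem B]{waldorf9}. An equivariant map of torsors along a group isomorphism that sends a chosen base point to a base point is a bijection; therefore $h\un_x$ restricts to a bijection between the two Hom-sets. Combined with the empty case, this proves that $h\un_x$ is full and faithful.

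I do not expect a genuine obstacle at this stage: the substantial work has already been done in the essential injectivity of Lemma \ref{lem:essinj} and in the equivariance of Proposition \ref{prop:equiv}, and what remains is a formal comparison of torsors. The two points requiring a little care are using Lemma \ref{lem:essinj} to make the property ``nonempty'' symmetric on the two sides, and recording the standard fact that the Hom-category between two bundle gerbes is a module over $\bun AM$ in such a way that the induced Hom-set in $\hc 1\diffgrb AM$ is a $\hc 0\diffbun AM$-torsor, together with the identification $h\fus A{\mathcal{L}M} \cong \hc 0\diffbun AM$ furnished by \cite[Theorem B]{waldorf9}.
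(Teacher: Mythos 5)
Your proposal is correct and matches the paper's own argument essentially step for step: the empty case via Lemma \ref{lem:essinj}, and the nonempty case by comparing the torsor structures of Proposition \ref{prop:homtorsor} and of Hom-sets in $\hc 1\diffgrb AM$, using the equivariance of Proposition \ref{prop:equiv} and the group isomorphism from Part I (your citation of Theorem B there is in fact the apt one). No further comment is needed.
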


\begin{proof}
Fix two thin fusion bundles $P_1$ and $P_2$  over $LM$.  Lemma \ref{lem:essinj} shows that the Hom-set $\hom(P_1,P_2)$ is empty if and only if the Hom-set $\hom(h\un_x(P_1),h\un_x(P_2))$ is empty. In the case that both are non-empty, the first is a torsor over the group $\fushom A{\mathcal{L}X}$ by Proposition \ref{prop:homtorsor}, and the second is a torsor over the group $\hc 0 \diffbun AX$ \cite[Lemma 3.1.2 (iii)]{waldorf10}. Both groups are isomorphic by \cite[Theorem A]{waldorf9}, and $\un_x$ is equivariant by Proposition \ref{prop:equiv}. Thus, $\un_x$ is an equivariant map between torsors, and so a bijection.
\end{proof}

\begin{appendix}

\setsecnumdepth{1}

\section{An Exact Sequence for Subductions over Manifolds}

\label{sec:app}

The following lemma was originally proved by Murray \cite{murray} for a surjective submersion $\pi:Y \to M$ between smooth manifolds. It can be generalized to a diffeological space $Y$ with a subduction $\pi:Y \to M$ to a smooth manifold $M$; see  \cite[Definition A.2.1]{waldorf9} for the definition of a subduction. 

We denote for $p \geq 1$ and $1 \leq i \leq p$ by $\partial_i: Y^{[p]} \to Y^{[p-1]}$ the map that omits the $i$-th factor. For $q\geq 0$, we get a linear map
\begin{equation}
\label{eq:delta}
\delta: \Omega^q(Y^{[p]}) \to \Omega^q(Y^{[p+1]}): \omega \mapsto \sum_{i=0}^{p+1} (-1)^{p} \partial_i^{*}\omega\text{.}
\end{equation}

\begin{lemma}
\label{lem:exactsequence}
For $\pi:Y \to M$ a subduction between a diffeological space $Y$  and a smooth manifold $M$, and $q \in \N$ , the sequence
\begin{equation*}
\alxydim{}{0 \ar[r] & \Omega^q(M) \ar[r]^{\pi^{*}} & \Omega^q(Y) \ar[r]^{\delta} & \Omega^q(Y^{[2]}) \ar[r]^{\delta} & \Omega^q(Y^{[3]}) \ar[r] & \hdots}
\end{equation*}
is exact. 
\end{lemma}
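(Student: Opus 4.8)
The plan is to adapt Murray's partition-of-unity argument \cite{murray} to the diffeological setting, the key point being that the only place where submersivity enters Murray's proof is through the existence of \emph{local smooth sections}, and this is precisely the defining feature of a subduction over a \emph{manifold}: since $M$ is covered by opens diffeomorphic to open subsets of $\R^n$, which are their own plots, the subduction $\pi$ admits smooth sections $s_\alpha \maps U_\alpha \to Y$ over the members of some open cover $\{U_\alpha\}$ of $M$. As $M$ is a manifold we may in addition pick a locally finite partition of unity $\{\phi_\alpha\}$ subordinate to $\{U_\alpha\}$. Neither ingredient requires $Y$ to be anything but a diffeological space.

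From this I would build a contracting homotopy exactly as in \cite{murray}. Set $Y^{[0]} := M$ and, for $p \geq 1$, let $\mathrm{pr}_1 \maps Y^{[p-1]} \to Y$ be the first projection (with $\mathrm{pr}_1 := \id_M$ for $p=1$) and $Y^{[p-1]}|_{U_\alpha} := (\pi \circ \mathrm{pr}_1)^{-1}(U_\alpha)$ the D-open part over $U_\alpha$. The map
\[
\hat s_\alpha \maps Y^{[p-1]}|_{U_\alpha} \to Y^{[p]} \maps (y_1,\dots,y_{p-1}) \mapsto (s_\alpha(\pi(y_1)),y_1,\dots,y_{p-1})
\]
is smooth by the universal property of the fibre product, and I define
\[
\kappa \maps \Omega^q(Y^{[p]}) \to \Omega^q(Y^{[p-1]}) \maps \omega \mapsto \sum_\alpha (\mathrm{pr}_1^{*}\pi^{*}\phi_\alpha)\cdot \hat s_\alpha^{*}\omega\text{,}
\]
each summand, a priori living on $Y^{[p-1]}|_{U_\alpha}$, being extended by zero over $Y^{[p-1]}$; this uses that $\Omega^q$ is a sheaf for D-open covers and that $\mathrm{pr}_1^{*}\pi^{*}\phi_\alpha$ vanishes on the D-open complement of $(\pi \circ \mathrm{pr}_1)^{-1}(\mathrm{supp}\,\phi_\alpha)$, while local finiteness of $\{\phi_\alpha\}$ (pulled back) makes the sum locally finite. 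At the bottom end this reads $\kappa \maps \Omega^q(Y) \to \Omega^q(M)$ sending $\omega$ to $\sum_\alpha \phi_\alpha\cdot s_\alpha^{*}\omega$, with $\delta \maps \Omega^q(M) \to \Omega^q(Y)$ equal to $\pi^{*}$.

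The key computation is then the homotopy identity $\delta \circ \kappa + \kappa \circ \delta = \id$ on $\Omega^q(Y^{[p]})$ for every $p \geq 0$. This is verified just as in \cite{murray}: expand $\delta$ into its face maps $\partial_i$, use the relations $\partial_0 \circ \hat s_\alpha = \id$ and $\partial_i \circ \hat s_\alpha = \hat s_\alpha \circ \partial_{i-1}$ for $i \geq 1$ over the relevant open sets, and use $\sum_\alpha \phi_\alpha = 1$; I would run through this once, since nothing in it distinguishes the diffeological from the smooth case. Granting the identity, exactness at $\Omega^q(M)$ (that is, injectivity of $\pi^{*}$) is immediate since $\kappa \circ \pi^{*} = \id$ by $\sum_\alpha \phi_\alpha = 1$, and exactness at $\Omega^q(Y^{[p]})$ for $p \geq 1$ follows because $\delta\omega = 0$ forces $\omega = \delta(\kappa\omega)$.

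The only genuinely non-formal point — and hence the main obstacle — is to confirm that every operation above is legitimate in the category of diffeological spaces: that the iterated fibre products $Y^{[p]}$ and their D-open subspaces carry well-behaved de Rham complexes, that pullback of forms along the smooth maps $\hat s_\alpha$ and $\partial_i$ is defined, that forms supported away from a D-closed set extend by zero (the sheaf property), and that multiplication by the smooth functions $\mathrm{pr}_1^{*}\pi^{*}\phi_\alpha$ is allowed. These are all standard features of differential forms on diffeological spaces; see Appendix A of \cite{waldorf9}. I would therefore structure the write-up as: recall these diffeological facts, construct $\kappa$, verify $\delta\kappa + \kappa\delta = \id$, and conclude — so that in the end the proof is Murray's proof with the diffeological bookkeeping made explicit.
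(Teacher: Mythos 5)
Your proof is correct and follows essentially the same route as the paper: both adapt Murray's argument by using that a subduction over a manifold admits local smooth sections and then invoking a partition of unity on $M$. The only (cosmetic) difference is organizational — you package the sections and partition of unity into one global homotopy operator $\kappa$ with $\delta\kappa+\kappa\delta=\id$, whereas the paper first contracts using a global section and then glues the local primitives $\rho_\alpha$ by the partition of unity.
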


\begin{proof}
We rewrite Murray's original proof in such a way that it passes to the diffeological setting. It is clear that $\delta \circ \delta = 0$. Let $\omega \in \Omega^q(Y^{[p+1]})$ such that $\delta\omega = 0$. We have to construct $\rho \in \Omega^q(Y^{[p]})$ such that $\delta\rho = \omega$. First, we assume that $\pi:Y \to M$ has a global smooth section $s: M \to Y$. Such a section defines a smooth map 
\begin{equation*}
s_p: Y^{[p]} \to Y^{[p+1]}: (y_1,..,y_p) \mapsto (y_1,..,y_p,s(x))\text{,}
\end{equation*}
where $x := \pi(y_i)$ for all $i$. Then, we define $\rho := (-1)^p s_p^{*}\omega$. The identity $\delta\rho = \omega$  follows from the assumption $\delta\omega = 0$ and the relations between the maps $s_p$ and the projections.

In case $\pi:Y \to M$ has no global section, let $U_{\alpha}$ be a cover of $M$ by open sets that admit local smooth sections $s_\alpha: U_{\alpha} \to Y$. Such covers exist because $\pi$ is a subduction \cite[Lemma A.2.2]{waldorf9}. Setting  $Y_{\alpha} := \pi^{-1}(U_{\alpha})$   one is over each open set $U_{\alpha}$ in the case discussed above, and a standard partition of unity argument completes the proof. 
\end{proof}

\setsecnumdepth{1}

\section{Additional Remark about Thin Structures}

\label{sec:6}

In this section I would like to mention a surprising consequence of Theorem \ref{th:mainA} concerning the notion of a thin structure, namely the fact that a thin structure is -- up to isomorphism -- no additional structure at all.

\begin{proposition}
\label{prop:superficialprop}
If $(P,\lambda)$ is a fusion bundle, and $d_1$, $d_2$ are compatible, symmetrizing thin structures, then there exists a fusion-preserving, thin bundle isomorphism 
\begin{equation*}
\varphi \maps (P,\lambda,d_1) \to (P,\lambda,d_2)\text{.}
\end{equation*}
\end{proposition}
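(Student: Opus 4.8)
The plan is to obtain Proposition \ref{prop:superficialprop} as a direct corollary of the essential injectivity of regression (Lemma \ref{lem:essinj}). The one observation that makes this work is that, on the level of objects, the regression functor does not depend on the thin structure at all: by construction of $h\un_x$ (see diagram \erf{eq:funhomfact2} in Proposition \ref{prop:regfact}), the functor $h\un_x$ equals $\un_x$ precomposed with the forgetful functor $\fusbunth A{LM}\to\fusbun A{LM}$, so that for any thin fusion bundle $(P,\lambda,d)$ one has $h\un_x(P,\lambda,d)=\un_x(P,\lambda)$, independently of $d$.

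Concretely I would proceed as follows. Regard $P_1:=(P,\lambda,d_1)$ and $P_2:=(P,\lambda,d_2)$ as objects of $h\fusbunth A{LM}$. By the observation above, $h\un_x(P_1)=\un_x(P,\lambda)=h\un_x(P_2)$ in $\hc 1 \diffgrb A M$; in particular $h\un_x(P_1)$ and $h\un_x(P_2)$ are isomorphic. Lemma \ref{lem:essinj} then gives that $P_1$ and $P_2$ are isomorphic, and its proof in fact exhibits an honest fusion-preserving, thin bundle isomorphism $\varphi \maps (P,\lambda,d_1) \to (P,\lambda,d_2)$ rather than merely a homotopy class of such. If one prefers to spell out the argument in the present situation instead of quoting Lemma \ref{lem:essinj}: choose good connections for $(P,\lambda,d_1)$ and for $(P,\lambda,d_2)$, both of which are connections on the one bundle gerbe $\un_x(P,\lambda)$, and denote the resulting gerbes-with-connection by $\mathcal{G}_1$ and $\mathcal{G}_2$; apply Lemma \ref{lem:connmorph} to the identity $\un_x(P,\lambda)\to\un_x(P,\lambda)$ to obtain a $2$-form $\eta\in\Omega^2_{\mathfrak{a}}(M)$ together with a connection-preserving isomorphism $\mathcal{G}_1\to\mathcal{G}_2\otimes\mathcal{I}_{\eta}$; transgress it, apply the functor $\sufi$, and combine the outcome with the isomorphisms of Lemma \ref{lem:transreg} and with Proposition \ref{prop:sftrivial} (which identifies $\sufi(\trivlin_{L\eta})$ with the trivial thin fusion bundle) to produce $\varphi$.

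I do not expect a genuine obstacle here: all the substance is already contained in Section \ref{sec:proof}, and the single point that must be made explicit is the object-level identity $h\un_x(P,\lambda,d_1)=h\un_x(P,\lambda,d_2)$ above, which is precisely what licenses the appeal to Lemma \ref{lem:essinj}. The proposition merely records the conceptual consequence: a compatible, symmetrizing thin structure on a fixed fusion bundle is unique up to fusion-preserving, thin isomorphism, i.e. the forgetful functor $\fusbunth A{LM}\to\fusbun A{LM}$ is injective on isomorphism classes.
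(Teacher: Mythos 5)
Your proposal is correct and is essentially the paper's own argument: the paper proves Proposition \ref{prop:superficialprop} in one line by observing that $h\un_x$ forgets the thin structure on objects but is an equivalence of categories, which is exactly your appeal to $h\un_x(P,\lambda,d_1)=\un_x(P,\lambda)=h\un_x(P,\lambda,d_2)$ together with essential injectivity (Lemma \ref{lem:essinj}). Your spelled-out variant (good connections, Lemma \ref{lem:connmorph} applied to the identity, transgression, $\sufi$, Lemma \ref{lem:transreg}, Proposition \ref{prop:sftrivial}) coincides with the explicit construction of $\varphi$ the paper gives immediately after its proof, so there is nothing to add.
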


\begin{proof}
The regression functor $h\un_x: \fusbunth A{LM} \to \hc 1 \diffgrb AM$ forgets the thin structure, but is an equivalence of categories. 
\end{proof}

Below we construct the isomorphism $\varphi$ explicitly. 
Before that, let me remark that in spite of the statement of Proposition \ref{prop:superficialprop} thin structures are indispensable. First, the \emph{existence} of a thin structure is  a condition. In order to see that, recall that every almost-thin bundle over $LM$ is $\diff^{+}(S^1)$-equivariant, in particular $S^1$-equivariant. In other words, any fusion bundle over $LM$ that is not equivariant does not admit an almost-thin structure. Second, the definition of the Hom-sets in $\fusbunth A{LM}$ uses \emph{fixed  thin structures} on the bundles; the mere existence of thin structures would lead to different Hom-sets and would spoil the equivalence of   Theorem \ref{th:mainA}.

Let us construct the isomorphism $\varphi$ of Proposition \ref{prop:superficialprop} explicitly. We have to employ that the functor $h\un_x$ is essentially injective. Following the proof of Lemma \ref{lem:essinj} we choose compatible, symmetrizing thin connections $\omega_1$, $\omega_2$ integrating the thin structures $d_1$, $d_2$, respectively. Further, we choose good connections  on $\un_x(P,\lambda)$, one with respect to $\omega_1$ and another one with respect to $\omega_2$, and denote the corresponding bundle gerbes with connections by $\mathcal{G}_1$ and $\mathcal{G}_2$, respectively.  From Lemma \ref{lem:transreg} we get  fusion-preserving thin bundle isomorphisms
\begin{equation*}
\varphi_1: \sufi(\trcon_{\mathcal{G}_1}) \to (P,\lambda,d_1)
\quand
\varphi_2:  \sufi(\trcon_{\mathcal{G}_2}) \to (P,\lambda,d_2) \text{.}
\end{equation*}
Since $\mathcal{G}_1 = \mathcal{G}_2$ as bundle gerbes \emph{without} connections, there exist a 2-form $\rho \in \Omega^2_{\mathfrak{a}}(M)$ and a connection $\eta$ on the identity morphism $\id: \mathcal{G}_1 \to \mathcal{G}_2$ such that
\begin{equation}
\label{eq:idiso}
\id_{\eta}:  \mathcal{G}_1 \to \mathcal{G}_2 \otimes \mathcal{I}_{\rho}
\end{equation}
is a connection-preserving 1-isomorphism; see Lemma \ref{lem:connmorph}. In more detail, let $\varepsilon \in \Omega_{\mathfrak{a}}^1(LM)$ be the difference between the connections $\omega_1$ and $\omega_2$, i.e. $\omega_2 = \omega_1 + \varepsilon$.   That $\omega_1$ and $\omega_2$ are compatible with the same fusion product implies $\delta(l^{*}\varepsilon) = 0$, where $\delta$ is the operator \erf{eq:delta} for the subduction $\ev_1: \px Mx \to M$. By Lemma \ref{lem:exactsequence} there exists $\eta \in \Omega^1_{\mathfrak{a}}(\px Mx)$ such that $l^{*}\varepsilon = \delta\eta$, and any choice of $\eta$  is a connection on $\id$. 
All together, we obtain a fusion-preserving, thin bundle isomorphism
\begin{equation*}
\alxydim{@C=1.7cm}{(P,\lambda,\omega_1) \ar[r]^-{\varphi_1^{-1}} & \trcon_{\mathcal{G}_1} \ar[r]^-{\trcon_{\id_{\eta}}} & \trcon_{\mathcal{G}_2} \otimes \trivlin_{-L\rho} \ar[r]^-{\varphi_2 \otimes \id} &  (P,\lambda,\omega_2) \otimes \trivlin_{-L\rho}\text{.}}
\end{equation*}
Under the functor $\sufi$ it yields with Proposition \ref{prop:sufitriv} the claimed isomorphism $\varphi$.

Now we will be more explicit. The bundle morphism $\varphi$ differs from the identity $\id\maps P \to P$ by the action of a smooth map $f:LM \to A$, i.e. $\id \cdot f = \varphi$. Since both $\id$ and $\varphi$ preserve the  fusion product $\lambda$, the map $f$ satisfies the fusion condition. (It is not a fusion map because it is not constant on thin homotopy classes of loops.)
We shall compute the map $f$. 

For a loop $\beta\in LM$ let $\mathcal{T}$ be a trivialization of $\beta^{*}\mathcal{G}_1$, consisting of a principal $A$-bundle $T$ with connection over $Z:= S^1 \lli{\beta}\times_{\ev_1} \px Mx$, and of a connection-preserving isomorphism $\tau: l^{*}P \otimes \zeta_2^{*}T \to \zeta_1^{*}T$. Let $\mathcal{T}' =(T',\tau')$ be the trivialization of $\beta^{*}\mathcal{G}_2$ given by $T' := T \otimes \trivlin_{\mathrm{pr}^{*}\eta}$ and $\tau' = \tau$, where $\mathrm{pr}:Z \to \px Mx$ and $\tau'$ is still connection-preserving because of $l^{*}\varepsilon = \delta\eta$.  We claim that
\begin{equation*}
\trcon_{\id_{\eta}}(\mathcal{T}) = \mathcal{T}' \otimes (\beta,1) \in \trcon_{\mathcal{G}_2} \otimes \trivlin_{-L\rho}\text{.}
\end{equation*}
This means
\begin{equation*}
\varphi_1(\mathcal{T}) \cdot f(\beta) = \varphi_2(\mathcal{T}')\text{.}
\end{equation*}

Now we recall the definition of $\varphi_1(\mathcal{T})$ and $\varphi_2(\mathcal{T}')$ from \cite[Section 6.2]{waldorf10}. We have to choose a path $\gamma \in \px Mx$ with $\gamma(1)=\beta(0)$, together with two paths $\gamma_1$, $\gamma_2$; further an element $q \in T_1|_{\frac{1}{2},\gamma_1 \pcomp \gamma}$. Let $\alpha_i \in PZ$ be the canonical path with $\alpha_i(0)=(\id \pcomp \gamma,0)$ and $\alpha_i(1)=(\gamma_i \pcomp \gamma,\frac{1}{2})$, and let $\alpha := \alpha_2 \pcomp \prev{\alpha_1}$. Then, elements $p_1,p_2 \in P_{l(\gamma_1 \pcomp \gamma,\gamma_2 \pcomp \gamma)}$ are determined by
\begin{equation*}
\tau(p_1 \otimes \tau_{\alpha}(q))=q
\quand
\tau'(p_2 \otimes \tau'_{\alpha}(q))=q\text{,}
\end{equation*} 
where $\tau_{\alpha}$ and $\tau_{\alpha}'$ denote the parallel transport in $T$ and $T'$, respectively. We obtain 
\begin{equation*}
p_2 = p_1 \cdot \exp \left ( \int_{\tilde\alpha} \eta \right )\text{,}
\end{equation*}
where $\alpha' := \mathrm{pr}(\alpha) \in P\px Mx$. 
Let $h \in PLX$ be a thin homotopy between $l(\gamma_1 \pcomp \gamma, \gamma_2 \pcomp \gamma)$ and $\beta$. Then, we have by definition
\begin{equation*}
\varphi_1(\mathcal{T}) = \tau^{\omega_1}_{h}(p_1)
\quand
\varphi_2(\mathcal{T}') = \tau^{\omega_2}_h(p_2)\text{.}
\end{equation*}
Summarizing, we get
\begin{equation*}
f(\beta) := \exp \left (  \int_{\tilde \alpha}\eta + \int_{h} \varepsilon \right )\text{.}
\end{equation*}
One may now  double-check independently  that $f$ is well-defined and smooth, and that the bundle morphism $\varphi = \id \cdot f$ is fusion-preserving and thin. These checks only employ the fact that $\varepsilon$ is (by definition) a compatible, symmetrizing and thin connection on the trivial fusion bundle. 

\begin{remark}
At first sight, it may seem that $f$ is homotopic to the constant map $1$, by scaling the forms $\varepsilon$ and $\eta$ to zero. However, the proof that the above definition of $f$ is independent of the choices of $\gamma$ and $h$ uses that $\epsilon$ is a \emph{thin} and \emph{symmetrizing} connection on the trivial fusion bundle, and  both properties are not scaling invariant. Thus, $f$ is not homotopic to the constant map.  
\end{remark}

\end{appendix}

\tocsection{Table of Notations}

For the convenience of the reader we include a table of notations from all parts of this paper series. The roman letters refer to \cite{waldorf9}, \cite{waldorf10}, and the present paper (III). 

\newcommand{\notation}[3]{
  \noindent
  \begin{minipage}[t]{0.20\textwidth}#1\end{minipage}
  \begin{minipage}[t]{0.59\textwidth}#2\vspace{0.3cm}\end{minipage}\hfill
  \begin{minipage}[t]{0.2\textwidth}\begin{flushright}#3\end{flushright}\end{minipage}}

\notation{$PX$  ($\pt X$)}{The diffeological space of (thin homotopy classes of) paths with sitting instants}{I, Section 2.1}

\notation{$\px Xx$ ($\ptx Xx$)}{The diffeological space of (thin homotopy classes of) paths with sitting instants starting at a point $x\in X$}{I, Section 4.1}

\notation{$BX$ ($\mathcal{B}X$)}{The diffeological space of (thin homotopy classes of) bigons in $X$}{II, Appendix A}

\notation{$LX$}{The free loop space of $X$, considered as a diffeological space}{I, Section 2.2}

\notation{$\mathcal{L}X$}{The thin loop space of $X$}{I, Section 2.2}

\notation{$\mathfrak{L}X$}{The thin loop stack of $X$}{III, Section \ref{sec:thin}}

\notation{$\fus A{\mathcal{L}X}$}{The group of fusion maps on $\mathcal{L}X$}{I, Section 2.2}

\notation{$h\fus A{\mathcal{L}X}$}{The group of homotopy classes of fusion maps on $\mathcal{L}X$}{I, Section 2.2}

\notation{$\fuslc A{\mathcal{L}X}$}{The group of locally constant fusion maps on $\mathcal{L}X$}{I, Section 4.2}

\notation{$\bun AX$}{The groupoid of (diffeological) principal $A$-bundles over $X$}{I, Section 3.1}

\notation{$\buncon AX$}{The category of (diffeological) principal $A$-bundles with connection over $X$}{I, Section 3.2}

\notation{$\bunth A{LX}$}{The category of thin bundles over $LX$}{III, Section \ref{sec:thin}}

\notation{$\fusbun A {LX}$}{The category of fusion bundles over $LX$}{II, Section 2.1}

\notation{$\fusbunth A {LX}$}{The category of thin fusion bundles over $LX$}{III, Section 3.2}

\notation{$h\fusbunth A {LX}$}{The homotopy category of thin fusion bundles over $LX$}{III, Section 3.3}

\notation{$\fusbunconsf A {LX}$}{The category of fusion bundles with superficial connections over $LX$}{II, Section 2.2}

\notation{$\diffgrb AX$}{The 2-category of diffeological $A$-bundles gerbes over $X$}{II, Section 3.1}

\notation{$\diffgrbcon AX$}{The 2-category of diffeological $A$-bundles gerbes with connection over $X$}{II, Section 3.1}

\notation{$\hc 0\mathcal{C}$}{The set of isomorphism classes of objects of a (2-) category $\mathcal{C}$.}{}

\notation{$\hc 1\mathcal{C}$}{The category of obtained from a 2-category $\mathcal{C}$ by identifying 2-isomorphic 1-morphisms. }{}

\kobib{../../bibliothek/tex}

\end{document}